\newtheorem{theorem}{Theorem}[section]
\newtheorem{lemma}[theorem]{Lemma}
\newtheorem{corollary}[theorem]{Corollary}
\newtheorem{proposition}[theorem]{Proposition}
\newtheorem{theoremletter}{Theorem}
\newtheorem{conjecture}[theorem]{Conjecture}
 \theoremstyle{definition}
 \newtheorem{definition}[theorem]{Definition}
 \newtheorem{example}[theorem]{Example}
\numberwithin{equation}{section}
\newcommand {\N}{\mathbb{N}} 
\newcommand {\Z}{\mathbb{Z}}
\newcommand {\C}{\mathbb{C}}
\newcommand{\FF}{\mathcal{F}}
\newcommand{\GG}{\mathcal{G}}
\newcommand\sbullet[1][.5]{\mathbin{\vcenter{\hbox{\scalebox{#1}{$\bullet$}}}}}
\DeclareMathOperator{\mdim}{mdim}
\DeclareMathOperator{\Ker}{Ker}
\DeclareMathOperator{\im}{Im}
\DeclareMathOperator{\Id}{Id}
\begin{document}

\title[Symbolic group varieties and dual surjunctivity]{On symbolic group varieties and dual surjunctivity}
\author[Xuan Kien Phung]{Xuan Kien Phung}
\address{Département de mathématiques, Université du Québec à Montréal, Case postale 8888, Succursale Centre-ville, Montréal, QC, H3C 3P8, Canada}
\email{phungxuankien1@gmail.com}
\subjclass[2010]{14A10, 14L10, 37B10, 37B15, 43A07, 68Q80}
\keywords{Garden of Eden theorem, sofic group, amenable group, surjunctivity, pre-injectivity, post-surjectivity,  cellular automata, algebraic group} 
\begin{abstract}
Let $G$ be a group. Let $X$ be an algebraic group over an algebraically closed field $K$. Denote by $A=X(K)$ the set of rational points of $X$. We study algebraic group cellular automata $\tau \colon A^G \to A^G$ whose local defining map is induced by a homomorphism of algebraic groups $X^M \to X$ where $M$ is a finite memory. When $G$ is sofic and $K$ is uncountable, we show that if $\tau$ is post-surjective then it is weakly pre-injective. Our result extends the dual version of Gottschalk's Conjecture for finite alphabets proposed by Capobianco, Kari, and Taati. When $G$ is amenable, we prove that if $\tau$ is surjective then it is weakly pre-injective, and conversely, if $\tau$ is pre-injective then it is surjective. Hence, we obtain a complete answer to a question of Gromov on the Garden of Eden theorem in the case of algebraic group cellular automata.
\end{abstract} 
\date{\today}
\maketitle

\setcounter{tocdepth}{1}

\section{Introduction} 
We recall basic notations in symbolic dynamics. 
Fix a set $A$ called the \emph{alphabet},  and a group  $G$, the \emph{universe}.
A \emph{configuration} $c \in A^G$ is a map $c \colon G \to A$. 
The Bernoulli shift action $G \times A^G \to A^G$ is defined by $(g,c) \mapsto g c$, 
where $(gc)(h) \coloneqq  c(g^{-1}h)$ for  $g,h \in G$ and $c \in A^G$. 
For $\Omega \subset G$ and $c \in A^G$, the \emph{restriction} 
$c\vert_\Omega \in A^\Omega$ is given by $c\vert_\Omega(g) \coloneqq  c(g)$ for all $g \in \Omega$. 
\par
Following von Neumann \cite{neumann}, a \emph{cellular automaton} over  the group $G$ and the alphabet $A$ is a map
$\tau \colon A^G \to A^G$ admitting a finite \emph{memory set} $M \subset G$
and a \emph{local defining map} $\mu \colon A^M \to A$ such that 
\begin{equation*} 
\label{e:local-property}
(\tau(c))(g) = \mu((g^{-1} c )\vert_M)  \quad  \text{for all } c \in A^G \text{ and } g \in G.
\end{equation*} 
 \par
Two configurations $c,d  \in A^G$ are \emph{asymptotic} if $c\vert_{G \setminus E}=d\vert_{G \setminus E}$ for some finite subset $E \subset G$. 
Let $\tau \colon A^G \to A^G$ be a cellular automaton. Then $\tau$ is  \emph{pre-injective} if $\tau(c) = \tau(d)$ implies $c= d$ whenever $c, d \in A^G$ are asymptotic. We say that $\tau$ is \emph{post-surjective} if for every $x, y \in A^G$ with $y$ asymptotic to $\tau (x)$, we can find $z \in A^G$ asymptotic to $x$ such that  $ \tau(z)=y$.
\par 
The cellular automaton $\tau \colon A^G \to A^G$ is said to be \emph{linear} if $A$ is a finite-dimensional vector space and $\tau$ is a linear map. 
 \par 
The important Gottschalk's conjecture \cite{gottschalk} asserts that over any universe, an injective cellular automaton with finite alphabet must be surjective. 
\par 
The conjecture was shown to hold over sofic groups (cf. \cite{gromov-esav}, \cite{weiss-sgds}, see also \cite{csc-sofic-linear}, \cite{cscp-alg-ca},  \cite{phung-2020}) while no examples of non-sofic groups are known in the litterature. 
The dual version of Gottschalk's conjecture was introduced recently by 
Capobianco, Kari, and Taati in \cite{kari-post-surjective} and states the following: 

\begin{conjecture}
\label{c:dual-gottschalk-conjecture}
Let $G$ be a group and let $A$ be a finite set. Suppose that $\tau \colon A^G \to A^G$ is a post-surjective cellular automaton. Then $\tau$ is pre-injective.
\end{conjecture}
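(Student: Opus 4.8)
\emph{Proof strategy.} The conjecture is presently out of reach unless $G$ is assumed sofic --- for arbitrary $G$ it subsumes Gottschalk's conjecture --- so the plan is to establish it for sofic $G$, the setting this paper generalises. Fix a post-surjective cellular automaton $\tau \colon A^G \to A^G$ with $A$ finite, memory set $M$ and local rule $\mu$, and fix a sofic approximation $(\Sigma_i, \phi_i)_{i \in \N}$ of $G$.

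The first step I would carry out is to extract from post-surjectivity a \emph{uniform bounded retraction radius}: a finite $S \subseteq G$ with $1_G \in S$ such that for every $x \in A^G$ and every $y \in A^G$ with $\{g \in G : y(g) \neq \tau(x)(g)\} \subseteq \{1_G\}$ there is $z \in A^G$ with $\tau(z) = y$ and $\{g : z(g) \neq x(g)\} \subseteq S$. This should follow from a compactness argument in the prodiscrete topology: if no finite $S$ worked, a diagonal extraction would yield a pair $(x,y)$ with $y$ asymptotic to $\tau(x)$ but admitting no preimage asymptotic to $x$, contradicting post-surjectivity. Iterating along a finite discrepancy set $\Omega$ and using shift-equivariance then upgrades this to: for every finite $\Omega \subseteq G$, every $x$, and every $y$ with $y\vert_{G \setminus \Omega} = \tau(x)\vert_{G \setminus \Omega}$, there is $z$ with $\tau(z) = y$ and $z\vert_{G \setminus \Omega S} = x\vert_{G \setminus \Omega S}$ --- with the correction region bounded by $|\Omega|\,|S|$, and no blow-up in its size.

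The heart of the argument would be a two-sided count on the graphs $\Sigma_i$. Assume for contradiction that $\tau$ is not pre-injective, and pick distinct asymptotic $c, d$ with $\tau(c) = \tau(d)$, agreeing off a finite set $E$. Let $\Omega_i \subseteq \Sigma_i$ be the ``bulk'' where $\phi_i$ is an $R$-local isomorphism, $R$ large depending on $E$, $M$, $S$; then $|\Sigma_i \setminus \Omega_i| = o(|\Sigma_i|)$. Let $N_i$ count the restrictions $\tau(w)\vert_{\Omega_i}$ for $w \in A^{\Sigma_i}$. On one side, packing $\Omega_i$ with $\Theta(|\Sigma_i|)$ disjoint translates of the mutually erasable pattern built from $(c,d)$ would give $N_i \le |A|^{(1-\delta)|\Sigma_i|}$ for a fixed $\delta > 0$ (the sofic Garden-of-Eden count). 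On the other side, tiling $\Omega_i$ by disjoint radius-$r$ balls whose $S$-enlargements remain disjoint and applying the bounded retraction independently in each would show that every target pattern on $\Omega_i$ is matched by some $\tau(w)$ off an $\varepsilon(r)$-fraction of sites, with $\varepsilon(r) \to 0$; a Hamming-covering estimate then gives $N_i \ge |A|^{(1-o(1))|\Sigma_i|}$. For $i$ large these contradict each other, forcing $\tau$ to be pre-injective.

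The main obstacle I anticipate is the mutual calibration of parameters and the control of boundary effects: one must choose the tiling radius $r$, the local-isomorphism radius $R$, and the packing density so that all local modifications genuinely occur inside $\Omega_i$ on regions $\phi_i$ identifies with balls of $G$, and so that the $o(|\Sigma_i|)$ boundary and slack sites cannot salvage either inequality. In the algebraic-group setting of this paper (where $A = X(K)$ is infinite), cardinality must be replaced throughout by the Krull dimension of constructible subsets of the varieties $X^{\Omega_i}$ and ``pattern count'' by a mean dimension, with Noetherianity and the uncountability of $K$ substituting for finiteness of $A$; the skeleton --- bounded retraction versus an entropy/dimension comparison along a sofic approximation --- is the same, and it is weak pre-injectivity, not pre-injectivity, that comes out. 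The genuinely open part remains the non-sofic case, where no technique is available.
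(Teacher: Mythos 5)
This statement is a conjecture: the paper offers no proof of it for arbitrary $G$, and only records the sofic case as Theorem~\ref{t:kari-dual-surj} (Capobianco--Kari--Taati, cited from \cite{kari-post-surjective}), so your decision to prove only the sofic case and flag the general case as open is the right reading of the situation. Your sketch is essentially the Capobianco--Kari--Taati argument, which is also the skeleton the paper follows for its algebraic analogue (Theorem~\ref{t:dual-gottschalk-ca-algr-full}): a bounded-radius post-surjectivity lemma (compactness of $A^G$ for finite $A$; Lemma~\ref{l:uniform-psot-surjectivity} in the algebraic setting) combined with a two-sided count over a sofic approximation, where failure of pre-injectivity gives a Moore-type deficit from mutually erasable patterns while post-surjectivity forces the induced map on patterns over the good set $V(3r)$ to be (essentially) onto. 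The only substantive deviation is that you take an approximate, Hamming-covering lower bound where CKT and the paper get exact surjectivity of the induced map $\Phi\colon A^V\to A^{V(3r)}$ by iterating the single-site correction inside the good region; your lossier version still yields $|A|^{(1-o(1))|V|}$ and suffices, so the calibration issues you flag are real but routine.
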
 
\par 
As for Gottschalk's conjecture, the above dual surjunctivity conjecture is also known when the universe is a sofic group (cf. \cite[Theorem~2]{kari-post-surjective}): 
\begin{theorem}[Capobianco-Kari-Taati]
\label{t:kari-dual-surj}
Let $G$ be a sofic group and let $A$ be a finite set. Suppose that $\tau \colon A^G \to A^G$ is a post-surjective cellular automaton. Then $\tau$ is pre-injective.
\end{theorem}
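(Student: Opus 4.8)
The statement is the theorem of Capobianco, Kari and Taati, and I would prove it by the sofic-approximation counting method that underlies the Gromov--Weiss proof of surjunctivity, run in its dual form. Since a cellular automaton with memory $M$ only sees the subgroup $H=\langle M\rangle$, and since $\tau$ is pre-injective (respectively post-surjective) over $G$ if and only if the induced cellular automaton over $H$ is, while $H$ is finitely generated and again sofic, we may and do assume that $G$ itself is finitely generated; fix a finite symmetric generating set containing $M\cup M^{-1}$ and choose $\rho$ with $M\cup M^{-1}$ contained in the $\rho$-ball about the identity, so that altering a configuration inside a set $E$ changes its $\tau$-image only inside the $\rho$-neighbourhood of $E$.

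The first step is to upgrade post-surjectivity to a uniform, bounded-radius statement: there is an integer $r\ge\rho$ such that for every $x\in A^G$ and every $y\in A^G$ differing from $\tau(x)$ at most at the identity, there is $z\in A^G$ with $\tau(z)=y$ that agrees with $x$ outside the $r$-ball about the identity. If this failed for every $r$, one could extract from a sequence of counterexamples $(x_r,y_r)$ a convergent pair $(x,y)$ with $y$ asymptotic to $\tau(x)$; post-surjectivity produces $z$ asymptotic to $x$ with $\tau(z)=y$, and then, for $r$ larger than the radius at which $z$ stops agreeing with $x$, grafting $z$ inside that radius onto $x_r$ outside it yields, after a short check, a valid choice for the pair $(x_r,y_r)$ --- a contradiction (here finiteness of $A$, hence compactness of $A^G$, is used). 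Translating by group elements, this gives a \emph{single-site correction} operation: for any configuration $c$, any point $h\in G$ and any letter $a\in A$, there is a configuration obtained from $c$ by altering it only inside the $r$-ball about $h$ whose $\tau$-image equals $\tau(c)$ everywhere except at $h$, where it equals $a$.

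Now suppose, toward a contradiction, that $\tau$ is not pre-injective, so there are distinct asymptotic $c,d\in A^G$ with $\tau(c)=\tau(d)$. Let $\Omega_0=\{g:c(g)\ne d(g)\}$, a finite nonempty set, and let $\Omega$ be its $2\rho$-neighbourhood; then $c|_\Omega\neq d|_\Omega$, these patterns agree off $\Omega_0$, and they are \emph{mutually erasable} (a direct consequence of $\tau(c)=\tau(d)$ and the choice of $\Omega$): in any configuration, replacing an occurrence of $c|_\Omega$ by $d|_\Omega$ leaves the $\tau$-image unchanged. Fix a sofic approximation of $G$ by finite labelled graphs $\Gamma_n$ with vertex sets $V_n$, $N_n=|V_n|$, and let $\tau_n\colon A^{V_n}\to A^{V_n}$ be the induced finite cellular automata, defined unambiguously by the local rule at each vertex whose neighbourhood of a suitable fixed radius is isomorphic to that of $G$, and arbitrarily (but still locally) at the remaining $o(N_n)$ ``bad'' vertices. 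I then want two estimates, valid for $n$ large. Lower bound, from post-surjectivity: applying the single-site correction successively at every ``deeply good'' vertex of $\Gamma_n$ --- the corrections do not interfere, since correcting one such vertex changes $\tau_n$ only at that vertex --- shows that any prescribed $y\in A^{V_n}$ is matched by some $\tau_n(z)$ on the set $W_n$ of deeply good vertices, whence $|\im(\tau_n)|\ge |A|^{|W_n|}\ge |A|^{(1-\varepsilon_n)N_n}$ with $\varepsilon_n\to 0$. Upper bound, from non-pre-injectivity: choosing at least $\delta N_n$ pairwise far-apart ``good'' copies of $\Omega$ in $\Gamma_n$ for a fixed $\delta>0$, every configuration reduces, copy by copy, to one avoiding the pattern $c|_\Omega$ on all these copies without changing its $\tau_n$-image, so counting reduced configurations gives $|\im(\tau_n)|\le |A|^{N_n}\eta^{N_n}$ for a fixed $\eta=\eta(\Omega,|A|,\delta)<1$. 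Comparing the two bounds forces $\varepsilon_n\ge -\log_{|A|}\eta>0$ for all large $n$, contradicting $\varepsilon_n\to 0$; hence $\tau$ is pre-injective.

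I expect the crux to be the lower bound coming from post-surjectivity: carrying out the compactness uniformization cleanly, and then verifying on the finite graphs $\Gamma_n$ that the single-site corrections really are mutually independent and that the bounded proportion of bad vertices does not spoil the count. The upper bound is the familiar Garden-of-Eden-type pigeonhole estimate and should be routine, the only point of care being to space the copies of $\Omega$ far enough apart that erasures on different copies do not interact.
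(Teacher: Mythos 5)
Your proposal is correct and is essentially the Capobianco--Kari--Taati argument itself, which the paper does not reprove but simply cites as \cite[Theorem~2]{kari-post-surjective}: a compactness upgrade of post-surjectivity to bounded-radius single-site correction, mutually erasable patterns from a failure of pre-injectivity, and the two competing image-counting estimates on a sofic approximation. This is also the same scheme the paper follows for its algebraic generalization, where Lemma~\ref{l:uniform-psot-surjectivity} replaces your compactness step and dimension/connected-component counting in Theorem~\ref{t:dual-gottschalk-ca-algr-full} replaces your cardinality bounds.
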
 
\par 
Moreover, as Bartholdi pointed out in \cite[Theorem~1.6]{bartholdi}, Conjecture~\ref{c:dual-gottschalk-conjecture} also holds for linear cellular automata over sofic groups: 
\begin{theorem}
\label{t:bartholdi-post-surjective}
Let $G$ be a sofic group and let $V$ be a finite dimensional vector space over a field. 
Suppose that $\tau \colon V^G \to V^G$ is a post-surjective linear cellular automaton. Then $\tau$ is pre-injective. 
\end{theorem}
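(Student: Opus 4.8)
\emph{Proof idea.} Write $K$ for the coefficient field and set $V^{(G)} \subset V^G$ for the $K$-subspace of configurations with finite support. I would first record that both hypotheses are really statements about $\tau$ restricted to $V^{(G)}$. Since $\tau$ is linear, $\tau(0)=0$; and since a cellular automaton always carries asymptotic configurations to asymptotic ones, $\tau$ restricts to a $K$-linear endomorphism $\tau_0\colon V^{(G)} \to V^{(G)}$. For asymptotic $c,d$ one has $c-d\in V^{(G)}$ and $\tau(c)=\tau(d)\iff \tau_0(c-d)=0$, so $\tau$ is pre-injective iff $\tau_0$ is injective. Symmetrically, if $y$ is asymptotic to $\tau(x)$ then $y-\tau(x)\in V^{(G)}$, and a preimage $z$ of $y$ asymptotic to $x$ is the same datum as a solution $u=z-x\in V^{(G)}$ of $\tau_0(u)=y-\tau(x)$; in particular, taking $x=0$ makes post-surjectivity of $\tau$ equivalent to surjectivity of $\tau_0$. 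So the theorem reduces to the assertion that a surjective $\tau_0$ is injective.

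Next I would move to the group algebra $R:=K[G]$. A choice of basis identifies $V$ with $K^d$ and $V^{(G)}$ with the free left $R$-module $R^d$, in such a way that the Bernoulli shift action of $G$ corresponds to left multiplication by elements of $G$. As $\tau$ commutes with the shift, $\tau_0$ is a left $R$-module endomorphism of $R^d$, hence is given by right multiplication by a matrix $P\in M_d(R)$. Since $R^d$ is free (hence projective), the hypothesis that $\tau_0$ is surjective means the epimorphism $P\colon R^d\to R^d$ splits, so $PQ=I_d$ for some $Q\in M_d(R)$. If the matrix ring $M_d(R)$ is directly finite, then $QP=I_d$ as well, $P$ is invertible, and $\tau_0$ is bijective, in particular injective. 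Thus the whole statement follows from the stable finiteness of $K[G]$, i.e.\ the direct finiteness of $M_d(K[G])$ for every $d\ge 1$.

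That last point is where the soficity of $G$ does all the work, and it is the theorem of Elek and Szabó. Over $K=\C$ it is classical: $\C[G]$ embeds in the group von Neumann algebra $L(G)$, whose faithful normalized trace rules out proper one-sided inverses in every $M_d(L(G))$. For a general field and a sofic group, a sofic approximation $(\phi_i\colon G\to\Sym(n_i))_i$, extended linearly to maps $K[G]\to M_{n_i}(K)$ via permutation matrices, becomes a ring embedding of $K[G]$ into the quotient $S$ of $\prod_i M_{n_i}(K)$ by the two-sided ideal of those $(a_i)_i$ with $\lim_{i\to\omega}\operatorname{rank}(a_i)/n_i=0$, for a fixed non-principal ultrafilter $\omega$; and $S$ carries the induced faithful Sylvester rank function $r$ with $r(1)=1$. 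The Sylvester inequality $r(xy)\ge r(x)+r(y)-1$ then forces direct finiteness: from $xy=1$ one gets $r(x)=1$ and $x(1-yx)=0$, hence $r(1-yx)\le 1-r(x)=0$ and $1-yx=0$ by faithfulness; running the same argument in each $M_d(S)$ (with $r$ renormalized) gives stable finiteness of $S$, and hence of its subring $M_d(K[G])$. This ultraproduct construction is the only non-formal step and is where I expect the real effort to go; the two reductions above are bookkeeping. (When $K$ is finite, so that $V$ is a finite alphabet, one may instead quote Theorem~\ref{t:kari-dual-surj} directly; but that shortcut fails over infinite fields, whereas the group-algebra argument is the one that anticipates the algebraic-group generalization.)
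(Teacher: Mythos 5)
Your argument is correct, but it is worth saying plainly how it relates to the paper: the paper does not prove Theorem~\ref{t:bartholdi-post-surjective} at all, it imports it from Bartholdi \cite{bartholdi}, and your proof is essentially the argument behind that citation. The two reductions are exactly the right bookkeeping (for a linear CA, pre-injectivity is injectivity on finitely supported configurations and post-surjectivity is surjectivity there, via $y-\tau(x)$), the identification of $G$-equivariant linear maps on $V^{(G)}\cong K[G]^d$ with matrices over the group ring is standard, and the splitting of the epimorphism plus direct finiteness of $M_d(K[G])$ closes the loop; the soficity input is precisely the Elek--Szab\'o stable finiteness theorem, which in a final write-up you should cite rather than re-derive (your ultraproduct/rank-function sketch is their proof, and note that faithfulness of the rank on the image, i.e.\ injectivity of $K[G]\hookrightarrow S$, also uses the sofic approximation, not only the Sylvester inequality). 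Compared with the paper's own machinery, your route buys genuine pre-injectivity over an arbitrary field, with all the analysis outsourced to a ring-theoretic theorem. The paper's new result in this direction, Theorem~\ref{t:dual-gottschalk-ca-algr-full}, generalizes the statement to arbitrary algebraic group alphabets by a genuinely different method --- the uniform post-surjectivity Lemma~\ref{l:uniform-psot-surjectivity} combined with a quantitative sofic-approximation argument counting Krull dimension and connected components --- and in exchange it requires $K$ uncountable and delivers only $(\sbullet[.9])$-pre-injectivity (which agrees with pre-injectivity in the linear case, though the uncountability hypothesis means it does not literally recover Theorem~\ref{t:bartholdi-post-surjective} over countable fields). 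Your module-splitting step has no analogue for nonlinear algebraic-group cellular automata, which is exactly why the paper's proof of its generalization looks so different from yours.
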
 
Several related applications of groups satisfying Conjecture~\ref{c:dual-gottschalk-conjecture} are investigated in the paper \cite{doucha}. 
\par 
Now let us fix a group $G$ and an algebraic group $X$ over an algebraically closed field $K$. Denote by $A \coloneqq X(K)$ the set of $K$-points of $X$. We regard $A\subset X$ as a subset which consists of closed points of $X$.
\par
The set of \emph{algebraic group cellular automata} over $(G,X,K)$, denoted by  $CA_{algr}(G,X,K)$, consists of  cellular automata $\tau \colon A^G \to A^G$ which  admit a memory set $M$ 
with local defining map $\mu \colon A^M \to A$ induced by some homomorphism of algebraic groups
$f \colon X^M \to X$, i.e., $\mu=f\vert_{A^M}$, 
where $X^M$ is the fibered product of copies of $X$ indexed by $M$. 
\par 
In \cite[Definition~8.1]{phung-2020}, two notions of weak pre-injectivity, namely,  $(\sbullet[.9])$-pre-injectivity and  $(\sbullet[.9]\sbullet[.9])$-pre-injectivity, are introduced  for the class $CA_{algr}$ (cf. Section~\ref{s:weak-pre-injectivity}). We prove in Corollary~\ref{c:moore-one-star-implies-two-star-pre-injectivity} that in  $CA_{algr}$, we have: 
\begin{equation} 
\label{e:intro-one-star-implies-two-star}
( \sbullet[.9])\mbox{-pre-injectivity} \implies (\sbullet[.9] \sbullet[.9])\mbox{-pre-injectivity}.
\end{equation} 
\par 
Note that for linear cellular automata,  pre-injectivity, $(\sbullet[.9])$-pre-injectivity, and  $(\sbullet[.9]\sbullet[.9])$-pre-injectivity are equivalent (cf.~\cite[Proposition~8.8]{phung-2020}). 
\par 
 Generalizing Theorem~\ref{t:bartholdi-post-surjective}, we establish Conjecture~\ref{c:dual-gottschalk-conjecture} for the class $CA_{algr}$  where the universe is a sofic group and the alphabet is an arbitrary algebraic group not necessarily connected  (cf.~Theorem~\ref{t:dual-gottschalk-ca-algr-full}). 
\begin{theoremletter}
\label{t:post-surj-full-ca-algr}
Let $G$ be a sofic group and let $X$ be an algebraic group over an uncountable algebraically closed field $K$. 
Suppose that $\tau \in CA_{algr}(G,X,K)$ is post-surjective. Then $\tau$ is   $(\sbullet[.9])$-pre-injective. 
\end{theoremletter}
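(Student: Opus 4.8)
The plan is to transfer to the algebraic setting the dimension-counting argument used for finite alphabets by Capobianco--Kari--Taati \cite{kari-post-surjective} and for linear cellular automata by Bartholdi \cite{bartholdi}, with vector-space dimension replaced everywhere by the Krull dimension of the algebraic groups obtained by restricting $\ker\tau$, and its finite-model analogues, to finite windows. Throughout I use that, since the local map comes from a homomorphism $f\colon X^M\to X$ of algebraic groups, $\tau$ is a homomorphism of the group $A^G=X(K)^G$; hence $\ker\tau$ is an algebraic subshift which is a group, and the finite models below are homomorphisms of algebraic groups. I argue by contraposition: assuming $\tau$ is post-surjective but not $(\sbullet[.9])$-pre-injective, I will derive two incompatible estimates for the dimension of the kernel of the finite model of $\tau$ along a sofic approximation of $G$.

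The first and, I expect, hardest step is to extract from post-surjectivity a \emph{uniform} support bound for preimages: a finite $\Delta\subseteq G$ containing the memory set $M$ such that whenever $y$ is asymptotic to $\tau(x)$ with $\{g:y(g)\neq\tau(x)(g)\}\subseteq\Omega$, one can choose $z$ asymptotic to $x$ with $\tau(z)=y$ and $\{g:z(g)\neq x(g)\}\subseteq\Omega\Delta$. For finite alphabets this is an immediate König-type compactness argument; for the infinite alphabet $X(K)$ it is the crux, and this is where the hypothesis that $K$ be uncountable enters. The idea is that the ``bad'' discrepancies --- those admitting no preimage correctable within a prescribed finite radius --- form a decreasing family of algebraic subshifts; a nonempty algebraic subshift over an uncountable algebraically closed field has a $K$-point, and such a point would contradict post-surjectivity, so the intersection of the family is empty; the Noetherian (descending-chain) property of algebraic subshifts from \cite{cscp-alg-ca,phung-2020} then forces the family to stabilize at the empty set, which is precisely the desired finite radius $\Delta$.

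Next I fix a sofic approximation $(F_n,\sigma_n)_n$ of $G$, $\sigma_n\colon G\to\Sym(F_n)$, and for a sufficiently large window $W$ let $F_n^{\circ}\subseteq F_n$ be the part on which the partial action is free and $W$-compatible, so $|F_n\setminus F_n^{\circ}|\le\varepsilon_n|F_n|$ with $\varepsilon_n\to0$. Pulling $f$ back along $\sigma_n$ gives a homomorphism of algebraic groups $\tau_n\colon X^{F_n}\to X^{F_n}$, computed by the local rule $f$ through $\sigma_n$ over $F_n^{\circ}$ and by the identity over $F_n\setminus F_n^{\circ}$, so that $\dim\ker\tau_n=|F_n|\dim X-\dim\tau_n(X^{F_n})$. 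Using the uniform radius of the previous step: for each ball $B$ of a fixed radius $R$ sitting well inside $F_n^{\circ}$, every $K$-pattern on $B$ is of the form $\tau_n(c)\vert_B$ for some $c$ supported near $B$ (transport the pattern to $G$, extend by $1$, take an asymptotic-to-$1$ preimage whose support lies within $\Delta$ of $B$, and transport back); doing this independently over a near-perfect $R$-ball packing of $F_n^{\circ}$ shows that $\tau_n(X^{F_n})$ contains a product of full-pattern spaces of total dimension $(1-\eta_{R,n})|F_n|\dim X$, where $\eta_{R,n}\to0$ as $R,n\to\infty$ in this order. Hence $\dim\ker\tau_n\le\eta_{R,n}|F_n|\dim X$.

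Finally, by the definition of $(\sbullet[.9])$-pre-injectivity in \cite[\S 8]{phung-2020}, if $\tau$ is not $(\sbullet[.9])$-pre-injective there is a finite $E\subseteq G$ for which the closed subgroup $\Sigma_E\subseteq X^{E}$ consisting of those $p\in X^E$ whose extension by $1$ to $G$ lies in $\ker\tau$ has $\dim\Sigma_E\ge1$. Gluing an independent element of $\Sigma_E$ onto each of the $\asymp|F_n|$ translates $gE$ in a packing of $F_n^{\circ}$ by pairwise $M$-expansion-disjoint copies of $E$ embeds a product of $\asymp|F_n|$ copies of $\Sigma_E$ as a closed subgroup of $\ker\tau_n$, whence $\dim\ker\tau_n\ge c_E|F_n|$ for a constant $c_E>0$ depending only on $E$ and $M$; here I use additivity of dimension over the disjoint-window product, and that $X(K)$ meets every connected component of $X$ so that the possible disconnectedness of $X$ causes no loss. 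Choosing $R$ and then $n$ large enough that $\eta_{R,n}\dim X<c_E$ contradicts the estimate of the previous paragraph, so $\tau$ must be $(\sbullet[.9])$-pre-injective. Besides the uniform-radius step, the point requiring the most care is verifying that the failure of $(\sbullet[.9])$-pre-injectivity really produces a positive-\emph{dimensional}, additively-behaved kernel defect on a fixed finite window; the weak pre-injectivity notions of \cite{phung-2020} are built precisely so that this holds, and the implication \eqref{e:intro-one-star-implies-two-star} is not needed for this argument.
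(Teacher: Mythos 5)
Your first two steps are essentially the paper's own: the uniform correction radius is Lemma~\ref{l:uniform-psot-surjectivity} (proved, as you suggest, by exploiting uncountability of $K$ to force a decreasing countable family of constructible "bad'' sets with empty intersection to stabilize at $\varnothing$), and the surjectivity of the finite model $\Phi\colon A^V\to A^{V(3r)}$ along a sofic approximation, together with the fiber-dimension count bounding $\dim\Ker\Phi$, is exactly how Theorem~\ref{t:dual-gottschalk-ca-algr-full} begins. Up to that point your outline is sound.

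The gap is in the last step. You assert that failure of $(\sbullet[.9])$-pre-injectivity produces a finite window $E$ on which the group $\Sigma_E=\Ker\tau\vert_{(A^E)_e}$ has $\dim\Sigma_E\ge 1$, and you lean on this to get $c_E>0$. That implication is false precisely in the setting this theorem is aimed at, namely disconnected $X$: $(\sbullet[.9])$-pre-injectivity can fail with $H$ a union of all but one connected component of $A^\Omega$, so that $\dim H=\dim A^\Omega$ and the kernel on the window is $0$-dimensional. The notion whose failure is equivalent to a dimension drop is $(\sbullet[.9]\sbullet[.9])$-pre-injectivity, and by Corollary~\ref{c:moore-one-star-implies-two-star-pre-injectivity} together with Proposition~\ref{p:moore-one-star-no-implied-by-two-star-pre-injectivity} (the example $X=\Z/4\Z$, $x\mapsto 2x$) it is strictly weaker than $(\sbullet[.9])$-pre-injectivity; so your appeal to "the weak pre-injectivity notions are built precisely so that this holds'' is exactly backwards. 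The collapse is most visible when $X$ is a finite algebraic group: then $\dim X=0$, no positive $c_E$ exists, and your final inequality $\eta_{R,n}\dim X<c_E$ cannot be satisfied, yet the theorem covers this case and it is already the nontrivial Capobianco--Kari--Taati theorem. The paper closes this hole with a second, independent argument: after the dimension count shows $\dim\Ker\tau\vert_{(A^\Omega)_e}=0$ (which disposes of the case $\dim H<\dim A^\Omega$), it treats the remaining case $\dim H=\dim A^\Omega$ by passing to the induced cellular automaton on the finite set $X_0$ of connected components and deriving a cardinality contradiction $\vert\Phi_0(X_0^V)\vert<\vert X_0^{V(3r)}\vert$ with the surjectivity of $\Phi_0$. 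Without such a component-level counting (or an explicit reduction to the finite-alphabet theorem in that case), your argument proves only $(\sbullet[.9]\sbullet[.9])$-pre-injectivity, not the stated $(\sbullet[.9])$-pre-injectivity.
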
 
\par 
We observe in Example \ref{ex:post-sur-not-pre-inj} that for every group $G$ there exist a complex algebraic group $X$ and $\tau \in CA_{algr}(G,X,\C)$ such that $\tau$ is post-surjective but not pre-injective. Moreover, in  characteristic zero, we prove (Theorem~\ref{t:reversible}) that every post-surjective, pre-injective $\tau \in CA_{algr}$ is reversible with $\tau^{-1} \in CA_{algr}$. 
Such property was known in the litterature for cellular automata with finite alphabet \cite[Theorem~1]{kari-post-surjective} and for linear cellular automata \cite{bartholdi}. 
\par 
The classical Myhill-Moore Garden of Eden theorem for finite alphabets  (cf.~\cite{myhill}, \cite{moore}) asserts that 
a cellular automaton over the group universe $\Z^d$ is pre-injective if and only if it is surjective. Over amenable groups, the theorem was extended to cellular automata with finite alphabet in~\cite{ceccherini} and to linear cellular automata in~\cite{csc-garden-linear}. The theorem fails over non-amenable groups (cf. \cite{bartholdi}, \cite{bartholdi-kielak}, see also \cite{goe-survey}). 
At the end of \cite{gromov-esav}, Gromov asked
\begin{quote}
8.J. Question. Does the Garden of Eden theorem generalize to the 
proalgebraic category?
First, one asks if pre-injective  $\implies$  surjective, while the reverse implication 
needs further modification of definitions.
\end{quote}
\par 
Let  $G$ be an amenable group and let $K$ be an algebraically closed field. 
The papers \cite{cscp-alg-goe} and \cite{phung-2020} respectively give a positive answer to Gromov's question for the class $CA_{alg}(G,X,K)$ (cf.~Section \ref{s:alg-ca}) when $X$ is a complete irreducible algebraic variety over $K$ and for the class $CA_{algr}(G,X,K)$ when $X$ is a connected algebraic group over $K$. 
\par 
In this paper, we obtain the following complete answer to Gromov's question for the class $CA_{algr}(G,X,K)$ where $X$ is an arbitrary algebraic group  (cf.~Theorem~\ref{t:myhill-ca-algr-full}, Theorem~\ref{t:moore-ca-algr-full}). 

\begin{theoremletter}
 \label{t:gromov-answer-algr-full} 
 Let $G$ be an amenable group and let $X$ be an algebraic group over  an algebraically closed field $K$. Suppose that $\tau \in CA_{algr}(G,X,K)$.  Then the following hold: 
 \begin{enumerate} [\rm (i)]
     \item 
     If $\tau$ is pre-injective, then it is surjective; 
     \item 
     If $\tau$ is surjective, then it is both $(\sbullet[.9])$- pre-injective and $(\sbullet[.9] \sbullet[.9])$-pre-injective. 
 \end{enumerate}
\end{theoremletter}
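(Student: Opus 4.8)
The plan is to run the Garden of Eden dichotomy after splitting $X$ along its identity component. First I would write $X^0$ for the identity component of $X$ and $F\coloneqq\pi_0(X)=X/X^0$, a finite group, so that over $K=\bar{K}$ there is an extension $1\to X^0(K)\to A\to F\to 1$ and, for each finite $\Omega\subseteq G$, the group $X^\Omega$ has identity component $(X^0)^\Omega$ and component group $F^\Omega$. Since $\mu=f\vert_{A^M}$ comes from a homomorphism of algebraic groups $f\colon X^M\to X$, the subgroup $(X^0)^G\subseteq A^G$ is $\tau$-invariant, so $\tau^0\coloneqq\tau\vert_{(X^0)^G}$ lies in $CA_{algr}(G,X^0,K)$ with $X^0$ \emph{connected}, and $f$ induces $\bar{f}\colon F^M\to F$, hence a finite-alphabet cellular automaton $\bar\tau\colon F^G\to F^G$ with $\mathrm{pr}\circ\tau=\bar\tau\circ\mathrm{pr}$. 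Because $\tau$ acts on each finite window through a homomorphism of algebraic groups, $\tau(A^G)\vert_\Omega$ is the group of $K$-points of a closed algebraic subgroup $H_\Omega\le X^\Omega$; so $\tau(A^G)$ is a pro-algebraic subshift and, by the pro-algebraic surjunctivity formalism available for $CA_{algr}$, $\tau$ is surjective iff $H_\Omega=X^\Omega$ for every $\Omega$. Since a closed subgroup of $X^\Omega$ of full dimension and full component group equals $X^\Omega$, while $\dim H_\Omega$ is governed by $\tau^0$ and, once $H_\Omega^0=(X^0)^\Omega$, the component group $\pi_0(H_\Omega)$ is governed by $\bar\tau$, this gives the key reduction: $\tau$ is surjective if and only if $\tau^0$ and $\bar\tau$ are both surjective.

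For (ii), assuming $\tau$ surjective, by \eqref{e:intro-one-star-implies-two-star} it is enough to prove $\tau$ is $(\sbullet[.9])$-pre-injective. By the reduction $\tau^0$ is surjective, hence $(\sbullet[.9])$-pre-injective by the already known connected case of this theorem in \cite{phung-2020}. If $\tau$ were not $(\sbullet[.9])$-pre-injective, then by \cite[Definition~8.1]{phung-2020} there would be a positive-dimensional irreducible algebraic family of pairwise distinct, mutually asymptotic configurations on which $\tau$ is constant. As the family is irreducible and $A\to F$ is a morphism into the discrete variety $F$, all its members have the same image in $F^G$; left multiplying by the inverse of any one member would then move the whole family into $(X^0)^G$, still positive-dimensional, pairwise distinct, mutually asymptotic and $\tau^0$-constant, contradicting $(\sbullet[.9])$-pre-injectivity of $\tau^0$. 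The remaining assertion follows since $(\sbullet[.9])$-pre-injectivity implies $(\sbullet[.9]\sbullet[.9])$-pre-injectivity by \eqref{e:intro-one-star-implies-two-star}.

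For (i), assuming $\tau$ pre-injective, restricting to $(X^0)^G$ shows $\tau^0$ is pre-injective, hence surjective by the connected case in \cite{phung-2020}; by the reduction it then remains to show $\bar\tau$ surjective, equivalently---by the classical Garden of Eden theorem over amenable groups \cite{myhill,moore,ceccherini}---pre-injective. Since $\tau$ is a homomorphism of the configuration group, pre-injectivity is equivalent to $\ker\tau\cap A^{(G)}=\{1\}$ for the finitely supported configurations $A^{(G)}$. Any finitely supported $\bar{e}\in\ker\bar\tau$ lifts to a finitely supported $e_0\in A^G$ with $\tau(e_0)\in(X^0)^{(G)}$, and one can correct $e_0$ by a finitely supported element of $(X^0)^{(G)}$ to obtain an element of $\ker\tau$ exactly when $\tau(e_0)\in\tau^0\bigl((X^0)^{(G)}\bigr)$; thus $\bar{e}\mapsto\tau(e_0)$ descends to a homomorphism $\ker\bar\tau\cap F^{(G)}\to Q\coloneqq (X^0)^{(G)}/\tau^0\bigl((X^0)^{(G)}\bigr)$ which, by pre-injectivity of $\tau$, must be injective. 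If $\bar\tau$ were not pre-injective, then $\ker\bar\tau\cap F^{(G)}$ would be an infinite torsion group of exponent dividing $|F|$, growing exponentially along a Følner exhaustion by the Garden of Eden entropy deficiency of $\bar\tau$; I would then need to show that it cannot embed into $Q$, the obstruction being that $Q$ --- a quotient of $(X^0)^{(G)}$ by $\tau^0$ of the finitely supported configurations, and $\tau^0$ is surjective --- is too constrained by the algebraic structure of $X^0$ to receive such a subgroup. This would produce a nontrivial finitely supported element of $\ker\tau$, a contradiction; hence $\bar\tau$ is pre-injective, so surjective, and so is $\tau$.

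The hard part will be exactly this last step: deducing surjectivity of the induced finite-alphabet automaton $\bar\tau$ from honest pre-injectivity of $\tau$. In the connected case the analogous statement reduces to a mean-dimension count as in \cite{phung-2020}, but here it does not, because $\tau^0$ need not be post-surjective (finitely supported targets need not admit finitely supported preimages), so finitely supported kernel elements of $\bar\tau$ cannot simply be lifted to $\tau$. The crux is to make precise the comparison between the finitely supported kernel of $\bar\tau$ and the obstruction group $Q$, controlling $Q$ via the structure of $X^0$ --- including its torsion, where positive characteristic with unipotent factors will need care --- uniformly along a Følner exhaustion; quite possibly this requires a finer alphabet invariant than $\log|\pi_0(X)|$. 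An equivalent route --- a single invariant $\bigl(\mdim(\tau(A^G)),\,\lim_n|F_n|^{-1}\log|\pi_0(H_{F_n})|\bigr)$ and a Myhill--Moore tiling argument tracking both coordinates at once --- runs into the same core difficulty in different clothing.
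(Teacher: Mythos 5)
Your decomposition into $\tau^0$ on $(X^0)^G$ and the finite-alphabet automaton $\bar\tau$ on $F^G=\pi_0(X)^G$ is essentially the same device the paper uses (its $\tau_0$ on $X_0^G$, Lemma~\ref{l:symbolic-map-conn-also-algr}), but both halves of your argument have genuine gaps. For (i), the step you yourself flag as open is precisely the heart of the matter: deducing pre-injectivity of $\bar\tau$ from pre-injectivity of $\tau$. The paper does not go through your obstruction group $Q=(X^0)^{(G)}/\tau^0\bigl((X^0)^{(G)}\bigr)$ at all (and controlling $Q$ along a F\o lner exhaustion looks intractable, especially in positive characteristic); instead it argues directly: if $\tau_0$ were not pre-injective, there would be a component $U$ of $A^{EM\setminus E}$ and distinct components $V_1,V_2$ of $A^E$ with $\tau^+_E(U\times V_1)$, $\tau^+_E(U\times V_2)$ in one component $Z$ of $A^E$; fixing $u\in U$, pre-injectivity of $\tau$ together with the fiber-dimension theorem applied to the irreducible variety $\{u\}\times V_i$ forces $\tau^+_E(\{u\}\times V_i)=Z$ for $i=1,2$, producing distinct asymptotic configurations with equal image --- a contradiction (see the proof of Theorem~\ref{t:myhill-ca-algr-full}). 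With that in hand, $\bar\tau$ is surjective by the classical Garden of Eden theorem, and full dimension of the windows (obtained there via mean dimension, Proposition~\ref{p:**-implies-mdim}) plus full component group plus closedness of the image give surjectivity of $\tau$. So your overall skeleton is right, but the one lemma you could not supply is exactly the new content, and your proposed route to it is not the one that works.

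For (ii) there is an outright error rather than an omission: failure of $(\sbullet[.9])$-pre-injectivity does \emph{not} produce ``a positive-dimensional irreducible family of pairwise distinct, mutually asymptotic configurations on which $\tau$ is constant.'' Definition~\ref{d:weak-pre} only says $\tau((A^\Omega)_e)=\tau(H_e)$ for some proper closed $H\subsetneq A^\Omega$; when $\dim H=\dim A^\Omega$ (i.e.\ $H$ contains whole connected components), the fibers can be finite and no such family exists --- the paper's own example with $X=\Z/4\Z$ (Proposition~\ref{p:moore-one-star-no-implied-by-two-star-pre-injectivity}) is a zero-dimensional counterexample to your claimed dichotomy. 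This component-level failure mode is exactly what your argument never addresses, and it is where surjectivity of $\bar\tau$ must enter: in the paper's proof of Theorem~\ref{t:moore-ca-algr-full}, Case 2b writes $H=Z\cup V$ with $V$ a union of components, deduces $\tau(H_e)=\tau(V_e)$, concludes that $\tau_0$ is not pre-injective, and then invokes the classical Moore theorem for the finite alphabet $X_0$ to contradict surjectivity of $\tau$; your sketch uses only $\tau^0$ and so cannot see this obstruction. Separately, the forward direction of your ``key reduction'' ($\tau$ surjective $\Rightarrow\tau^0$ surjective) is asserted but not justified; it is true, but needs the fact that a surjective homomorphism of algebraic groups maps identity components onto identity components (applied to the window maps $\tau^+_\Omega$) together with closedness of the image of $\tau^0$, neither of which appears in your outline.
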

\par 
    In Proposition~\ref{p:myhill-ca-algr-two-star-not-hold}, we show that one cannot replace the pre-injectivity hypothesis in Theorem~\ref{t:gromov-answer-algr-full}.(i) by the weaker $(\sbullet[.9] \sbullet[.9])$-pre-injectivity. Moreover, we obtain a very general result (cf.~Theorem~\ref{t:post-surjective-implies-surjective}) saying that post-surjectivity implies surjectivity in $CA_{algr}$ and $CA_{alg}$. Consequently, when the universe $G$ is an amenable group, Theorem~\ref{t:gromov-answer-algr-full}.(ii) implies Theorem~\ref{t:post-surj-full-ca-algr}. 
 \par 
The paper is organized as follows. In Section \ref{s:preliminary}, we present briefly important properties of sofic groups as well as amenable groups. Section~\ref{s:alg-ca} recalls basic definitions about the classes $CA_{alg}$ and $CA_{algr}$.  
In Section~\ref{s:induced-map-connected-component}, we introduce the useful tool of induced maps on the set of connected components of algebraic varieties and give some applications to the class $CA_{algr}$. 
Then in Section~\ref{s:weak-pre-injectivity}, we investigate at length $(\sbullet[.9])$-pre-injectivity and $(\sbullet[.9] \sbullet[.9])$-pre-injectivity in the class $CA_{algr}$ and prove \eqref{e:intro-one-star-implies-two-star}.   
In Section~\ref{s:uniform-post-surj}, we establish a certain uniform post-surjectivity property (Lemma~\ref{l:uniform-psot-surjectivity}) and show in particular that post-surjectivity implies surjectivity in  $CA_{alg}$ and $CA_{algr}$ (Theorem~\ref{t:post-surjective-implies-surjective}).  
We present the proof of Theorem~\ref{t:post-surj-full-ca-algr} in  Section~\ref{s:post-surjectivity}. Then 
Theorem~\ref{t:myhill-ca-algr-full} establishes the Myhill property for $CA_{algr}$ as stated in  Theorem~\ref{t:gromov-answer-algr-full}.(i). Finally, the Moore property for $CA_{algr}$, i.e.,  
Theorem~\ref{t:gromov-answer-algr-full}.(ii), is proved in Section~\ref{s:moore} (Theorem~\ref{t:moore-ca-algr-full}).

\section{Preliminaries}
\label{s:preliminary}
To simplify the presentation, we suppose as a convention throughout the paper that the universe $G$ is always a finitely generated group. 
\par 
Following \cite[Corollaire 6.4.2]{grothendieck-ega-1-1}, an algebraic variety over an algebraically closed field $K$  is a reduced $K$-scheme of finite type and is 
identified with the set  $X(K)$ of $K$-points. An algebraic group is a group that is an algebraic variety with group operations
given by algebraic morphisms (cf.~\cite{milne}).  Algebraic subvarieties are Zariski
closed subsets and algebraic subgroups are  subgroups which are
also algebraic subvarieties. 

\subsection{Sofic groups} 
The important class of sofic groups was introduced by Gromov \cite{gromov-esav} and Weiss \cite{weiss-sgds} as a common generalization of residually finite groups and  amenable groups. 
Many conjectures for groups have been established for the sofic ones such as 
Gottschalk's surjunctivity conjecture and its  
dual surjunctivity conjecture  (cf.~\cite{kari-post-surjective}). See also 
\cite{capraro}, \cite{ca-and-groups-springer} for some more details. 
 \par
Let $S$ be a finite set. Then an \emph{$S$-label graph} is a pair $\GG= (V,E)$, 
where $V$ is the set of \emph{vertices}, 
and $E \subset V \times S \times V$ is the set of \emph{edges}. 
\par
Denote by $l(\rho)$ 
the \emph{length} of a path $\rho$ in $\GG$. 
If $v, v'\in V$ are not connected by a path in $\GG$, we set $d_\GG(v,v')=\infty$. 
Otherwise, we define  $d_\GG(v,v')\coloneqq \min \{ l(\rho): \text{$\rho$ is a path from $v$ to $v'$}\}$. 
For $v\in V$ and $r \geq 0$,  we define  
\[
B_\GG(v,r)\coloneqq \{v'\in V: d_\GG (v,v') \leq r\}. 
\]
\par 
Observe that $B_\GG(v,r)$ is naturally a finite $S$-labeled subgraph of $\GG$. 
\par
Let $(V_1,E_1)$ and $(V_2,E_2)$ be two $S$-label graphs. 
A map $\phi \colon V_1 \to V_2$ is called an \emph{$S$-labeled graph homomorphism} 
from $(V_1, E_1)$ to $(V_2,E_2)$ if $(\phi(v),s, \phi(v')) \in E_2$ for all $(v,s,v')\in E_1$. 
A bijective $S$-labeled graph homomorphism $\phi \colon V_1 \to V_2$ is an 
\emph{$S$-labeled graph isomorphism} if its inverse  $\phi^{-1}\colon V_2 \to V_1$ 
is an  $S$-labeled graph homomorphism. 
\par
Now let $G$ be a finitely generated group and let $S\subset G$ 
be a finite \emph{symmetric} generating subset, i.e., $S=S^{-1}$. 
The \emph{Cayley graph of $G$ with respect to $S$} is the connected $S$-labeled graph $C_S(G) = (V,E)$,  
where $V = G$ and $E=\{(g,s,gs): g\in G \text{ and } s \in S)\}$.  
For $g \in G$ and $r\geq 0$, we denote 
\[
B_S(r)\coloneqq B_{C_S(G)}(1_G,r)  
\]  
\par 
We can characterize sofic groups as follows (\cite[Theorem~7.7.1]{ca-and-groups-springer}).

\begin{theorem}
\label{t:sofic-character}
Let $G$ be a finitely generated group. 
Let $S\subset G$ be a finite symmetric generating subset. 
Then the following are equivalent: 
\begin{enumerate} [\rm (a)]
\item
the group $G$ is sofic;
\item
for all $r, \varepsilon >0$, there exists a finite $S$-labeled graph $\GG=(V,E) $ 
satisfying 
\begin{equation} 
\label{e:sofic-Q(3r)}
\vert V(r) \vert \geq (1 - \varepsilon) \vert V \vert,
\end{equation}
where $V(r)\subset V$ consists of $v \in V$ such that there exists a (unique) $S$-labeled graph
isomorphism $\psi_{v,r} \colon B_S(r) \to B_\GG(v,r)$ with $\psi_{v,r}(1_G) = v$. 
\end{enumerate}
\end{theorem}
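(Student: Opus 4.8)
The plan is to prove the equivalence by going through the usual ``finite permutation approximation'' formulation of soficity: $G$ is sofic if and only if for every finite $F \subseteq G$ and every $\delta > 0$ there exist a nonempty finite set $V$ and a map $\sigma \colon G \to \Sym(V)$ with $\sigma(1_G) = \Id_V$ such that $\bigl|\{v \in V : \sigma(gh)(v) = \sigma(h)(\sigma(g)(v))\}\bigr| \geq (1-\delta)\,|V|$ for all $g, h \in F$ and $\bigl|\{v \in V : \sigma(g)(v) \neq v\}\bigr| \geq (1-\delta)\,|V|$ for all $g \in F \setminus \{1_G\}$ (taking $\sigma$ to be an approximate \emph{anti}-homomorphism rather than an approximate homomorphism is harmless, since one may replace $\sigma$ by $g \mapsto \sigma(g^{-1})$). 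The equivalence of this condition with the other standard definitions of soficity is classical, see \cite[Chapter~7]{ca-and-groups-springer}; thus it suffices to prove ``(a) $\Leftrightarrow$ (b)'' with (a) read in this form.

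For (a) $\Rightarrow$ (b), fix $r, \varepsilon > 0$, apply the condition above with $F = B_S(2r)$ and $\delta$ small compared to $\varepsilon/|B_S(2r)|^{2}$, and let $\sigma \colon G \to \Sym(V)$ be the resulting map. Form the $S$-labeled graph $\GG = (V,E)$ with $E = \{(v, s, \sigma(s)(v)) : v \in V,\ s \in S\}$, and let $W \subseteq V$ be the set of vertices that lie outside every ``defect set'' $\{v : \sigma(gh)(v) \neq \sigma(h)(\sigma(g)(v))\}$ with $g, h \in B_S(r)$ and outside every set $\{v : \sigma(g)(v) = \sigma(g')(v)\}$ with $g \neq g'$ in $B_S(r)$; by the choice of $\delta$ this gives $|W| \geq (1-\varepsilon)\,|V|$. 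For $v \in W$ put $\psi_{v,r}(g) \coloneqq \sigma(g)(v)$ for $g \in B_S(r)$. Since $v \in W$, the relation $\sigma(gs)(v) = \sigma(s)(\sigma(g)(v))$ holds for every edge $(g,s,gs)$ of $C_S(G)$ with $g, gs \in B_S(r)$, so $\psi_{v,r}$ maps $B_S(r)$ label-preservingly into $\GG$; using $\sigma(s)^{-1} = \sigma(s^{-1})$ (again valid off the same defect sets, because $\sigma(s s^{-1}) = \sigma(1_G) = \Id_V$) one checks that the image is precisely $B_\GG(v,r)$, and the approximate freeness on $B_S(r)$ makes $\psi_{v,r}$ injective, hence an $S$-labeled graph isomorphism $B_S(r) \to B_\GG(v,r)$ with $\psi_{v,r}(1_G) = v$. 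Its uniqueness is automatic because $C_S(G)$ is connected, so any such isomorphism is forced edge by edge. Hence $W \subseteq V(r)$, which gives \eqref{e:sofic-Q(3r)}.

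For (b) $\Rightarrow$ (a), fix a finite $F \subseteq G$, say $F \subseteq B_S(R)$, and $\delta > 0$, and use (b) to choose a finite $S$-labeled graph $\GG = (V,E)$ with $|V(r_0)| \geq (1 - \delta')\,|V|$ for a sufficiently large radius $r_0$ (depending only on $R$) and a sufficiently small $\delta'$. Since $V(r_0) \subseteq V(1)$, for each $s \in S$ the rule sending $v \in V(1)$ to its unique out-neighbour carrying label $s$ is well defined, and it is injective on $V(r_0)$ (both facts follow from $B_\GG(v,1) \cong B_S(1)$ together with $S = S^{-1}$, which makes $1_G$ have exactly one out-edge and exactly one in-edge with a prescribed label); extend this partial injection arbitrarily to a permutation $\sigma(s) \in \Sym(V)$. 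Now define $\sigma \colon G \to \Sym(V)$ by $\sigma(1_G) = \Id_V$, by composing the permutations $\sigma(s)$ along a word of length $\leq 2R$ representing $g$ whenever $g \in B_S(2R)$, and by $\sigma(g) = \Id_V$ otherwise. For every $v \in V(r_0)$ the local isomorphism $\psi_{v,r_0}$ forces $\sigma(g)(v) = \psi_{v,r_0}(g)$ for all $g \in B_S(2R)$, independently of the chosen word, because all partial products stay inside $B_S(2R)$ and the slack between $r_0$ and $3R$ keeps every intermediate vertex in a region where balls of $\GG$ of radius $1$ are still isomorphic to $B_S(1)$. This identity yields $\sigma(gh)(v) = \psi_{v,r_0}(gh) = \sigma(h)(\sigma(g)(v))$ and $\sigma(g)(v) \neq v$ for all $g, h \in F$ and $v \in V(r_0)$ with $g \neq 1_G$; since each $\sigma(s)$ was changed on at most $|V \setminus V(r_0)| \leq \delta'\,|V|$ points, choosing $\delta'$ small compared to $\delta/(R\,|S|)$ makes the exceptional sets for all $g, h \in F$ of size at most $\delta\,|V|$, which is exactly the permutation condition.

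The defect-set counting and the verification that $\psi_{v,r}$ preserves labeled edges are routine. The step I expect to be the real work is the radius bookkeeping in (b) $\Rightarrow$ (a): once the partial ``$s$-neighbour'' maps have been extended to honest permutations of all of $V$, one must check that the composite $\sigma(g)$ still coincides with $\psi_{v,r_0}(g)$ for every $v \in V(r_0)$ and every word representing $g$ --- equivalently, that the local isomorphisms on overlapping balls are mutually compatible --- since it is this that delivers both the approximate-homomorphism and the approximate-freeness estimates and pins down how large $r_0$ must be taken relative to $R$.
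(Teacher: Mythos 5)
The paper does not prove this statement at all: Theorem~\ref{t:sofic-character} is quoted from \cite[Theorem~7.7.1]{ca-and-groups-springer}, so there is no internal argument to compare yours against. Your route --- passing through the approximate-action formulation of soficity (maps $\sigma \colon G \to \Sym(V)$ that are almost multiplicative and almost free on a finite window) and translating back and forth between such maps and finite $S$-labeled graphs --- is precisely the standard proof of the cited result, so in substance you are reconstructing the reference rather than taking a genuinely different path.

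Two spots in your sketch need tightening, and they share the same root: conditions you impose only at the base vertex $v$ must in fact be imposed at the intermediate vertices of the paths you traverse. In (a) $\Rightarrow$ (b), the identity $\sigma(s^{-1})(\sigma(s)(u)) = u$ that you invoke to show the image of $\psi_{v,r}$ exhausts $B_\GG(v,r)$ is a condition at the intermediate vertex $u$, not at $v$; as written, membership of $v$ in $W$ does not control it. The standard fixes are either to symmetrize the edge set (include $(v,s,w)$ only when both $\sigma(s)(v)=w$ and $\sigma(s^{-1})(w)=v$) or to add to $W$ the requirement that every point $\sigma(g)(v)$, $g \in B_S(r)$, also avoids the relevant defect sets, which still leaves a $(1-\varepsilon)$-fraction of $V$ by a union bound. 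In (b) $\Rightarrow$ (a), if the partial $s$-neighbour injections are retained only on $V(r_0)$ before the arbitrary extension, then the composite $\sigma(g)(v)$ is only known to follow the graph while the intermediate vertices remain in $V(r_0)$; but for $v \in V(r_0)$ they are only guaranteed (via Lemma~\ref{l:sofic-V'}(i)) to lie in a smaller set of the form $V(r_0 - 2R)$, so the key identity $\sigma(g)(v) = \psi_{v,r_0}(g)$ is not justified as stated --- exactly the step you yourself flag as ``the real work.'' It closes if you let $\sigma(s)$ agree with the $s$-neighbour map on $V(2)$: injectivity there follows because the image vertex lies in $V(1)$ and has a unique incoming $s$-labeled edge in its radius-one ball, and then for $r_0$ large compared with $R$ every intermediate vertex of a path of length at most $2R$ issued from $v \in V(r_0)$ stays in $V(2)$, which yields both the almost-multiplicativity and the almost-freeness estimates. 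These are repairable bookkeeping points rather than wrong ideas, but as written they are left open.
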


Let $0 \leq r\leq r'$. Then $V(r') \subset V(r)$ since every $S$-labeled graph isomorphism $\psi_{v,r'} \colon B_S(r') \to B_\GG(v,r')$ induces by restriction  
an $S$-labeled graph isomorphism $B_S(r) \to B_\GG(v,r)$. 
\par 
We shall need the following well-known Packing lemma (cf. \cite{weiss-sgds},  \cite[Lemma~7.7.2]{ca-and-groups-springer}, see also \cite{phung-2020} for (ii)).  

\begin{lemma}
\label{l:sofic-V'} 
With the notation as in Theorem~\ref{t:sofic-character}, the following hold 
\begin{enumerate}[\rm(i)]
\item
$B_\GG(v,r) \subset V(kr)$ for all $v \in V((k+1)r)$ and $k \geq 0$;
\item
There exists a finite subset $V' \subset V(3r)$ such that the balls $B_\GG(q,r)$ 
are pairwise disjoint for all $v\in V'$ and that 
$V(3r) \subset \bigcup_{v\in V'} B_\GG(v,2r)$. 
\end{enumerate}
\end{lemma}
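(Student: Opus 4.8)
The plan is to obtain (i) by transporting the left–translation symmetries of the Cayley graph $C_S(G)$ through the given ball isomorphisms $\psi_{v,r}$, and to obtain (ii) by a greedy maximal–packing argument. Throughout I write $d_{C_S(G)}$ for the graph metric of $C_S(G)$.

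I would begin with the elementary observation underlying (i): any geodesic, in $\GG$ or in $C_S(G)$, joining two vertices at distance $\le \rho$ has every vertex within distance $\rho$ of each of its endpoints. Combined with the triangle inequality this yields a restriction principle: for $v \in V(R)$, the isomorphism $\psi_{v,R}$ restricts to an $S$-labeled graph isomorphism of $B_S(\rho)$ onto $B_\GG(v,\rho)$ for every $\rho \le R$, and more generally of $B_{C_S(G)}(g,\rho)$ onto $B_\GG(\psi_{v,R}(g),\rho)$ whenever $g \in B_S(r)$ and $\rho + r \le R$ — the point being that the geodesics involved never leave $B_S(R)$ or $B_\GG(v,R)$, so that sub-ball membership and incident edges are detected identically on both sides. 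Granting this, to prove (i) I fix $v \in V((k+1)r)$ and $w \in B_\GG(v,r)$, set $R \coloneqq (k+1)r$, and let $g \coloneqq \psi_{v,R}^{-1}(w)$; the first part of the restriction principle (with $\rho = r$) gives $g \in B_S(r)$. Since $d_{C_S(G)}(1_G, gh) \le d_{C_S(G)}(1_G,g) + d_{C_S(G)}(g,gh) \le r + kr = R$ for all $h \in B_S(kr)$, left translation $L_g \colon h \mapsto gh$ is an $S$-labeled graph isomorphism of $B_S(kr)$ onto $B_{C_S(G)}(g,kr)$, and this ball lies inside $B_S(R)$; composing $L_g$ with the restriction of $\psi_{v,R}$ to $B_{C_S(G)}(g,kr)$ — which, by the second part of the restriction principle (with $\rho = kr$, so $\rho + r = R$), is an isomorphism onto $B_\GG(w,kr)$ — produces an $S$-labeled graph isomorphism $B_S(kr) \to B_\GG(w,kr)$ sending $1_G$ to $\psi_{v,R}(g) = w$. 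Hence $w \in V(kr)$, which proves (i) (the constructed isomorphism is automatically the unique one).

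For (ii), since $V$ is finite I would choose $V' \subseteq V(3r)$ maximal for inclusion among those subsets for which the balls $B_\GG(q,r)$, $q \in V'$, are pairwise disjoint; the disjointness then holds by construction. Given $v \in V(3r)$ with $v \notin V'$, maximality forces $B_\GG(v,r)$ to meet some $B_\GG(q,r)$ with $q \in V'$, and picking $x$ in the intersection gives $d_\GG(v,q) \le d_\GG(v,x) + d_\GG(x,q) \le 2r$, i.e. $v \in B_\GG(q,2r)$; if instead $v \in V'$ then trivially $v \in B_\GG(v,2r)$. Thus $V(3r) \subseteq \bigcup_{q \in V'} B_\GG(q,2r)$, which is the second assertion of (ii).

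The step I expect to be the main obstacle is the restriction principle used in (i): a priori a geodesic between two vertices lying near the centre of a large ball could wander outside that ball, where $\psi_{v,R}$ carries no information; the resolution is simply that a geodesic never strays farther from either endpoint than its own length, so the triangle inequality confines every geodesic in question to the radius-$R$ balls on which $\psi_{v,R}$ is defined. Once this is isolated, parts (i) and (ii) are purely combinatorial and present no difficulty — (ii) being the classical Packing lemma, which uses no new idea.
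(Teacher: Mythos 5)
Your proof is correct, and in fact the paper offers no proof of this lemma at all — it simply cites Weiss and \cite[Lemma~7.7.2]{ca-and-groups-springer}. Your argument (confining geodesics to the balls where $\psi_{v,(k+1)r}$ is defined so that it preserves sub-balls and distances, then transporting the left-translation $h\mapsto gh$ of the Cayley graph through $\psi_{v,(k+1)r}$ to produce the isomorphism $B_S(kr)\to B_\GG(w,kr)$ for (i), and taking a maximal family of disjoint $r$-balls for (ii)) is exactly the standard proof found in those references, so there is nothing to correct or compare.
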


\subsection{Tilings of groups} 
Let $G$ be a group and let $E,E' \subset G$. A subset $T \subset G$ is called  an $(E,E')$-\emph{tiling} if:
\begin{enumerate}[\rm (T-1)]
\item 
 the subsets $gE$, $g \in T$, are pairwise disjoint,
 \item
$G = \bigcup_{g \in T} gE'$.
\end{enumerate}
 \par
We shall need the following existence  result which is an immediate consequence of Zorn's lemma 
(see \cite[Proposition~5.6.3]{ca-and-groups-springer}).

\begin{proposition} 
\label{p:tilings-exist}
Let $G$ be a group. Let $E$ be a non-empty finite subset of $G$ and let
$E' \coloneqq  EE^{-1} = \{g h^{-1} : g,h \in E\}$. 
Then there exists  an $(E,E')$-tiling $T \subset G$. \qed 
\end{proposition}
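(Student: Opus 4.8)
The plan is to construct $T$ by a maximality argument via Zorn's lemma, so that condition (T-1) holds by construction and (T-2) is forced by maximality.

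First I would consider the set $\mathcal{P}$ of all subsets $S \subseteq G$ such that the translates $gE$, $g \in S$, are pairwise disjoint, partially ordered by inclusion. It is non-empty, as $\emptyset \in \mathcal{P}$. The property defining $\mathcal{P}$ has \emph{finite character}: a set $S$ belongs to $\mathcal{P}$ precisely when every two-element subset of $S$ does. Consequently, the union of any chain in $\mathcal{P}$ again lies in $\mathcal{P}$, since any two distinct elements of such a union already occur together in a single member of the chain, where they index disjoint translates. Zorn's lemma then provides a maximal element $T \in \mathcal{P}$.

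Next I would verify that this $T$ is an $(E,E')$-tiling. Condition (T-1) holds by the definition of $\mathcal{P}$. For (T-2), the elementary observation is that for $g,h \in G$,
\[
gE \cap hE \neq \emptyset \iff g \in hEE^{-1} = hE',
\]
because $ge_1 = he_2$ with $e_1,e_2 \in E$ is equivalent to $g = h e_2 e_1^{-1} \in hE'$. Suppose, for contradiction, that $G \neq \bigcup_{h \in T} hE'$, and pick $g$ outside this union. Choosing any $e \in E$ (possible since $E \neq \emptyset$) shows $1_G = e e^{-1} \in E'$, hence $g \in gE'$; in particular $g \notin T$. By the displayed equivalence, $gE$ is disjoint from $hE$ for every $h \in T$, so $T \cup \{g\} \in \mathcal{P}$ properly contains $T$, contradicting maximality. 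Therefore $G = \bigcup_{h \in T} hE'$, which is (T-2).

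I do not expect a genuine obstacle here. The only two points that need care are the finite-character observation, which is what guarantees that chains in $\mathcal{P}$ have upper bounds inside $\mathcal{P}$, and the use of $1_G \in E'$ — the single place where the hypothesis $E \neq \emptyset$ is used — to ensure that elements lying in $T$ itself are covered by $\bigcup_{h \in T} hE'$.
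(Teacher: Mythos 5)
Your proof is correct and is exactly the standard Zorn's lemma argument that the paper invokes (it cites \cite[Proposition~5.6.3]{ca-and-groups-springer} rather than writing it out): take a maximal family of pairwise disjoint translates $gE$ and use the equivalence $gE \cap hE \neq \varnothing \iff g \in hEE^{-1}$ together with $1_G \in E'$ to get the covering condition. Nothing further is needed.
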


\subsection{Amenable group and algebraic mean dimension} 
Amenable groups were introduced by von Neumann in \cite{neumann-amenable}. 
A group $G$ is \emph{amenable} if it admits a \emph{F\o lner net}, i.e., a family $(F_i)_{i \in I}$ over a directed set $I$ consisting of nonempty finite subsets of $G$ such that
\begin{equation}
\label{e:folner-s}
\lim_{i \in I} \frac{\vert F_i \setminus  F_i g\vert}{\vert F_i \vert} = 0 \text{  for all } g \in G. 
\end{equation}

\par 

In \cite{cscp-alg-goe}, algebraic mean dimension is introduced as an analogue of topological and measure-theoretic entropy, as well as  various notions of mean dimension studied by Gromov in~\cite{gromov-esav}.

\begin{definition}
\label{def:mean-dim}
Let $G$ be an amenable group and let $\FF=(F_i)_{i \in I}$ be a F\o lner net for $G$.  
Let $X$ be an algebraic variety over an algebraically closed field $K$ and 
let $A \coloneqq X(K)$. 
The \emph{algebraic mean dimension}  of a subset
$\Gamma \subset A^G$ with respect to  $\FF$ 
is the quantity $\mdim_\FF(\Gamma)$
defined by
\begin{equation}  
\label{e;mdim}
\mdim_\FF(\Gamma) \coloneqq  \limsup_{i \in I} \frac{\dim(\Gamma_{F_i})}{| F_i |},
\end{equation}
where $\dim(\Gamma_{F_i})$ denotes the Krull dimension of $\Gamma_{F_i}= \{x\vert_{F_i}\colon x \in \Gamma\} \subset A^{F_i}$ with respect to the Zariski topology
and $|\cdot|$ denotes cardinality.
\end{definition}

We shall need the following technical lemma in Section~\ref{s:myhill} and Section~\ref{s:moore}. 
\begin{lemma}
\label{l:inv-mdim-non-max}
Let $G$ be an amenable group and let $\FF=(F_i)_{i \in I}$ be a F\o lner net for $G$.  
Let $X$ be an algebraic variety over an algebraically closed field $K$ and 
let $A\coloneqq X(K)$. 
Suppose that  $\Gamma \subset A^G$ satisfies the following condition:
\begin{enumerate}[{\rm (C)}]
\item
there exist finite subsets $E,E' \subset G$ and an $(E,E')$-tiling $T \subset G$ such that for all $g \in T$,  
$\Gamma_{gE} \subsetneqq A^{gE}$ 
is a proper closed subset of $A^{gE}$ for the Zariski topology.  
\end{enumerate} 
Then one has $\mdim_\FF(\Gamma) < \dim(X)$.
\end{lemma}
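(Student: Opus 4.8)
The plan is to prove the quantitative bound $\mdim_\FF(\Gamma)\le\dim(X)-|E'|^{-1}$. Fix the finite sets $E,E'\subseteq G$ and the $(E,E')$-tiling $T\subseteq G$ provided by Condition~(C), write $d\coloneqq\dim(X)$, and for each $i\in I$ put $T_i\coloneqq\{g\in T:gE\subseteq F_i\}$ and $N_i\coloneqq|T_i|$. First I would bound $\dim(\Gamma_{F_i})$ from above in terms of $N_i$. The blocks $gE$, $g\in T_i$, are pairwise disjoint subsets of $F_i$, so restriction gives a product decomposition $A^{F_i}=\big(\prod_{g\in T_i}A^{gE}\big)\times A^{F_i\setminus\bigcup_{g\in T_i}gE}$ under which the image of $\Gamma_{F_i}$ in the first factor lies in $\prod_{g\in T_i}\Gamma_{gE}$. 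Applying the fibre-dimension inequality to the coordinate projection onto $\prod_{g\in T_i}A^{gE}$, together with $\dim(Y\times Z)=\dim Y+\dim Z$ for varieties over $K$, yields
\[
\dim(\Gamma_{F_i})\ \le\ \sum_{g\in T_i}\dim(\Gamma_{gE})\ +\ \big(|F_i|-N_i|E|\big)\,d.
\]
The crux, which I return to below, is that each proper closed subset $\Gamma_{gE}\subsetneq A^{gE}$ satisfies $\dim(\Gamma_{gE})\le|E|d-1$; granting this, the displayed inequality collapses to $\dim(\Gamma_{F_i})\le|F_i|\,d-N_i$.

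Next I would get a lower bound $N_i\ge|F_i|/|E'|-o(|F_i|)$ from amenability. By property~(T-2), $F_i\subseteq\bigcup_{g\in T}gE'$, whence $|F_i|\le|E'|\cdot\#\{g\in T:gE'\cap F_i\neq\emptyset\}$, so at least $|F_i|/|E'|$ tiles have $gE'\cap F_i\neq\emptyset$. Such a $g$ lies in $T_i$ unless $ge'\in F_i$ while $ge\notin F_i$ for some $e'\in E'$, $e\in E$; writing $h\coloneqq ge'$, this forces $h\in\bigcup_{e\in E,\,e'\in E'}\big(F_i\setminus F_i(e^{-1}e')\big)$, a set of size $o(|F_i|)$ by the F\o lner condition~\eqref{e:folner-s} (a finite union of sets each of relative size $\to 0$), and for each such $h$ there are at most $|E'|$ candidates $g=he'^{-1}$. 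Hence at most $o(|F_i|)$ of the tiles with $gE'\cap F_i\neq\emptyset$ are exceptional, so $N_i\ge|F_i|/|E'|-o(|F_i|)$ and $\liminf_i N_i/|F_i|\ge|E'|^{-1}$. Combining with the bound of the first paragraph,
\[
\mdim_\FF(\Gamma)=\limsup_i\frac{\dim(\Gamma_{F_i})}{|F_i|}\ \le\ d-\liminf_i\frac{N_i}{|F_i|}\ \le\ d-\frac1{|E'|}\ <\ \dim(X).
\]

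The step I expect to be the genuine obstacle is the per-tile drop $\dim(\Gamma_{gE})\le|E|\dim(X)-1$. When $X$ is connected it is immediate: a connected algebraic group over $K$ is irreducible, hence so is each power $A^{gE}\cong X(K)^{|E|}$, and a proper closed subset of an irreducible variety of dimension $|E|d$ has dimension at most $|E|d-1$. When $X$ is disconnected this fails at face value — $A^{gE}$ is only equidimensional, and a proper closed subset may contain an entire top-dimensional component — so the drop has to be recovered by separating the finite ``component-type'' information from the geometry: one passes to the induced cellular automaton on the full shift $\pi_0(X)^{G}$ over the finite alphabet $\pi_0(X)$ (Section~\ref{s:induced-map-connected-component}), and on the positive proportion of tiles where $\Gamma$ is genuinely constrained argues either with the finite-alphabet input or with the geometry inside a fixed fibre $\cong(X^0)^{gE}$, after which the F\o lner counting of the second paragraph applies to that subfamily of tiles. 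The counting itself is routine but should be carried out with attention to left versus right translates, since \eqref{e:folner-s} is phrased for right translation.
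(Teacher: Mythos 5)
Your main construction is sound and is essentially the argument behind the paper's proof, which is simply a citation to \cite[Lemma~5.2]{cscp-alg-goe}: the inclusion $\Gamma_{F_i}\subseteq\bigl(\prod_{g\in T_i}\Gamma_{gE}\bigr)\times A^{F_i\setminus\bigcup_{g\in T_i}gE}$ (with closures taken where needed) gives $\dim(\Gamma_{F_i})\le\sum_{g\in T_i}\dim(\Gamma_{gE})+(|F_i|-N_i|E|)\dim(X)$, and your F\o lner count $N_i\ge |F_i|/|E'|-o(|F_i|)$ is carried out correctly, including the left/right bookkeeping with \eqref{e:folner-s}. So whenever each tile satisfies the dimensional drop $\dim(\Gamma_{gE})\le |E|\dim(X)-1$, you do get the quantitative bound $\mdim_\FF(\Gamma)\le\dim(X)-1/|E'|$, which proves the lemma in that situation (in particular whenever $X$ is irreducible).

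The genuine gap is exactly the step you flag, and the repair you sketch cannot work. The lemma concerns an arbitrary subset $\Gamma\subset A^G$: no cellular automaton appears in the hypotheses, so there is nothing to which the induced map on $X_0^G$ from Section~\ref{s:induced-map-connected-component} can be applied, and condition (C) does not supply any ``positive proportion of genuinely constrained tiles'' in the component shift. In fact, with the hypothesis ``proper closed subset'' and $X$ reducible, the statement is false as written: take $X$ a two-point variety (so $\dim(X)=0$), $\Gamma=\{c\}$ a single constant configuration, $E=E'=\{1_G\}$ and $T=G$; then every $\Gamma_{gE}$ is a proper closed subset of $A^{gE}$, yet $\mdim_\FF(\Gamma)=0=\dim(X)$. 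So no component-type analysis can recover the per-tile drop from properness alone; the correct reading is the one with the strengthened hypothesis $\dim(\Gamma_{gE})<\dim(A^{gE})$ (equivalently, $X$ irreducible as in the cited source), and this is the only form the paper ever uses: in Theorem~\ref{t:myhill-ca-algr-full} the lemma is invoked when $\dim\Gamma_E<\dim X^E$, and in Case~1 of Theorem~\ref{t:moore-ca-algr-full} the text explicitly notes that the hypothesis holds because $\dim H_t<\dim A^E$. Under that reading your first two paragraphs already constitute a complete proof; the third paragraph should be replaced by this strengthening of the hypothesis rather than an attempted reduction to $X_0$.
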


\begin{proof}
See \cite[Lemma~5.2]{cscp-alg-goe}. 
\end{proof}

\subsection{Strongly irreducible subshifts}
\label{s:alg-ca}
Let $G$ be a group and let $A$ be a set. A   \emph{subshift} of $A^G$ is a $G$-invariant subset. A subshift of $A^G$ is called a \emph{closed subshift} if it is closed in $A^G$ with respect to the prodiscrete topology. 
\par 
We say that a subshift $\Sigma \subset A^G$ is \emph{strongly irreducible} if there exists a finite subset $\Delta \subset G$ such that for all finite subsets $E , F \subset G$ with $E \Delta \cap F= \varnothing$ and $x, y \in \Sigma$, there exists $z \in \Sigma$ such that $z\vert_{E}= x\vert_{E}$ and $z\vert_{F}=y\vert_{F}$. 


\subsection{Algebraic subshifts and algebraic cellular automata}  

Let $G$ be a group. Let $X$ be an algebraic variety over an 
algebraically closed field $K$. 
\par 
For every finite subset $D \subset G$ and algebraic subvariety $W \subset A^D$, the set 
$$
\Sigma(A^G; W, D)= \{ x \in A^G\colon (gx)\vert_{D}\in W, \mbox{ for all }g \in G \}
$$
is a closed subshift of $A^G$ and 
is called an \emph{algebraic subshift of finite type}.
\par
Following \cite{cscp-alg-ca}, the set $CA_{alg}(G,X,K)$ of \emph{algebraic cellular automata} consists of cellular automata $\tau \colon A^G \to A^G$ which  admit a memory set $M$ 
with local defining map $\mu \colon A^M \to A$ induced by some $K$-morphism of algebraic varieties 
$f \colon X^M \to X$, i.e., $\mu=f\vert_{A^M}$. 
\par 
Let $\Lambda \subset A^G$ be a subshift. If 
 $\Lambda= \tau(\Sigma)$ for some  $\tau \in CA_{alg}(G,X,K)$ and an algebraic subshift of finite type $\Sigma \subset A^G$ then we call $\Lambda$ an \emph{algebraic sofic subshift} (cf.~\cite{cscp-invariant-ca-alg}). See also  \cite{cscp-jpaa}, \cite{phung-dcds} for the simpler linear case.  

\section{Induced maps on the set of connected components}
\label{s:induced-map-connected-component}

Let us fix an algebraically closed field $K$. 
For every $K$-algebraic variety $U$, we denote by $U_0$ the finite set of connected components of $U$ 
and let $i_U \colon U \to U_0$ be the map 
sending every point $u \in U$ to the connected component of $U$ which contains $u$. It is clear that $(U^n)_0= (U_0)^n$ for every $n\in \N$. 
\par 
If $\pi \colon R \to T$ is a morphism of $K$-algebraic varieties, we denote also by $\pi_0 \colon R_0 \to T_0$ the induced map which sends every $p \in R_0$ to $q_0 \in T$ where $q_0$ is the connected component of $T$ containing $\pi(u)$ for any point $u \in R$ that belongs to $p_0$. 
\par 
Remark that $\pi_0$ is well-defined since the image of every connected component of $R$ under $\pi$ is connected. Moreover, it is immediate from the definition that 
\begin{equation}
\label{e:symbolic-map-functorial}
    i_T \circ \pi = \pi_0 \circ i_R.
\end{equation}
 If in addition the map $\pi$ is surjective then clearly $\vert T_0 \vert  \leq \vert R_0 \vert $. 
\par 
Now let $X$ be a $K$-algebraic variety.  
Suppose that $G$ is a group and that $\tau \colon X(K)^G \to X(K)^G$ is an algebraic cellular automaton with an algebraic local defining map $f\colon X^M \to X$ for some finite symmetric subset $M \subset G$, i.e., $M=M^{-1}$. 
Then we obtain a well-defined cellular automaton $\tau_0 \colon X_0^G \to X_0^G$ admitting $f_0 \colon X_0^M \to X_0$ as a local defining map: 
\begin{equation} 
    \tau_0(c)(g)= f_0((g^{-1}c)\vert_M)
\end{equation} 
for all $c \in X_0^G$ and $g \in G$. Let $i_{X^G} \colon X^G \to X_0^G$ be the induced map $i_{X^G}= \prod_G i_X$. Then it is clear that 
\begin{equation}
\label{e:symbolic-induced-map-functorial-ca-alg-1} 
        i_{X^G} \circ \tau = \tau_0 \circ i_{X^G}. 
\end{equation}

\par
Moreover, we infer from the relation \eqref{e:symbolic-map-functorial} the functoriality of our construction of induced cellular automata: for all $\tau, \sigma \in CA_{alg}(G,X,K)$, we have 
\begin{equation}
    \label{e:functorial-induced-symbolic-ca-alg} 
    (\sigma \circ \tau)_0 = \sigma_0 \circ \tau_0. 
\end{equation}
\par 
Indeed, let $f\colon X^M \to X$ and let $h \colon X^M \to X$ be respectively the algebraic local defining maps of $\tau$ and $\sigma$ for some finite memory set $M \in G$. Let $f^+_M \colon X^{M^2} \to X^M$ be the induced map given by 
$f^+_M(c)(g)= f((g^{-1}c)\vert_M)$ for every $c \in A^{M^2}$ and $g \in M$.. Then $h \circ f^+_M\colon X^{M^2} \to X$ is an algebraic local defining map of $\sigma \circ \tau$ associating with the memory set $M^2$. Since 
$$
(h \circ f^+_M)_0 = h_0 \circ (f^+_M)_0, 
$$
we deduce without difficulty that $(\sigma \circ \tau)_0 = \sigma_0 \circ \tau_0$.

\begin{lemma}
\label{l:induced-map-homo-1} Let $\pi \colon R \to T$ be a homomorphism of algebraic groups over $K$. 
Then the induced map $\pi_0 \colon R_0 \to T_0$ is naturally a homomorphism of groups such that $i_T \circ \pi = \pi_0 \circ i_R$. 
\end{lemma}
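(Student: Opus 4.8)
The plan is to first equip $R_0$ with its natural group structure — that of the component group $R/R^{\circ}$ — and then to read off the homomorphism property of $\pi_0$ from that of $\pi$ by functoriality of the construction $(-)_0$.

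First I would record the group structure on $R_0$. Recall from the discussion above that $(U^n)_0 = (U_0)^n$ for any $K$-algebraic variety $U$; more generally, since $K$ is algebraically closed, a product of connected $K$-varieties is connected, so the connected components of a finite product are the products of the connected components of the factors. In particular $(R\times R)_0 = R_0\times R_0$. Let $m_R\colon R\times R\to R$ and $\iota_R\colon R\to R$ be the multiplication and inversion morphisms of $R$. Being $K$-morphisms, they induce maps $(m_R)_0\colon R_0\times R_0\to R_0$ and $(\iota_R)_0\colon R_0\to R_0$, and together with the point $i_R(1_R)\in R_0$ these constitute a composition law on $R_0$. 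Its well-definedness is precisely the elementary fact — already used to define $\pi_0$ — that a morphism carries each connected component into a single connected component. That this composition law satisfies the group axioms follows by applying $(-)_0$ to the commutative diagrams of $K$-morphisms encoding associativity, the unit law, and the inverse law for $R$; here one uses the functoriality $(\psi\circ\phi)_0 = \psi_0\circ\phi_0$, which is immediate from \eqref{e:symbolic-map-functorial} together with the surjectivity of $i_R$ (exactly as \eqref{e:functorial-induced-symbolic-ca-alg} was obtained above). With this structure $i_R\colon R\to R_0$ is a group homomorphism, being \eqref{e:symbolic-map-functorial} applied to $m_R$. The same discussion applies verbatim to $T$.

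Next I would deduce that $\pi_0$ is a homomorphism. Since $\pi$ is a homomorphism of algebraic groups, $\pi\circ m_R = m_T\circ(\pi\times\pi)$ as $K$-morphisms $R\times R\to T$. Applying $(-)_0$, using functoriality and the identification $(\pi\times\pi)_0 = \pi_0\times\pi_0$, gives $\pi_0\circ(m_R)_0 = (m_T)_0\circ(\pi_0\times\pi_0)$, i.e. $\pi_0$ respects multiplication; moreover $\pi_0(i_R(1_R)) = i_T(\pi(1_R)) = i_T(1_T)$ by \eqref{e:symbolic-map-functorial}, so $\pi_0$ preserves the unit as well. Hence $\pi_0\colon R_0\to T_0$ is a group homomorphism. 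The identity $i_T\circ\pi = \pi_0\circ i_R$ is simply \eqref{e:symbolic-map-functorial}.

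I do not anticipate a genuine obstacle: the whole argument is bookkeeping with the functor $(-)_0$. The only point deserving care is the verification that the composition law on $R_0$ is well defined and satisfies the group axioms, which amounts to transporting the defining diagrams of the algebraic group $R$ through $(-)_0$; everything else is a direct consequence of \eqref{e:symbolic-map-functorial}.
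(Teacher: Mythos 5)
Your proposal is correct and follows essentially the same route as the paper: both endow $R_0$ and $T_0$ with the group structures induced by the multiplication, inversion, and identity component of $R$ and $T$, and then deduce that $\pi_0$ is a homomorphism from the compatibility relation $i_T\circ\pi=\pi_0\circ i_R$. Your version merely spells out, via functoriality of $(-)_0$ applied to the defining diagrams, the verification that the paper dismisses as ``not hard to see.''
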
 

\begin{proof}
First, we check that $R_0$ and $T_0$ inherit naturally a group structure from $R$ and $T$ respectively. Consider for example the multiplication 
homomorphism $m \colon R \times R \to R$. Then the multiplication homomorphism $m_0 \colon R_0 \times R_0 \to R_0$ is defined by sending $(p,q) \in R_0 \times R_0$ to $r \in R_0$ where $r$ is the connected component of $R$ which contains the point $xy=m(x,y)$ for any $x \in p$ and $y \in q$. The inverse map $\iota_0\colon R_0 \to R_0$ is defined similarly. Let $\varepsilon \in R_0$ be the connected component which contains $e_R$. Then it is not hard to see that with $\varepsilon$ as the neutral element, $R_0$ is naturally a group with the multiplication map $m_0$ and the inverse map $\iota_0$. The group structure of $T_0$ is defined similarly. 
\par 
Therefore, it follows immediately from the relation~\eqref{e:symbolic-map-functorial} and the definition of the group structures of $R_0$ and $T_0$ that $\pi_0$ is a group homomorphism. 
\end{proof}

\begin{lemma}
\label{l:symbolic-map-conn-also-algr} 
Let $G$ be a group and 
let $X$ be an algebraic group over $K$. 
Suppose that  $\tau \in CA_{algr}(G,X,K)$. Then the induced cellular automaton $\tau_0 \colon X_0^G \to X_0^G$ is a group cellular automaton. 
\end{lemma}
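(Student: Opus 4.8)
The plan is to reduce the statement to Lemma~\ref{l:induced-map-homo-1} together with the elementary fact that a cellular automaton whose local defining map is a group homomorphism is a group cellular automaton. Since $\tau \in CA_{algr}(G,X,K)$, it admits a memory set $M$ and a local defining map $\mu = f\vert_{A^M}$ for some homomorphism of algebraic groups $f \colon X^M \to X$. Replacing $M$ by $M \cup M^{-1} \cup \{1_G\}$ and composing $f$ with the corresponding coordinate projection if necessary, we may and do assume that $M$ is symmetric, so that the induced cellular automaton $\tau_0 \colon X_0^G \to X_0^G$ with local defining map $f_0 \colon X_0^M \to X_0$ is exactly the object constructed in Section~\ref{s:induced-map-connected-component}; here we use the canonical identification $(X^M)_0 = (X_0)^M = X_0^M$.

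Next, I would invoke Lemma~\ref{l:induced-map-homo-1} for the homomorphism of algebraic groups $f \colon X^M \to X$: it gives that $f_0 \colon (X^M)_0 \to X_0$ is a homomorphism of (finite) groups, where $X_0$ carries the group structure provided by that lemma. One small point to record is that the group structure on $(X^M)_0$ coming from Lemma~\ref{l:induced-map-homo-1} coincides with the product group structure on $(X_0)^M = X_0^M$; this is immediate, because multiplication and inversion on $X^M$ are defined coordinatewise, hence so are the induced operations on $(X^M)_0$. Thus $f_0 \colon X_0^M \to X_0$ is a group homomorphism in the expected sense.

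Finally, it remains to check that $\tau_0$ is a group cellular automaton, i.e.\ that it is a group homomorphism for the pointwise group structure on $X_0^G$. For $c, d \in X_0^G$ and $g \in G$ one computes
\[
\tau_0(cd)(g) = f_0\big((g^{-1}c)\vert_M \cdot (g^{-1}d)\vert_M\big) = f_0\big((g^{-1}c)\vert_M\big)\, f_0\big((g^{-1}d)\vert_M\big) = \tau_0(c)(g)\,\tau_0(d)(g),
\]
where the first equality holds because the Bernoulli shift by $g^{-1}$ and the restriction $A^G \to A^M$ are group homomorphisms for the pointwise structures, and the second holds because $f_0$ is a homomorphism. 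Hence $\tau_0(cd) = \tau_0(c)\,\tau_0(d)$, so $\tau_0$ is a group cellular automaton.

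I do not expect a genuine obstacle in this proof; it is essentially bookkeeping built on Lemma~\ref{l:induced-map-homo-1}. The only points demanding a little care are the reduction to a symmetric memory set (so that $\tau_0$ is literally the cellular automaton constructed in Section~\ref{s:induced-map-connected-component}) and the verification that the two a priori different group structures on $X_0^M$ — the product of copies of the group structure on $X_0$, and the one induced on $(X^M)_0$ via Lemma~\ref{l:induced-map-homo-1} — agree, which follows at once from the componentwise nature of the group operations on $X^M$.
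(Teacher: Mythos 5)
Your proof is correct and follows essentially the same route as the paper: pass to the induced local map $f_0 \colon X_0^M \to X_0$, apply Lemma~\ref{l:induced-map-homo-1} to see it is a group homomorphism, and conclude that $\tau_0$ is a group cellular automaton. The paper's own proof is just a terser version of yours, leaving implicit the bookkeeping you spell out (symmetrizing the memory set, identifying $(X^M)_0$ with the product group $X_0^M$, and the pointwise homomorphism check for $\tau_0$), all of which you handle correctly.
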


\begin{proof}
Since $\tau \in  CA_{algr}(G,X,K)$, it admits an algebraic local defining map $f\colon X^M \to X$ for some finite subset $M \subset G$. Then the induced map $f_0 \colon X_0^M \to X_0$ is a local defining map of the cellular automaton $\tau_0 \colon X_0^G \to X_0^G$. By Lemma~\ref{l:induced-map-homo-1}, $f_0$ is a homomorphism of groups and we deduce that 
 $\tau_0$ is a group cellular automaton. 
\end{proof}

\section{Weak pre-injectivity} 
\label{s:weak-pre-injectivity}

We recall the following two notions of weak pre-injectivity introduced in \cite[Definition~8.1]{phung-2020}. 

\begin{definition}
\label{d:weak-pre}
Let $G$ be a group. 
Let $X$ be a $K$-algebraic group with neutral element $e$ and let $A=X(K)$. 
Let $\tau \in CA_{algr}(G,X,K)$. 
If $D\subset A^\Omega$ for some finite subset $\Omega \subset G$, we write
\[ 
D_e\coloneqq D \times \{e \}^{G \setminus \Omega} \subset A^G. 
\] 
\begin{enumerate}  [\rm (a)]
\item 
$\tau$ is \emph{$(\sbullet[.9])$-pre-injective} if
there do not exist a finite subset $\Omega \subset G$
 and a Zariski closed subset $H \subsetneq A^\Omega $ such that
 \[
 \tau((A^\Omega)_e)=\tau(H_e). 
 \]
\item 
$\tau$ is \emph{$( \sbullet[.9] \sbullet[.9]) $-pre-injective} if
for every finite subset $\Omega \subset G$, we have 
 \[
 \dim(\tau((A^\Omega)_e)) = \dim(A^\Omega). 
 \]
 \end{enumerate} 
\end{definition}

We establish first the following lemma. 

\begin{lemma}
\label{l:moore-lamma-ca-algr-full-1} 
Let $f\colon X \to Y$ be a  homomorphism of algebraic groups over a field $K$.  
Suppose that $\dim X > \dim f(X)$.  
Then there exists a closed subset $ Z \subsetneq X$ such that $\dim Z < \dim X$ and $f(Z)=f(X)$.  
\end{lemma}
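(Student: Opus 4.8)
The plan is to pass from $f$ to its fibres, dispose of the trivial case where $f(X)$ is finite, and otherwise obtain $Z$ as a general hyperplane section of $X$ together with a correction term supported over the lower‑dimensional locus where the section fails to be onto. First I would set $N\coloneqq\ker f$ (a closed subgroup of $X$) and replace $Y$ by the closed subgroup $f(X)$, using that the image of a homomorphism of algebraic groups is closed. Since every non‑empty fibre of $f\colon X\to f(X)$ is a coset of $N$, all fibres have dimension $\dim N$, whence $\dim X=\dim f(X)+\dim N$ and, by hypothesis, $n_0\coloneqq\dim N=\dim X-\dim f(X)\geq 1$. Moreover the connected components of $X$ are translates of the identity component, so $X$ has pure dimension $\dim X$; consequently any closed subset of dimension $<\dim X$ is automatically a proper subset, and it suffices to construct a \emph{closed} $Z\subseteq X$ with $\dim Z<\dim X$ and $f(Z)=f(X)$.

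\textbf{The case $\dim f(X)=0$.} Here $f(X)$ is a finite set $\{y_1,\dots,y_r\}$ and $\dim X=n_0\geq 1$. Choosing $x_i\in X$ with $f(x_i)=y_i$ and taking $Z\coloneqq\{x_1,\dots,x_r\}$ already works: $Z$ is closed, $\dim Z=0<\dim X$, and $f(Z)=f(X)$.

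\textbf{The case $\dim f(X)\geq 1$.} Using that an algebraic group is quasi‑projective, I would fix a locally closed immersion $X\hookrightarrow\Proj^m_K$ and, for a hyperplane $H\subseteq\Proj^m_K$, set $S_H\coloneqq\overline{X\cap H}$ (closure in $X$). For a general $H$ one has $\dim S_H=\dim X-1$, and the plan is to show in addition that $f|_{S_H}$ is dominant onto every irreducible component of $f(X)$. Granting this, $f(S_H)$ is constructible (Chevalley) and dense in $f(X)$, hence contains a dense open subset $U_0\subseteq f(X)$; then $C_0\coloneqq f(X)\setminus U_0$ is closed with $\dim C_0\leq\dim f(X)-1$, so $f^{-1}(C_0)$ is closed in $X$ of dimension at most $\dim C_0+n_0\leq\dim X-1$ (every fibre of $f$ has dimension $n_0$). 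Taking $Z\coloneqq S_H\cup f^{-1}(C_0)$ gives a closed subset of dimension $\leq\dim X-1<\dim X$ with $f(Z)\supseteq U_0\cup C_0=f(X)$, as required.

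\textbf{The main obstacle.} What remains, and what I expect to be the delicate step, is the dominance claim: that a general hyperplane section of $X$ still dominates each component of $f(X)$. The point is that for a component $Y_j$ of $f(X)$ with generic point $\eta_j$ the fibre $f^{-1}(\eta_j)$ is a locally closed subvariety of $\Proj^m_{\kappa(\eta_j)}$ of dimension $n_0\geq 1$, so its projective closure meets every hyperplane; since the boundary of this fibre in its closure has dimension $\leq n_0-1$, an incidence‑variety dimension count shows that for $H$ general \emph{over $K$} the section of the closure by $H$ is not contained in that boundary, i.e.\ $f^{-1}(\eta_j)\cap H\neq\varnothing$. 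As there are only finitely many components $Y_j$, a single general $H$ meets all of them (and simultaneously satisfies $\dim S_H=\dim X-1$), and then $S_H$ meets the generic fibre of $f$ over each $Y_j$, yielding the claim. The subtle feature is precisely that one must intersect the non‑proper generic fibre, not merely its closure; this is also exactly why a bare hyperplane section does not by itself suffice and the correction term $f^{-1}(C_0)$ is needed.
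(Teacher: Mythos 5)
Your proposal is correct in substance but takes a genuinely different route from the paper. The paper's proof is a reduction: write $X=\bigcup_{i\in I}X_i$ as its connected components, note that $f(X)$ is an algebraic group whose components all have dimension $\dim f(X)$, so each restriction $f_i\colon X_i\to f(X_i)$ is a surjective morphism of \emph{irreducible} varieties with a strict dimension drop, invoke \cite[Lemma~8.2]{phung-2020} to get a proper closed $Z_i\subsetneq X_i$ with $f_i(Z_i)=f(X_i)$, and take $Z=\bigcup_i Z_i$. You bypass the component-wise reduction entirely: using that $f(X)$ is a closed subgroup, that all fibres are cosets of $\ker f$ of constant dimension $n_0=\dim X-\dim f(X)\geq 1$, and that $X$ is equidimensional, you construct $Z$ directly as a general hyperplane section $S_H$ plus the correction term $f^{-1}(C_0)$ over the small locus not dominated by $S_H$. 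In effect you give a self-contained proof of the input the paper outsources to \cite{phung-2020}, and one that does not need irreducibility; the price is the delicate step you flag, namely that a hyperplane general over $K$ meets the generic fibre $f^{-1}(\eta_j)$, which lives over the extension field $\kappa(\eta_j)$. That step does go through along the lines you indicate: the hyperplanes over $\kappa(\eta_j)$ containing some $(n_0-1)$-dimensional component of the boundary $\overline{f^{-1}(\eta_j)}\setminus f^{-1}(\eta_j)$ form a finite union of proper linear subspaces of the dual projective space, and such a proper closed subset defined over an extension field cannot contain a Zariski-dense family of $K$-rational hyperplanes (equivalently, spreading the boundary out over a dense open of $Y_j$, the incidence variety of bad pairs $(y,H)$ has dimension at most $\dim Y_j+m-n_0<\dim Y_j+m$); for $H$ outside this bad locus one has $\dim(\partial\cap H)\leq n_0-2$ while every component of $\overline{f^{-1}(\eta_j)}\cap H$ has dimension at least $n_0-1$, so the open fibre is met, and finitely many such conditions can be imposed simultaneously. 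The only caveat is that the general-hyperplane argument requires $K$ infinite, which is harmless since everywhere else in the paper (and in the source of the paper's own key lemma) $K$ is algebraically closed.
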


\begin{proof}
Let us write $X= \cup_{i \in I} X_i$ as a disjoint  union of connected components of $X$ where $I$ is a finite set. For each $i \in I$, consider the restriction algebraic morphism $f_i=f\vert_{X_i} \colon X_i  \to f(X_i)$. 
\par 
By milne, we know that the image $f(X)$ is an algebraic group. It follows that $f_i(X_i)$ is a connected component of $f(X)$ for every $i \in I$. 
Note that every connected component of an algebraic group is also an irreducible component and has the same dimension as the dimension of the algebraic group. 
The morphisms $f_i \colon X_i \to f(X_i)$ are surjective morphisms of irreducible algebraic varieties such that $\dim X_i > \dim f(X_i)$.  Hence, \cite[Lemma 8.2]{phung-2020} implies that for every $i \in I$, there exists a proper closed subset $Z_i \subsetneq X_i$ such that $f_i(Z_i) = f(X_i)$. In particular, since $X_i$ is irreducible, it follows that $\dim Z_i < \dim X_i$ for every $i \in I$. 
\par 
Let $Z= \cup_{i \in I} Z_i \subset X$ then we find by construction that 
$$
f(Z)= \cup_{i \in I} f(Z_i) = \cup_{i \in I} f(X_i)= f(X) 
$$
and clearly 
$$\dim Z= \max_{i \in I} \dim Z_i <\max_{i \in I} X_i = \dim X. 
$$
\par 
Therefore, $Z$ verifies the desired properties and the proof of the lemma is thus complete. 
\end{proof} 
\par 
Lemma~\ref{l:moore-lamma-ca-algr-full-1} allows us to show the following general logical implication in the class $CA_{algr}$: 
\[
( \sbullet[.9])\mbox{-pre-injectivity} \implies (\sbullet[.9] \sbullet[.9])\mbox{-pre-injectivity}.
\]
\begin{corollary} 
\label{c:moore-one-star-implies-two-star-pre-injectivity}
Let $G$ be a group and let $X$ be an algebraic group over $K$. 
Let $\tau\in CA_{algr}(G,X,K)$. Suppose that $\tau$ is 
$( \sbullet[.9]) $-pre-injective. Then $\tau$ is also 
$( \sbullet[.9] \sbullet[.9]) $-pre-injective. 
\end{corollary}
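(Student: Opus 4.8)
The plan is to prove the contrapositive: assuming $\tau$ is not $(\sbullet[.9]\sbullet[.9])$-pre-injective, I will show it is not $(\sbullet[.9])$-pre-injective. By Definition~\ref{d:weak-pre} there is a finite subset $\Omega\subset G$ with $\dim(\tau((A^\Omega)_e))\neq\dim(A^\Omega)$, and the goal is to produce a \emph{proper} Zariski closed subset $H\subsetneq A^\Omega$ with $\tau(H_e)=\tau((A^\Omega)_e)$, which directly contradicts Definition~\ref{d:weak-pre}(a).

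The first step is to realize the restriction of $\tau$ to configurations supported on $\Omega$ as a homomorphism of algebraic groups between finite products of $X$. Fix a memory set $M$ of $\tau$ together with a homomorphism of algebraic groups $f\colon X^M\to X$ inducing its local defining map, and set $\Omega'\coloneqq\Omega M^{-1}$. For each $g\in\Omega'$, the map $X^\Omega\to X^M$ whose $m$-th coordinate ($m\in M$) is the projection onto the factor indexed by $gm$ when $gm\in\Omega$ and the constant homomorphism with value $e$ otherwise is a homomorphism of algebraic groups; composing with $f$ and letting $g$ range over $\Omega'$ yields a homomorphism of algebraic groups $\Phi\colon X^\Omega\to X^{\Omega'}$ whose induced map on $K$-points is $c\mapsto\tau(c_e)\vert_{\Omega'}$. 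Because $f$ sends the identity tuple to $e$, one has $\tau(c_e)(g)=e$ for every $g\notin\Omega'$ and every $c\in A^\Omega$, so $\tau((A^\Omega)_e)\subseteq(A^{\Omega'})_e$, and under the canonical identification $(A^{\Omega'})_e\cong A^{\Omega'}$ we get $\tau((A^\Omega)_e)=\Phi(A^\Omega)$. Since $\Phi$ is a morphism of varieties, $\dim(\tau((A^\Omega)_e))=\dim\Phi(X^\Omega)\leq\dim X^\Omega=\dim(A^\Omega)$; as equality fails by assumption, we conclude $\dim X^\Omega>\dim\Phi(X^\Omega)$.

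The second step applies Lemma~\ref{l:moore-lamma-ca-algr-full-1} to the homomorphism $\Phi\colon X^\Omega\to X^{\Omega'}$, which is legitimate precisely because $\dim X^\Omega>\dim\Phi(X^\Omega)$. This gives a closed subset $Z\subsetneq X^\Omega$ with $\dim Z<\dim X^\Omega$ and $\Phi(Z)=\Phi(X^\Omega)$; in particular $Z$ is proper, so $H\coloneqq Z(K)$ is a proper Zariski closed subset of $A^\Omega$. Using once more the identification $(A^{\Omega'})_e\cong A^{\Omega'}$, we obtain $\tau(H_e)=\Phi(Z)=\Phi(X^\Omega)=\tau((A^\Omega)_e)$, which contradicts the $(\sbullet[.9])$-pre-injectivity of $\tau$ and completes the argument.

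The only substantive point is the bookkeeping of the first step: checking that restricting $\tau$ to the finite window $\Omega'$ genuinely exhibits $\tau\vert_{(A^\Omega)_e}$ as the $K$-points of a homomorphism of algebraic groups $X^\Omega\to X^{\Omega'}$ (so that the ``Moore lemma'' Lemma~\ref{l:moore-lamma-ca-algr-full-1} applies), and that $\dim(\tau((A^\Omega)_e))$ is exactly the dimension of the image of this homomorphism. Once that is in place, the corollary is an immediate consequence of Lemma~\ref{l:moore-lamma-ca-algr-full-1} and Definition~\ref{d:weak-pre}.
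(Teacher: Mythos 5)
Your proof is correct and follows essentially the same route as the paper: assume failure of $(\sbullet[.9]\sbullet[.9])$-pre-injectivity, deduce a strict dimension drop for the image of $(A^\Omega)_e$, and invoke Lemma~\ref{l:moore-lamma-ca-algr-full-1} to manufacture a proper closed $H\subsetneq A^\Omega$ with $\tau(H_e)=\tau((A^\Omega)_e)$, contradicting $(\sbullet[.9])$-pre-injectivity. The only difference is that you spell out explicitly (via the window $\Omega'=\Omega M^{-1}$ and the homomorphism $\Phi\colon X^\Omega\to X^{\Omega'}$) why $\tau\vert_{(A^\Omega)_e}$ is realized by a homomorphism of algebraic groups so the lemma applies, a bookkeeping step the paper leaves implicit.
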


\begin{proof}
Suppose on the contrary that $\tau$ is not $( \sbullet[.9] \sbullet[.9]) $-pre-injective. Then we can find a finite subset $E \subset G$ such that $\dim \tau((A^E)_e) <  \dim A^E$. Hence, we infer from Lemma~\ref{l:moore-lamma-ca-algr-full-1}  that there exists a closed subset $Z \subset A^E$ such that $\dim Z < \dim A^E$ and that $\tau((A^E)_e)=\tau(Z_e)$. Since $\dim Z < \dim A^E$, we have $Z \subsetneq A^E$ and we can thus  conclude that $\tau$ is not $( \sbullet[.9]) $-pre-injective, which is a contradiction. The proof is thus complete. 
\end{proof}

\par 
Let $X$ be a connected algebraic group over an algebraically closed field $K$ and let $G$ be a group. 
Let $\tau\in CA_{algr}(G,X,K)$. Then 
it was shown in \cite[Proposition~8.3]{phung-2020} that $\tau$ is {$( \sbullet[.9]) $-pre-injective} if and only if it is 
 {$(\sbullet[.9] \sbullet[.9])$-pre-injective}. 
However, the following result shows that the converse of Corollary~\ref{c:moore-one-star-implies-two-star-pre-injectivity} fails as soon as the alphabet is not a connected algebraic group.  

\begin{proposition}
\label{p:moore-one-star-no-implied-by-two-star-pre-injectivity} 
Let $G$ be a group and let $K$ be an algebraically closed field. Then there exist an algebraic group $X$ over $K$ and $\tau\in CA_{algr}(G,X,K)$ such that $\tau$ is $( \sbullet[.9] \sbullet[.9]) $-pre-injective but is not 
$( \sbullet[.9]) $-pre-injective. 
\end{proposition}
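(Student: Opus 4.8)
The plan is to construct an explicit example over an arbitrary field $K$ and arbitrary group $G$. The key idea is that $(\sbullet[.9]\sbullet[.9])$-pre-injectivity only sees the dimension of $\tau((A^\Omega)_e)$, whereas $(\sbullet[.9])$-pre-injectivity is sensitive to the decomposition into connected components. So I would take $X$ to be a disconnected algebraic group whose identity component $X^0$ is positive-dimensional (to control dimensions) but whose component group $X_0$ is nontrivial (to create a dimension-preserving but non-injective collapse). A natural choice is $X = \G_a \times \Z/2\Z$ over $K$ (i.e.\ the additive group of the affine line times a two-element group), or if $\car K = 2$ one may instead take $X = \G_a \times \mu_3$ or simply $\G_m \times \Z/2\Z$; the point is only that $\dim X = 1$ and $|X_0| = 2$.

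Next I would pick the memory set and local homomorphism. Let $M = \{1_G, s\}$ for some $s \ne 1_G$ (if $G$ is trivial this degenerates, but the statement is only interesting for infinite $G$, and even in the trivial case one can take $M = \{1_G\}$ and a suitable endomorphism; I will assume $G$ nontrivial and fix $s \ne 1_G$, also arranging $M = M^{-1}$ by enlarging to $\{1_G, s, s^{-1}\}$ if needed). Define $f \colon X^M \to X$ by $f((a,\epsilon),(b,\delta)) = (a + b,\ \epsilon + \delta)$ on the $\G_a$-coordinate and $\Z/2\Z$-coordinate respectively — that is, $f$ is just the multiplication map $X \times X \to X$ composed with projection to the two relevant factors, hence a homomorphism of algebraic groups. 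Then $\tau \in CA_{algr}(G,X,K)$ with local defining map $f|_{A^M}$.

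Then I would verify the two assertions. For $(\sbullet[.9]\sbullet[.9])$-pre-injectivity: for any finite $\Omega \subset G$, the $\G_a$-component of $f$ already gives a surjection at the level of the connected (indeed irreducible, and in fact the full $(\G_a)^\Omega$) factor, so $\dim \tau((A^\Omega)_e) = \dim(A^\Omega) = |\Omega| \cdot \dim X$; the discrete $\Z/2\Z$-component cannot lower the dimension. This should be a short computation using that the $\G_a$-part of $\tau$ restricted to $((\G_a)^\Omega)_e$ is already surjective onto $(\G_a)^\Omega$ (a standard fact for the "sum" linear cellular automaton, or one checks directly with $\Omega$ a single point). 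For the failure of $(\sbullet[.9])$-pre-injectivity: take $\Omega = \{1_G\}$, so $A^\Omega = A = X(K)$, and let $H = X^0(K) \times$ (picking one component) $\subsetneq A^\Omega$ be the identity component — a proper Zariski-closed subset of $A$. One then checks $\tau((A^\Omega)_e) = \tau(H_e)$: because the $\Z/2\Z$-component of $f$ is a homomorphism $(\Z/2\Z)^M \to \Z/2\Z$ which is surjective already on inputs supported in a single coordinate (indeed on $\{e\}$-padded configurations, the image is controlled by the values at $M \cap \Omega = \{1_G\}$ together with the contribution of $e$'s, but the homomorphism on that coordinate is the identity, so we do \emph{not} immediately win — here I need to be more careful).

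The main obstacle, and the place I expect to spend real effort, is exactly this last point: I must arrange that padding by the neutral element together with the homomorphism structure genuinely collapses the component data while preserving the $\G_a$-dimension. The fix is to choose $f$ so that its component-group part $f_0 \colon (\Z/2\Z)^M \to \Z/2\Z$ is \emph{not injective} on the relevant slice — e.g.\ take $|M| \ge 2$, say $M = \{1_G, s, s^{-1}\}$, and let $f_0(\epsilon_{1_G},\epsilon_s,\epsilon_{s^{-1}}) = \epsilon_s + \epsilon_{s^{-1}}$, ignoring $\epsilon_{1_G}$. Then for a configuration supported on $\Omega = \{1_G\}$ and padded by $e$ elsewhere, the value of $\tau$ at every $g$ depends only on the $\G_a$-data, since the $\Z/2\Z$-contribution from the single nonzero site at $1_G$ is annihilated by $f_0$ (which reads only the $s,s^{-1}$ translates). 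Hence $\tau((A^\Omega)_e)$ depends only on the $\G_a$-component of the input, so replacing $A^\Omega = \G_a(K)\times \Z/2\Z$ by $H = \G_a(K) \times \{0\} \subsetneq A^\Omega$ leaves $\tau(H_e) = \tau((A^\Omega)_e)$, while $H$ is Zariski-closed and proper. This shows $\tau$ is not $(\sbullet[.9])$-pre-injective. Meanwhile the $\G_a$-part is unchanged by this modification, so $(\sbullet[.9]\sbullet[.9])$-pre-injectivity still holds by the dimension count above. I would then just need to write out these two verifications cleanly, using Lemma~\ref{l:induced-map-homo-1} to handle the component-group bookkeeping and, for the dimension statement, the elementary observation that the "sum" linear cellular automaton $\G_a^G \to \G_a^G$ with memory $M$ is surjective on asymptotic classes of $0$, which gives $\dim \tau((A^\Omega)_e) = |\Omega|$.
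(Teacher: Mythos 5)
There is a genuine gap in the final (and crucial) verification. You claim that for a configuration $c$ supported on $\Omega=\{1_G\}$ and padded by $e$, "the value of $\tau$ at every $g$ depends only on the $\G_a$-data, since the $\Z/2\Z$-contribution from the single nonzero site at $1_G$ is annihilated by $f_0$". This is false: the rule $f_0(\epsilon_{1_G},\epsilon_s,\epsilon_{s^{-1}})=\epsilon_s+\epsilon_{s^{-1}}$ only means that the cell at $1_G$ ignores its own site; the cells at $g=s$ and $g=s^{-1}$ do read site $1_G$, since $\tau_0(c)(g)$ depends on $c(gs)$ and $c(gs^{-1})$ and $g s^{-1}=1_G$ for $g=s$ (resp. $gs=1_G$ for $g=s^{-1}$). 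Concretely, if $c(1_G)=(a,1)$ and $c=e$ elsewhere (and $s^2\neq 1_G$), then $\tau(c)(s)$ and $\tau(c)(s^{-1})$ have $\Z/2\Z$-component $1$, so $\tau(H_e)\subsetneq\tau((A^\Omega)_e)$ for your $H=\G_a\times\{0\}$. Worse, the defect is not just in the verification: when $s$ has infinite order (e.g.\ $G=\Z$, $s=1$), both the $\G_a$-part and the $\Z/2\Z$-part of your $\tau$ have trivial kernel on finitely supported configurations, so $\tau$ is injective on configurations supported in any finite $\Omega$; then for any proper closed $H\subsetneq A^\Omega$ a point of $A^\Omega\setminus H$ maps outside $\tau(H_e)$, i.e.\ your $\tau$ \emph{is} $(\sbullet[.9])$-pre-injective and no choice of $\Omega$, $H$ can rescue the example.

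The repair is to make the collapse happen at the level of the alphabet rather than through interaction between neighbors. The simplest version of your own idea: take $M=\{1_G\}$ and the pointwise endomorphism $(a,\epsilon)\mapsto(a,0)$ of $X=\G_a\times\Z/2\Z$; then $\tau((A^{\{1_G\}})_e)=\tau(H_e)$ for $H=\G_a\times\{0\}$, while the $\G_a$-part is the identity, so the dimension count for $(\sbullet[.9]\sbullet[.9])$-pre-injectivity goes through. The paper's example is even more economical: it takes the finite (hence zero-dimensional) algebraic group $X=\Z/4\Z$ and the pointwise map $x\mapsto 2x$, so that $(\sbullet[.9]\sbullet[.9])$-pre-injectivity is automatic ($\dim\tau((X^E)_e)=0=\dim X^E$), while $\tau((X^E)_e)=\tau((H^E)_e)$ for the proper closed subset $H=X\setminus\{e\}$, since $2\cdot(X\setminus\{e\})=\{0,2\}=2\cdot X$. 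Your instinct that the component group is what separates the two notions is correct, but the mechanism must be a non-injective homomorphism acting sitewise (or at least one whose kernel meets finitely supported configurations nontrivially), not a rule that merely omits the central coordinate.
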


\begin{proof}
Let $X= \Z/4\Z$ and consider the homomorphism $\varphi \colon X \to X$ given by $x \mapsto 2x$. Let  $Y = \Ker \varphi \simeq \Z /2\Z$ then we also have $\varphi(X)=Y$. Let us denote $H=X \setminus \{e\} \subsetneq X$.  
\par 
We define $\tau \colon X^G \to X^G$ by $\tau(c)(g) = \varphi(c(g))$ for all $g \in G$ and $c \in X^G$. 
\par 
Now let $E \subset G$ be a finite subset. Then it is clear by the construction that  we have an equality of Krull dimensions 
$\dim \tau((X^E)_e)= \dim X^E =0$.  It follows that $\tau$ is $( \sbullet[.9] \sbullet[.9]) $-pre-injective. However, we have 
\begin{equation}
\label{e:example-two-star-does-not-implies-one-star}
\tau((X^E)_e)= \tau((H^E)_e)= (Y^E)_e \end{equation}
and $H^E \subsetneq X^E$ is a proper closed subset. Consequently, $\tau$ is not $( \sbullet[.9]) $-pre-injective and the proof is complete. 
\end{proof}

\section{Uniform post-surjectivity} 
\label{s:uniform-post-surj}
We will show in this section that the class $CA_{algr}$ admits a uniform post-surjectivity property (cf. Lemma~\ref{l:uniform-psot-surjectivity}).  
We also prove in Theorem~\ref{t:post-surjective-implies-surjective} that in the class $CA_{alg}$, we have the implication: 
\[
\mbox{post-surjectivity} \implies \mbox{surjectivity}.
\]

\subsection{Post-surjectivity implies surjectivity}
This subsection is independent of the rest of the paper. We begin with the following uniform property of strong irreducibility which is a generalization of \cite[Proposition~1]{kari-post-surjective}. 

\begin{lemma} 
\label{l:strongly-irred-post-surjective-lemma}
Let $G$ be a countable group and let $K$ be an uncountable algebraically closed field.  Let $A$ be an algebraic variety over $K$. 
Let $\Sigma \subset A^G$ be a strongly irreducible closed algebraic subshift. Then there exists a finite subset $\Delta \subset G$ such that for every $x,y \in \Sigma$ and every finite subset $E \subset G$, we can find $z \in \Sigma$ which coincides with $x$ outside of $E \Delta$ and 
$z\vert_E=y\vert_E$. 
\end{lemma}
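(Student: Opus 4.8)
The plan is to extract the finite set $\Delta$ from the strong irreducibility of $\Sigma$ and then "chain together" countably many local corrections, using the uncountability of $K$ to guarantee that the limit configuration lands in $\Sigma$ and differs from $x$ only on $E\Delta$. First I would fix, by strong irreducibility, a finite symmetric $\Delta \subset G$ with $1_G \in \Delta$ such that for all finite $P, Q \subset G$ with $P\Delta \cap Q = \varnothing$ and all $u,v \in \Sigma$ there is $w \in \Sigma$ with $w|_P = u|_P$ and $w|_Q = v|_Q$. The target statement strengthens this by allowing $P$ to be cofinite, i.e. $Q = E$ finite and $P = G \setminus E\Delta$; the naive approach is to apply strong irreducibility on larger and larger finite windows and pass to a prodiscrete limit, but the difficulty is that the limit of configurations in $\Sigma$ need not stay in $\Sigma$ — this is exactly where closedness of $\Sigma$ plus an algebraicity/uncountability argument is needed.

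Concretely, I would enumerate $G = \{g_1, g_2, \dots\}$ (possible since $G$ is countable) and set $B_n := \{g_1,\dots,g_n\}$, arranged so that $E\Delta \subset B_1$. Starting from $x$, I would inductively build $z^{(n)} \in \Sigma$ with $z^{(n)}|_E = y|_E$, with $z^{(n)}$ agreeing with $x$ outside $E\Delta$, and with $z^{(n)}|_{B_n \setminus E\Delta}$ stabilizing: at stage $n$ apply strong irreducibility with $Q = E$ (plus the already-fixed coordinates in $B_{n-1}$) against the finite patch one wants to keep, producing $z^{(n)}$ that extends the previous agreement. The subtlety is that to force $z^{(n)}$ to coincide with $x$ *outside* $E\Delta$ — not merely on a finite window — one cannot just invoke strong irreducibility once; instead I expect to use the algebraic structure: the set of configurations in $\Sigma$ agreeing with a prescribed finite patch is a closed algebraic subset of a product of copies of the variety $A$, and over an uncountable field $K$ a countable decreasing intersection of nonempty Zariski-closed subsets of a variety of finite type is nonempty (a Baire-type / countable-cofinality argument for Noetherian schemes over uncountable fields). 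This is the standard mechanism — cf. the role of uncountability of $K$ throughout the paper — for upgrading finite approximations to an exact solution.

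So the key steps, in order, are: (1) produce $\Delta$ from strong irreducibility; (2) for each finite $E$ and each $x,y\in\Sigma$, describe the set $\Sigma_{x,y,E} := \{ z \in \Sigma : z|_E = y|_E,\ z|_{G\setminus E\Delta} = x|_{G\setminus E\Delta}\}$ as a (pro-)algebraic subset, realized as an inverse limit over finite windows $W \supset E\Delta$ of the Zariski-closed sets $\Sigma_{x,y,E,W} = \{\zeta \in \Sigma_W : \zeta|_E = y|_E,\ \zeta|_{W\setminus E\Delta} = x|_{W\setminus E\Delta}\} \subset A^W$; (3) show each $\Sigma_{x,y,E,W}$ is nonempty using strong irreducibility (take $P = W \setminus E\Delta$, note $P\Delta$ may meet $E$, so one must enlarge: choose $\Delta$ so the separation is between $E$ and $P$ with a $\Delta$-collar, which is precisely the shape $P\Delta \cap E = \varnothing$ once $P \subset G \setminus E\Delta$); (4) invoke the uncountable-field nonemptiness-of-inverse-limit principle for descending chains of nonempty closed subschemes of finite type to conclude $\Sigma_{x,y,E} \neq \varnothing$, and pick $z$ there. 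The main obstacle I anticipate is step (3)–(4): making the window bookkeeping consistent so that the separation hypothesis $P\Delta \cap Q = \varnothing$ holds simultaneously for all $W$ (which forces the collar to be put on the $E$ side, i.e. one deletes $E\Delta$ rather than just $E$ — this is why $\Delta$ appears in the statement), and invoking the correct form of the uncountability argument so that closedness of $\Sigma$ is genuinely used to land in $\Sigma$ and not merely in its prodiscrete closure.
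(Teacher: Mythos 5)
Your proposal is correct and follows essentially the same route as the paper: extract $\Delta$ from strong irreducibility, use it on an exhaustion of $G$ (with the collar placed around $E$, so the constraint sets $\{\zeta\in\Sigma_W:\ \zeta\vert_E=y\vert_E,\ \zeta\vert_{W\setminus E\Delta}=x\vert_{W\setminus E\Delta}\}$ are nonempty), and then pass to the inverse limit using the nonemptiness result for inverse systems of nonempty algebraic varieties over an uncountable algebraically closed field, with closedness of $\Sigma$ in the prodiscrete topology guaranteeing the limit configuration lies in $\Sigma$. The inductive ``chaining'' sketch in your second paragraph is unnecessary, but your final step-by-step outline is exactly the paper's argument.
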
 

\begin{proof} 
Since $\Sigma$ is strongly irreducible, there exists a finite subset $\Delta \subset G$ with $1_G \in \Delta$ such that for every finite subsets $E_1 , E_2 \subset G$ with $E_1 \Delta \cap E_2= \varnothing$ and every $z_1, z_2 \in \Sigma$, there exists $z \in \Sigma$ such that $z\vert_{E_1}= z_1\vert_{E_1}$ and $z\vert_{E_2}=z_2\vert_{E_2}$. 
\par 
Let $(F_n)_{n \in \N}$ be an increasing sequence of finite subsets of $G$ such that $G= \cup_{n \in N} F_n$ and such that $E\Delta \subset F_n$ for every $n \in \N$. 
We set $H_n= F_n \setminus E \Delta$. Then for every $n \in\N$, there exists by the strong irreducibility of $\Sigma$ a configuration $z_n \in \Sigma$ such that $z_n \vert_E= y\vert_E$ and such that  $z_n\vert_{H_n}= x\vert_{H_n}$.  
\par 
Let us define for $n \in \N$: 
$$
\Lambda_n \coloneqq \{ c \in \Sigma_{F_n} \colon c\vert_{E}= y \vert_E, c\vert_{H_n}= x\vert_{H_n}\}. 
$$
\par 
Then by the above paragraph, we deduce that $(\Lambda_n)_{n \in \N}$ forms an inverse system of nonempty algebraic varieties over $K$. The transition maps are simply induced by the restriction maps $A^{F_m} \to A^{F_n}$ for $0 \leq n \leq m$. Hence, by applying \cite[Lemma~B.2]{cscp-alg-ca}, $\varprojlim_{n \in \N} \Lambda_n $ is nonempty and thus we can find 
$$
z \in \varprojlim_{n \in \N} \Lambda_n \subset \varprojlim_{n \in \N} \Sigma_{F_n}= \Sigma.
$$
\par 
The last equality follows from the closedness of $\Sigma$ in $A^G$ with respect to the prodiscrete topology. 
\par 
It is clear from the construction that $z \in \Sigma$ is asymptotic to $x$ and such that 
$z\vert_E=y\vert_E$. In fact, $z$ and $x$  coincide outside of $E\Delta$. The proof is thus complete. 
\end{proof}

We obtain the following generalization of \cite[Proposition~2]{kari-post-surjective}.  

\begin{theorem}
\label{t:post-surjective-implies-surjective}
Let $G$ be a countable group and let $K$ be an uncountable algebraically closed field.  Let $X$ be an algebraic variety over $K$ and let $A=X(K)$. 
Let $\Sigma \subset A^G$ be a strongly irreducible algebraic sofic subshift. Suppose that $\tau \colon \Sigma \to \Sigma$ is the restriction of some $\sigma \in CA_{alg}(G,X,K)$. Then if $\tau$ is post-surjective,  it is also  surjective. 
\end{theorem}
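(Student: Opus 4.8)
The plan is to deduce surjectivity from post-surjectivity by combining the uniform post-surjectivity behaviour with a compactness argument over the prodiscrete topology, in the spirit of \cite[Proposition~2]{kari-post-surjective} but adapted to the algebraic setting where the relevant finite-level objects are algebraic varieties rather than finite sets.

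First I would fix the data: write $\Sigma = \sigma'(\Sigma')$ where $\Sigma' \subset A^G$ is an algebraic subshift of finite type and $\sigma' \in CA_{alg}(G,X,K)$, so that $\Sigma$ is an algebraic sofic subshift; since $\Sigma$ is moreover strongly irreducible, Lemma~\ref{l:strongly-irred-post-surjective-lemma} applies and yields a finite subset $\Delta \subset G$ with $1_G \in \Delta$ such that for all $x,y \in \Sigma$ and every finite $E \subset G$ there is $z \in \Sigma$ agreeing with $x$ outside $E\Delta$ and with $z\vert_E = y\vert_E$. I would then exploit post-surjectivity of $\tau \colon \Sigma \to \Sigma$ to extract a \emph{uniform} witness radius: there should be a finite subset $N \subset G$ (a ``co-repair'' constant, depending only on $\tau$, $\Delta$, and the memory set of $\sigma$) such that whenever $x \in \Sigma$ and $y \in \Sigma$ satisfies $y\vert_{G \setminus E} = (\tau x)\vert_{G \setminus E}$ for a finite $E$, one can find a preimage $z$ of $y$ asymptotic to $x$ with $z\vert_{G \setminus EN} = x\vert_{G \setminus EN}$. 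This uniformity is exactly the content one expects from Lemma~\ref{l:uniform-psot-surjectivity} (the uniform post-surjectivity property of $CA_{algr}$) together with the strong-irreducibility input; I would invoke that lemma rather than reprove it.

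Next, given an arbitrary target $w \in \Sigma$, I would build a preimage by exhaustion. Choose an increasing sequence $(F_n)_{n \in \N}$ of finite subsets with $G = \bigcup_n F_n$. Starting from any configuration $x_0 \in \Sigma$ (nonempty since $\Sigma$ is a nonempty subshift), use strong irreducibility to replace $x_0$ on $F_0$ by something matching $w$ there after applying $\tau$, more precisely: at stage $n$ I have $z_n \in \Sigma$ with $(\tau z_n)\vert_{F_n} = w\vert_{F_n}$; using strong irreducibility of $\Sigma$ I produce an element of $\Sigma$ agreeing with $\tau z_n$ on a large ball and with $w$ on $F_{n+1}$, and then apply the uniform post-surjectivity to lift the correction back to a $z_{n+1} \in \Sigma$ that coincides with $z_n$ outside a controlled finite set and satisfies $(\tau z_{n+1})\vert_{F_{n+1}} = w\vert_{F_{n+1}}$. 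The controlled-finite-set property, together with the fact that the correction regions can be arranged to escape to infinity, guarantees that $(z_n)$ stabilizes pointwise, so $z = \lim_n z_n$ exists in $A^G$; closedness of $\Sigma$ in the prodiscrete topology gives $z \in \Sigma$, and continuity of $\tau$ gives $\tau(z) = w$. Formally I would phrase the existence of $z$ as the nonemptiness of an inverse limit $\varprojlim_n \Lambda_n$ of nonempty algebraic varieties $\Lambda_n = \{ c \in \Sigma_{F_n} : (\tau c)\vert_{F_n} = w\vert_{F_n}, \text{ suitable boundary conditions}\}$, invoking \cite[Lemma~B.2]{cscp-alg-ca} exactly as in the proof of Lemma~\ref{l:strongly-irred-post-surjective-lemma}, which keeps the argument parallel and avoids re-deriving the compactness.

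The main obstacle I anticipate is pinning down the uniform ``co-repair'' radius $N$ cleanly: post-surjectivity of $\tau$ is an a priori non-uniform statement (the size of the asymptoticity region of $z$ depends on the pair $(x,y)$), and turning it into a finite memory-like constant requires either a compactness/König-type argument on $\Sigma$ restricted to a fixed ball --- using that $A$ is an algebraic variety over an uncountable field so that the finite-level sets $\Sigma_{F}$ behave well --- or a direct appeal to Lemma~\ref{l:uniform-psot-surjectivity}. I would lean on the latter. A secondary technical point is ensuring the successive correction regions at stages $n$ are mutually far enough apart (using the $\Delta$-thickening and the memory set) that the partial preimages $z_n$ genuinely stabilize on every fixed $g \in G$; this is a bookkeeping matter handled by choosing the $F_n$ to grow fast relative to $N$ and $\Delta$, exactly as in \cite[Proposition~2]{kari-post-surjective}.
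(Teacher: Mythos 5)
There is a genuine gap: the uniformity you lean on is not available in the setting of this theorem. Lemma~\ref{l:uniform-psot-surjectivity} is stated and proved only for $\tau \in CA_{algr}(G,X,K)$ acting on the full shift $A^G$ with $X$ an algebraic group; its proof uses the group structure in an essential way (the sets $V_n$ and $Z_n$ are algebraic subgroups forming an increasing chain, and the repair step is the translation $x' = dx$, which requires multiplying configurations). Here $X$ is merely an algebraic variety, $\tau$ is the restriction of a $CA_{alg}$ map to a strongly irreducible algebraic sofic subshift $\Sigma$, and none of those ingredients exist, so you cannot ``invoke that lemma rather than reprove it.'' Since you yourself identify the uniform co-repair radius $N$ as the crux of your inductive construction (it is what keeps the successive corrections controlled so that the $z_n$ stabilize), the argument does not get off the ground without it, and your fallback ``compactness/K\"onig-type argument'' is not carried out; over an infinite alphabet there is no prodiscrete compactness, and making an inverse-limit argument work would require exhibiting the relevant finite-level sets as a countable inverse system of nonempty algebraic (or constructible) sets, which your sketch does not do for the sets $\Lambda_n$ you write down (these involve the condition $(\tau c)\vert_{F_n}=w\vert_{F_n}$ together with unspecified boundary conditions, and their nonemptiness at each finite stage is exactly what needs the missing uniformity).

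The paper's proof sidesteps all of this: fix $x_0\in\Sigma$ and the target $y$; by Lemma~\ref{l:strongly-irred-post-surjective-lemma} there are configurations $z_n\in\Sigma$ asymptotic to $\tau(x_0)$ with $z_n\vert_{E_n}=y\vert_{E_n}$; plain (non-uniform) post-surjectivity puts each $z_n$ in $\im(\tau)$; hence $y$ lies in the prodiscrete closure of $\im(\tau)$, and the closed-image theorem for algebraic sofic shifts (Theorem~8.1 of \cite{cscp-invariant-ca-alg}) gives $y\in\im(\tau)$. In other words, one never builds an explicit preimage of $y$; the closedness of $\im(\tau)$ replaces the entire stabilization scheme you propose. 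If you want to salvage your approach, you would need to prove a uniform post-surjectivity statement for $CA_{alg}$ on strongly irreducible algebraic sofic subshifts from scratch; the much shorter route is to use closedness of the image as above.
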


\begin{proof}
Let us fix $x_0, y\in \Sigma$ and a memory set $M \subset G$ of $\tau$. 
Let $(E_n)_{n \in \N}$ be an increasing sequence of finite subsets of $G$ such that $G= \cup_{n \in N} E_n$. 
Then for every $n \in \N$, we infer from  Lemma~\ref{l:strongly-irred-post-surjective-lemma} that there exists $z_n \in \Sigma$ asymptotic to $\tau(x_0)$ and such that $z_n\vert_{E_n} = y\vert_{E_n}$. 
\par
Since $\tau$ is post-surjective and $\tau(x_0) \in \im (\tau)$, it follows 
that $z_n \in \im (\tau)$ for every $n \in \N$. As $(E_n)_{n \in \N}$ is an increasing sequence of finite subsets of $G$ such that $G= \cup_{n \in N} E_n$,  we deduce that $y$ belongs to the closure of $\im (\tau)$ with respect to the prodiscrete topology. 
\par 
Since $\im (\tau)$ is closed by \cite[Theorem~8.1]{cscp-invariant-ca-alg}, it follows that $y \in \im (\tau)$. Therefore, $\tau$ is surjective and the proof is complete. 
\end{proof} 
\par 
We remark here that the exact same proof shows that Theorem~\ref{t:post-surjective-implies-surjective} still holds  if $X$ is an algebraic group over an arbitrary algebraically closed field $K$, $\Sigma \subset A^G$ is a strongly irreducible algebraic group subshift (cf.~\cite[Definition~1.2]{phung-israel}), and $\tau\colon \Sigma \to \Sigma$ is the restriction of some $\sigma \in CA_{algr}(G,X,K)$. It suffices to observe that in this situation, $\im (\tau)$ is still closed in $A^G$ thanks to  \cite[Theorem~4.4]{phung-israel}.

\subsection{Uniform post-surjectivity}  

We have the following key uniform property for the post-surjectivity in the class $CA_{algr}$. 

\begin{lemma}[Uniform post-surjectivity] 
\label{l:uniform-psot-surjectivity}
Let $G$ be a countable group.  Let $X$ be an algebraic group over an uncountable algebraically closed field $K$ and suppose that $\tau \in CA_{algr}(G,X,K)$ is post-surjective. Let $A=X(K)$. Then there exists a finite subset $E\subset G$ with the following property. For all $x, y\in A^G$ such that $y\vert_{G \setminus \{1_G\}} =\tau(x)\vert_{G \setminus \{1_G\}}$, there exists $x' \in A^G$ such that $\tau(x')=y$ and 
$x'\vert_{G \setminus E}= x\vert_{G \setminus E}$. 
\end{lemma}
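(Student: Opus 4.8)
The plan is to deduce the uniform statement from the bare post-surjectivity of $\tau$ by a compactness argument, in the same spirit as the proof of Lemma~\ref{l:strongly-irred-post-surjective-lemma} but now exploiting the group structure to reduce to a fixed ``asymptotic locus''. First, I would use the fact that $\tau$ is post-surjective at the single configuration pair relevant here: given $x, y \in A^G$ with $y$ and $\tau(x)$ differing only at $1_G$, post-surjectivity produces \emph{some} $z \in A^G$ asymptotic to $x$ with $\tau(z) = y$, but the finite exceptional set depends on $x, y$. The point is to show the exceptional set can be taken inside one fixed finite $E$ independent of $x, y$. The natural strategy is to argue by contradiction: suppose no such finite $E$ works, take an increasing exhaustion $G = \bigcup_n E_n$ by finite symmetric subsets containing the memory set $M$ (and $1_G$), and for each $n$ find $x_n, y_n \in A^G$ differing only at $1_G$ under $\tau$ for which every preimage of $y_n$ asymptotic to $x_n$ must differ from $x_n$ somewhere outside $E_n$.

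The key reduction, and I expect this to be the technical heart of the argument, is to replace this ``for all $n$'' family by a single compactness statement over algebraic varieties, using \cite[Lemma~B.2]{cscp-alg-ca} as in the proof of Lemma~\ref{l:strongly-irred-post-surjective-lemma}. Concretely, fix $x = $ the constant configuration at the neutral element $e$ (or any fixed $x$; by $G$-equivariance and by translating, the ``defect at $1_G$'' case is the universal one, since any single-site defect can be moved to $1_G$), and let $y$ range over the configurations agreeing with $\tau(x)$ off $1_G$ — this is parametrized by $A$, which here we may even take to be a single point $a \in A$ at coordinate $1_G$. For each such target $y = y_a$ and each $n$, consider
\[
\Lambda_n(a) \coloneqq \{\, c \in \Sigma_{E_n} : \tau \text{ is compatible with } c \text{ sending it to } y_a\vert_{E_n}, \ c\vert_{E_n \setminus (\{1_G\}M^{-1})} = x\vert_{E_n \setminus (\{1_G\}M^{-1})} \,\},
\]
i.e.\ the variety of partial preimages of $y_a$ over $E_n$ that agree with $x$ outside a bounded window. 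Post-surjectivity guarantees each $\Lambda_n(a)$ is nonempty (restrict a global asymptotic preimage), the restriction maps make $(\Lambda_n(a))_n$ an inverse system of nonempty $K$-varieties, and the inverse limit is nonempty, yielding a global preimage $x'$ of $y_a$ asymptotic to $x$. One then wants the \emph{exceptional set} of $x'$ versus $x$ to lie in a fixed $E$: this follows if one builds the $\Lambda_n$ so that the coordinates outside a fixed finite window $E$ are forced to equal $x$; the obstruction to doing this naïvely is that $\tau$ can propagate a single-site change of $y$ to an a priori unbounded change in the preimage. The resolution is to observe that post-surjectivity applied to $x$ and $\tau(x)$ itself (differing nowhere, or at one prescribed site) already fixes, once and for all, one finite $E_0$ that works for that particular $x$; then by $G$-equivariance of $\tau$ and the fact that only the $1_G$-coordinate of $y$ varies over the compact alphabet $A$, a single $E$ — essentially $E_0$ enlarged by $M^{\pm 1}$ — works uniformly.

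More carefully: I would first prove the statement for one fixed $x$, extracting a finite $E = E(x)$ via the inverse-limit argument above (the nonemptiness of each $\Lambda_n$ comes from post-surjectivity at $x$, and closedness/finite-typeness of the $\Lambda_n$ from $\tau \in CA_{algr}$ acting by morphisms of algebraic groups). Then I would upgrade to all $x$ simultaneously. Here is where amenability is \emph{not} available, so one cannot average; instead one uses that the data $(x, y)$ with $y\vert_{G\setminus\{1_G\}} = \tau(x)\vert_{G\setminus\{1_G\}}$ is, after choosing $x' - x$ as the unknown (in the group $A^G$, writing things multiplicatively via the group structure on $X$), equivalent to solving $\tau(x \cdot w) = y$ for $w$ asymptotic to the identity configuration; by linearity-like behavior of the induced map on the ``difference'', and crucially because $\tau$ commutes with the $G$-action, the solvability and the size of the support of $w$ depend only on the \emph{pattern} $\tau(x)^{-1} y$, which here is supported at $1_G$ and takes values in the finite set... no, $A$ is infinite, so one genuinely needs the algebraic-variety compactness again, now with $a \in A$ as an additional base parameter: form $\Lambda_n \subset \Sigma_{E_n} \times A$ fibered over $A$, and apply \cite[Lemma~B.2]{cscp-alg-ca} to the total space to get a section over all of $A$ with support of the ``difference'' contained in a fixed $E$.

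The main obstacle, then, is controlling the support of the preimage uniformly in the target $y$ — equivalently, showing the ``window'' in which $x'$ may differ from $x$ does not grow as $y$ varies. I expect the clean way through is exactly the $A^G$-group-structure substitution $x' = x \cdot w$ reducing to $w$ asymptotic to the identity and a target $\tau(x)^{-1}y$ supported at $1_G$, combined with the observation that post-surjectivity is itself a $G$-equivariant condition, so it suffices to handle the ``germ at $1_G$'' and then transport; the algebraic-geometry input (inverse limits of nonempty finite-type $K$-schemes are nonempty over an uncountable $K$) packages the infinitely many site-conditions into a single nonempty variety and simultaneously produces the bound on the support as the finite set $E$ outside which every $\Lambda_n$ forces agreement with $x$.
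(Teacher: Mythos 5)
You have the right reduction and the right tools, but the step that actually produces a \emph{uniform} finite set $E$ is missing, and the mechanism you propose for it does not work. Your multiplicative substitution $x'=dx$ (using that $\tau$ is a group homomorphism of $A^G$ with $\tau(e^G)=e^G$, since its local defining map is a homomorphism of algebraic groups) correctly reduces the lemma to: find one finite $E$ such that for \emph{every} $a\in A$, the configuration equal to $e$ off $1_G$ and to $a$ at $1_G$ has a $\tau$-preimage equal to $e$ outside $E$. The gap is in how you obtain uniformity over the infinite parameter $a\in A$. Applying \cite[Lemma~B.2]{cscp-alg-ca} ``to the total space fibered over $A$'' only yields one point of an inverse limit, i.e.\ a preimage for a single value of $a$; it cannot produce ``a section over all of $A$''. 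Your earlier fallbacks (compactness of the alphabet, $G$-equivariance, or post-surjectivity applied once to the pair $(x,\tau(x))$) do not control the variation in $a$ either, as you yourself half-concede. Moreover, the inverse system $\Lambda_n(a)$ of partial preimages constrained to agree with $x$ outside a \emph{fixed} window is circular: the nonemptiness of those sets is exactly the uniform statement to be proved, and post-surjectivity only gives agreement outside a finite set depending on $a$.

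The missing idea, which is how the paper argues, is to run the countability argument inside the alphabet rather than on partial preimages. Fix an exhaustion $(E_n)_{n\in\N}$ of $G$ and let $V_n\subset A^{E_n}$ be the set of configurations supported in $E_n$ whose image under $\tau$ is trivial off $1_G$; this is an algebraic subgroup (a kernel of a homomorphism of algebraic groups built from the local defining map). Let $Z_n\subset A$ be the image of $V_n$ under evaluation of $\tau$ at $1_G$: these form an increasing sequence of algebraic subgroups (in particular constructible subsets) of $A$, and post-surjectivity applied to $e^G$ and its one-site perturbations shows $\bigcup_n Z_n=A$. Hence the complements $A\setminus Z_n$ form a decreasing sequence of constructible sets with empty intersection, and since $K$ is uncountable and algebraically closed, \cite[Lemma~B.2]{cscp-alg-ca} forces $Z_N=A$ for some $N$. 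Taking $E=E_N$, any single-site defect value $a$ has a ``corrector'' $d\in V_N$ supported in $E_N$, and your difference trick $x'=dx$ finishes the proof. So the uncountability hypothesis is exploited on an increasing chain of images in $A$, not on an inverse limit of preimage varieties; without this stabilization step your argument does not close.
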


\begin{proof}
Let $M \subset G$ be a finite memory set of $\tau$ such that $1_G \in M$ and $M=M^{-1}$. Let $\mu \colon A^M \to A$ be the corresponding local defining map. 
\par 
Since $\tau \in CA_{algr}(G,X,K)$, it follows that $\mu$ is induced by a homomorphism of algebraic groups $f \colon X^M \to X$. 
\par 
Let $(E_n)_{n \in \N}$ be an exhaustion of $G$ consisting of finite groups such that $1_G \in E_0$. For each $n \in \N$, we define 
\begin{equation}
\label{e:uniform-post-surjectivity-eq-1}
V_n \coloneqq 
\{ x\in A^G \colon \tau(x)\vert_{G \setminus \{1_G\}} = e^{G \setminus \{1_G\}}, x\vert_{G \setminus E_n} = e^{G \setminus E_n} \}. 
\end{equation}
\par 
Consider the following homomorphism of algebraic groups 
\begin{align}
    \varphi_n \colon A^{E_n}\times \{e\}^{E_n M^2 \setminus E_n} \to A^{E_nM}
\end{align}
defined by $\varphi_n(x)(g) = \mu((g^{-1}x)\vert_M)$ for all $x \in A^{E_n}\times \{e\}^{E_n M^2 \setminus E_n}$ and $g \in E_nM$. 
\par 
Note that $1_G \in E_n M$ for every $n\in \N$. We denote respectively by $p_n \colon  A^{E_nM} \to A^{\{1_G\}}$ and $q_n \colon A^{E_nM} \to A^{E_nM\setminus \{1_G\}}$ the canonical projections. 
\par
It is clear that for all $n \in \N$, we can identify 
\begin{equation}
  V_n=\Ker q_n \circ \varphi_n   
\end{equation}
which is  an algebraic subgroup of $A^{E_n}\times \{e\}^{E_n M^2 \setminus E_n}=A^{E_n}$. 
Let us consider  
\begin{equation}
    Z_n \coloneqq p_n (\varphi_n (V_n))= \tau(V_n)_{\{1_G\}},  \quad T_n \coloneqq A \setminus Z_n. 
\end{equation} 
\par 
Then $Z_n$ is an algebraic subgroup of $A$ for every $n \in N$. Since $E_n \subset E_{n+1}$ for every $n \in \N$, we deduce from \eqref{e:uniform-post-surjectivity-eq-1} that $V_n \subset V_{n+1}$. Consequently, we find that 
$Z_n \subset Z_{n+1}$ for all $n \in \N$. 
\par 
We claim that $\cup_{n \in \N}Z_n=A$. 
Indeed, let $y \in A$ and consider $c \in A^G$ defined by $c(g)=e$ for all $g \in G\setminus \{1_G\}$ and $c(1_G)=y$.  Since $\tau$ is post-surjective and since $\tau(e^G)=e^G$, it follows that there exist $x \in A^G$ and $n \in \N$ such that $x\vert_{G \setminus E_n}=e^{G \setminus E_n}$ and such that $\tau(x)=c$. We deduce that $\tau(x)\vert_{G\setminus \{1_G\}}=e^{G\setminus \{1_G\}}$ and thus $x \in V_n$. Moreover, as $\tau(x)(1_G)=y$, it follows that $y \in Z_n$. Hence, we have proven the claim 
that $\cup_{n \in \N}Z_n=A$. 
\par 
Therefore, $(T_n)_{n \in \N}$ is a decreasing sequence of constructible subsets of $A$ such that 
$$
\cap_{n \in \N} T_n=A\setminus (\cup_{n \in \N} Z_n)= \varnothing. 
$$
\par 
Since the field $K$ is uncountable and algebraically closed, we infer from \cite[Lemma~B.2]{cscp-alg-ca} (see also  \cite[Lemma~3.2]{cscp-invariant-ca-alg}) that there exists $N \in \N$ such that $T_N=\varnothing$. It follows that $Z_N=A$. We claim that $E \coloneqq E_N$ satisfies the desired property in the conclusion of the lemma. 
\par 
Indeed, suppose that $x , y \in A^G$ satisfy $y\vert_{G \setminus \{1_G\}} =\tau(x)\vert_{G \setminus \{1_G\}}$. Then, let $c = y (\tau(x))^{-1} \in A^G$ then $c\vert_{G\setminus \{1_G\}}=e^{G\setminus \{1_G\}}$. Since $Z_N=\tau(V_N)_{\{1_G\}}= A$, we can find 
$d \in V_N$ such that $\tau(d)(1_G)=c(1_G)$. 
Therefore, $d\vert_{G\setminus E_N}=e^{G\setminus E_N}$ and $\tau(d)\vert_{G\setminus \{1_G\}}= e^{G\setminus \{1_G\}}$. 
\par 
Consequently, since $\tau$ is a homomorphism, we find for $x'\coloneqq dx \in A^G$ and for every $g \in G$ that 
\begin{align*} 
\tau(x')(g) & =  \tau (d)(g) \tau(x)(g) \\
&= 
  \begin{cases}
		\tau(x)(g)   & \mbox{if } g \in G\setminus \{1_G\} \\
		y(1_G) (\tau(x)(1_G))^{-1} \tau(x)(1_G)  & \mbox{if } g=1_G
	\end{cases}
\\ 
&= 
  \begin{cases}
		 y(g)  & \mbox{if } g \in G\setminus \{1_G\} \\
		y(1_G) & \mbox{if } g=1_G
	\end{cases}
\\ 
& = y(g). 
\end{align*}
\par 
Therefore, $\tau(x')=y$. On the other hand, since  $d\vert_{G\setminus E}=e^{G\setminus E}$ and $x'=dx$, we have  $x'\vert_{G \setminus E}= x\vert_{G \setminus E}$. The conclusion thus follows. 
\end{proof}

\section{Dual surjunctivity for $CA_{algr}$} 
\label{s:post-surjectivity}

In this section, we will present the proof of Theorem~\ref{t:post-surj-full-ca-algr} and the construction of Example~\ref{ex:post-sur-not-pre-inj} showing that in a certain sense,  Theorem~\ref{t:post-surj-full-ca-algr} is optimal. 

\begin{theorem}
\label{t:dual-gottschalk-ca-algr-full}
Let $G$ be a sofic group and let $X$ be an algebraic group over an uncountable algebraically closed field $K$. 
Suppose that $\tau \in CA_{algr}(G,X,K)$ is post-surjective. Then $\tau$ is both  $(\sbullet[.9])$-pre-injective and $(\sbullet[.9] \sbullet[.9])$-pre-injective. 
\end{theorem}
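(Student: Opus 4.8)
The plan is to combine the soficity-based counting argument of Capobianco--Kari--Taati with the key technical input of uniform post-surjectivity (Lemma~\ref{l:uniform-psot-surjectivity}) and the dimension-theoretic reformulation of $(\sbullet[.9])$-pre-injectivity. By Corollary~\ref{c:moore-one-star-implies-two-star-pre-injectivity}, it suffices to prove that $\tau$ is $(\sbullet[.9])$-pre-injective; the $(\sbullet[.9]\sbullet[.9])$-pre-injectivity then follows automatically. So suppose for contradiction that $\tau$ is \emph{not} $(\sbullet[.9])$-pre-injective: there exist a finite subset $\Omega\subset G$ and a Zariski closed $H\subsetneq A^\Omega$ with $\tau((A^\Omega)_e)=\tau(H_e)$. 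Enlarging $\Omega$, I would assume $\Omega=B_S(s)$ for some $s$, and (after translating) that $1_G\in\Omega$. Since $H$ is a proper closed subgroup, $\dim H<\dim A^\Omega$, equivalently $|H^{\mathrm{conn.comp.}}|$-style counting: the crucial quantitative consequence is that for any finite disjoint union of $G$-translates of $\Omega$, replacing each translated copy of $A^\Omega$ by the corresponding copy of $H$ strictly reduces the dimension by a fixed positive amount per copy, while $\tau$ sends the ``full'' pattern space and the ``$H$'' pattern space onto the same image near those translates.

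\textbf{The soficity count.} Fix a memory set $M=M^{-1}\ni 1_G$ for $\tau$ and let $\Delta$ be the finite set from uniform post-surjectivity; let $r$ be large enough that $\Omega M$, $\Delta$, and the relevant neighbourhoods all sit inside $B_S(r)$. Using Theorem~\ref{t:sofic-character}, pick a finite $S$-labeled graph $\GG=(V,E)$ with $|V(3r)|\geq(1-\varepsilon)|V|$ for $\varepsilon$ to be chosen small, and apply the Packing Lemma~\ref{l:sofic-V'}(ii) to get $V'\subset V(3r)$ with the balls $B_\GG(v,r)$, $v\in V'$, pairwise disjoint and $V(3r)\subset\bigcup_{v\in V'}B_\GG(v,2r)$. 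The disjoint copies of $\Omega$ sitting around the vertices of $V'$ play the role of the ``independent cells''. One now compares two subsets of $A^V$ (the ``patterns on the approximating graph''): the image under the $\GG$-localised version of $\tau$ of the full pattern space, versus its image after restricting the $V'$-cells to $H$. On the one hand, because $\tau((A^\Omega)_e)=\tau(H_e)$ holds \emph{locally} and $\tau$ is a (graph-localised) homomorphism of algebraic groups whose local rule only sees $M$-neighbourhoods, these two images coincide up to the boundary region $V\setminus V(3r)$, hence differ in dimension by at most $(\dim A)\cdot|V\setminus V(3r)|\leq \varepsilon(\dim A)|V|$. On the other hand the source of the second map has dimension smaller than the first by at least $\delta|V'|$ where $\delta\coloneqq\dim A^\Omega-\dim H\geq 1$ and $|V'|\geq c|V|$ for a constant $c=c(r)>0$ coming from the packing. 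Here is where \emph{uniform post-surjectivity} enters: it guarantees that the graph-localised $\tau$ is surjective onto the relevant pattern spaces with a uniformly bounded ``correction region'' $E$, so that the image dimension count is honest and the map from the $H$-restricted source is still onto the same target — this is the analogue of surjectivity of the finite-model cellular automaton used in \cite{kari-post-surjective}, and it is what forces the dimension of the $H$-source to be \emph{at least} that of the common image. Combining: $\dim(\text{common image})\geq \dim A^{V'\text{-full}}-(\text{stuff})$ while also $\dim(\text{common image})\leq \dim(H\text{-source})\leq \dim A^{V'\text{-full}}-\delta|V'|+O(\varepsilon|V|)$, giving $\delta c|V|\leq O(\varepsilon)|V|+O(1)$, a contradiction once $\varepsilon$ is small.

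\textbf{Reduction to connected components / handling non-connectedness.} Because $X$ need not be connected, dimension alone does not see the component count; but the relevant statement here is purely about Krull dimension of the pattern spaces $A^\Omega$, $H$, and their $\tau$-images, so the argument above is insensitive to disconnectedness. (The companion fact that $(\sbullet[.9])$- and $(\sbullet[.9]\sbullet[.9])$-pre-injectivity genuinely differ in the non-connected case is exactly Proposition~\ref{p:moore-one-star-no-implied-by-two-star-pre-injectivity}, and it is why we must target the stronger $(\sbullet[.9])$ version directly rather than deducing it from a dimension equality.) Where the component structure \emph{does} help is bookkeeping: one may invoke the induced cellular automaton $\tau_0$ on $X_0^G$ from Lemma~\ref{l:symbolic-map-conn-also-algr} to reduce the ``which component'' part of the counting to the finite-alphabet Theorem~\ref{t:kari-dual-surj}, and then run the dimension count above within each fibre over $X_0$; I would present it as a two-step argument, first pinning down components via $\tau_0$ and the finite-alphabet theorem, then running the dimension estimate on the connected fibres.

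\textbf{Main obstacle.} The delicate point is making the ``images coincide up to the boundary'' step rigorous: one needs that restricting the $V'$-cells of a pattern to $H$ does not change the image of the graph-localised $\tau$ \emph{except} near $V\setminus V(3r)$, and this uses both the local nature of the rule (only $M$-neighbourhoods matter, and the $V'$-cells are $r$-separated with $r\gg|M|$) and the hypothesis $\tau((A^\Omega)_e)=\tau(H_e)$ in its exact form — i.e.\ that the $e$-padding is harmless because $\tau$ is a homomorphism so we may work with cosets/kernels rather than arbitrary fibres. Getting the quantifiers right between ``$\tau$ post-surjective'' (global) and the finite graph $\GG$ (which is only an approximate model, with no group structure) is the crux; uniform post-surjectivity (Lemma~\ref{l:uniform-psot-surjectivity}) is precisely the device that localises the global post-surjectivity to a statement with a \emph{fixed} finite correction set $E$, independent of the configuration, which is what allows it to be transported to $\GG$. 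Once that transport is set up, the rest is the Gromov--Weiss soficity counting packaged with Krull dimension in place of cardinality.
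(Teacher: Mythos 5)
Your overall skeleton (reduce to $(\sbullet[.9])$-pre-injectivity via Corollary~\ref{c:moore-one-star-implies-two-star-pre-injectivity}, take a sofic approximation with the Packing Lemma, use uniform post-surjectivity from Lemma~\ref{l:uniform-psot-surjectivity} to make the graph-localised homomorphism $\Phi\colon A^V\to A^{V(3r)}$ surjective, then run a Krull-dimension count against the $\delta|V'|$ drop coming from $H$) is the same as the paper's, and that part of your plan is sound: the paper phrases the count through $\dim\Ker\tau\vert_{(A^\Omega)_e}=0$ and the fiber dimension theorem rather than your image-versus-source comparison, but these are equivalent in the case they actually cover.

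The genuine gap is the case $\dim H=\dim A^\Omega$. Your assertion that ``$H$ is a proper closed subgroup, so $\dim H<\dim A^\Omega$'' is wrong twice over: $H$ is only a Zariski closed subset, and when $X$ is disconnected a proper closed subset can have full dimension (e.g.\ a union of some but not all connected components of $A^\Omega$) -- this is precisely why $(\sbullet[.9])$- and $(\sbullet[.9]\sbullet[.9])$-pre-injectivity differ (your own citation of Proposition~\ref{p:moore-one-star-no-implied-by-two-star-pre-injectivity}), so your later claim that the argument is ``insensitive to disconnectedness'' because it is ``purely about Krull dimension'' contradicts the statement being proved. Your dimension count gives $\delta=\dim A^\Omega-\dim H\geq 1$ per packed cell and therefore yields nothing when $\delta=0$ (and likewise nothing when $\dim A=0$, a case the paper dispatches separately via Theorem~\ref{t:kari-dual-surj} and the finite-alphabet equivalence of pre-injectivity with $(\sbullet[.9])$-pre-injectivity). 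The paper's proof devotes its entire Case~2 to this: it writes $H=H'\cup H''$ with $H'$ a proper union of connected components, deduces $\tau((A^\Omega)_e)=\tau(H'_e)$, and then runs a second, \emph{cardinality} count on the component level, using that surjectivity of $\Phi$ forces surjectivity of $\Phi_0\colon X_0^V\to X_0^{V(3r)}$ while the sets $I_v$ (which record the component-level degeneracy at each packed cell) have size at most $\vert X_0^{B_S(r)}\vert-1$, with $\varepsilon$ chosen in advance to make the resulting product estimate contradict $\vert\Phi_0(X_0^V)\vert=\vert X_0\vert^{\vert V(3r)\vert}$. Your alternative suggestion -- pass to $\tau_0$ and quote the finite-alphabet Theorem~\ref{t:kari-dual-surj} -- could in principle close this case, but as written it is only a gesture: you would have to prove that post-surjectivity of $\tau$ implies post-surjectivity of $\tau_0$ (not established anywhere in your sketch, though it is provable by lifting component configurations), extract from $\tau((A^\Omega)_e)=\tau(H'_e)$ the component-level identity $\tau_0((X_0^\Omega)_\varepsilon)=\tau_0((H')_0{}_\varepsilon)$ with $(H')_0\subsetneq X_0^\Omega$, and invoke the equivalence of pre-injectivity and $(\sbullet[.9])$-pre-injectivity for finite group alphabets; none of these steps appears in your proposal, so the full-dimensional case remains open in your argument.
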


\begin{proof} 
Let $A=X(K)$ and let $S \subset G$ be a memory set of $\tau$ such that $1_G \in S$, $S=S^{-1}$ and that $S$ generates $G$. Let $f \colon A^S \to S$ be the corresponding local defining map which is a homomorphism of algebraic groups. 
\par 
Since $(\sbullet[.9])$-pre-injectivity implies $(\sbullet[.9] \sbullet[.9])$-pre-injectivity in the class $CA_{algr}$ (cf. Lemma~\ref{c:moore-one-star-implies-two-star-pre-injectivity}), it suffices to show that $\tau$ is  $(\sbullet[.9])$-pre-injective. 
\par 
Suppose on the contrary that $\tau$ is not 
$(\sbullet[.9])$-pre-injective. Then there exist a finite subset $\Omega\subset G$ and a proper closed subset $H \subsetneq A^\Omega$ such that 
\begin{equation} 
\label{e:dual-post-surjecitve-weak-pre-injective-1}  
\tau((A^\Omega)_e)=\tau(H_e). 
\end{equation}

Since $\tau$ is post-surjective, there exists a finite subset $E \subset G$ with the property described in Lemma~\ref{l:uniform-psot-surjectivity}, i.e., for $x, y\in A^G$ with $y\vert_{G \setminus \{1_G\}} =\tau(x)\vert_{G \setminus \{1_G\}}$, there exists $x' \in A^G$ such that $\tau(x')=y$ and 
$x'\vert_{G \setminus E}= x\vert_{G \setminus E}$. 
\par 
Up to enlarging $E$, we can suppose without loss of generality that for some $r\geq 2$ large enough, we have 
$$
\Omega \subset B_S(r-1) \subset E= B_S(r). 
$$
\par 
    If $\dim A=0$ then $A$ is a finite group so  Theorem~\ref{t:kari-dual-surj} implies that $\tau$ is pre-injective. By \cite[Proposition~6.4.1,  Example~8.1]{cscp-alg-goe} and \cite[Proposition~8.3.(ii)]{phung-2020}, pre-injectivity is equivalent to $(\sbullet[.9])$-pre-injectivity for finite group alphabets. Thus  we deduce that $\tau$ is $(\sbullet[.9])$-pre-injective.  
\par 
Suppose from now on that $\dim A >0$. 
Let $X_0$ be the set of connected components of $X$. 
Let us fix $0<\varepsilon< \frac{1}{2}$ small enough
  so that 
  \begin{equation}
  \label{e:post-surjective-var-epsilon-3-1}
    \vert X_0 \vert^{\varepsilon} \left( 1 - \vert X_0 \vert^{-|B_S(r)|} \right)^{\frac{1}{2|B_S(2r)|}}
    < 1,
  \end{equation}
  and that 
\begin{equation}  
\label{e:post-surjective-var-epsilon-2}
   0<   (1 - \varepsilon)^{-1} < 1 + \frac{1}{  \vert B_S(2r) \vert \dim A}.
\end{equation}
\par 
Since the group $G$ is sofic, it follows from Theorem~\ref{t:sofic-character} 
that there exists a finite $S$-labeled graph $\GG=(V,E)$ associated to the pair $(3r, \varepsilon)$ 
such that
\begin{equation} 
\label{e:sofic-V-2post-surjective}
    \vert V(3r) \vert \geq (1 - \varepsilon) \vert V \vert > \frac{1}{2} \vert V \vert, 
\end{equation}
where for each $s \geq 0$, the subset $V(s)\subset V$ 
consists of $v \in V$ such that there exists a unique $S$-labeled graph
isomorphism $\psi_{v,s}\colon  B_\GG(v,s) \to B_S(s)$ sending  
 $v$ to $1_G$ (cf. Theorem \ref{t:sofic-character}).
Note that $V(s) \subset V(s')$ for all $0 \leq s' \leq s$.  
\par
We denote $B(v,s)\coloneqq B_\GG(v,s)$ for $v \in V$ and $s \geq 0$.  
Define $V' \subset V(3r) $ as in Lemma~\ref{l:sofic-V'}.(ii) so that 
$B(v,r)$ are pairwise disjoint for all $v\in V'$ and that $V(3r) \subset \bigcup_{v \in V'}B(v,2r)$. In particular,  
\begin{equation}
\label{e:v'-and-v-post-surjective-3}
    \vert V(3r) \vert \leq \vert V' \vert \vert B_S(2r) \vert.
\end{equation}
\par 
Let us denote 
$\overline{V'} \coloneqq \coprod_{v \in V'} B(v,r)$. 
Note that the local map $f$ induces a homomorphism of algebraic groups $\Phi \colon A^V \to A^{V(3r)}$  given by 
$\Phi(x)(v)= f(\psi_{v,r}(x\vert_{B(v,r)}))$ for all $x \in A^V$ and $v \in V(3r)$. 
\par 
  Since $E=S=B(r)$, we deduce by applying repeatedly  Lemma~\ref{l:uniform-psot-surjectivity} that the homomorphism  $\Phi$ is surjective (cf. the proof of \cite[Lemma 2]{kari-post-surjective}), that is, 
  \begin{equation} 
  \label{e:surj-post-phi-surjective-v-3r}
  \Phi(A^V)= A^{V(3r)}.
  \end{equation}
  \par 
  We claim that $\dim \Ker \tau\vert_{(A^\Omega)_e}=0$. Indeed, suppose on the contrary that 
  \begin{equation}
      \label{e:main-dual-post-surjective-2} 
      \dim \Ker \tau\vert_{(A^\Omega)_e} \geq 1. 
  \end{equation}
  \par 
  For $s \geq 0$ and  $v \in V(s)$, we denote by $\varphi_{v,s} \colon A^{B_S(s)} \to A^{B(v,s)}$ and $\varphi_{v,s,\Omega}\colon A^{\Omega}  \to A^{\psi_{v,s}(\Omega)}$ the isomorphisms induced respectively by the bijections $\psi_{v,s}$ and  $\psi_{v,s}\vert_\Omega$. 
  \par 
  Since $\Omega \subset B_S(r-1)$,  we can regard $   \Ker \tau\vert_{(A^\Omega)_e}$ as a closed subgroup of $A^{B_S(r-1)} \times \{e\}^{B_S(r)\setminus B_S(r-1)}$. 
  \par 
  As the homomorphism $\Phi$ is naturally induced by the local defining map $f\colon A^S \to A$ of $\tau$ and as the balls $B_S(r)$, $v \in V'$, are disjoint, we deduce that 
 \begin{equation*}
 \{e\}^{V \setminus \overline{V'}}\times \prod_{v \in V'} \varphi_{v, r} (\Ker \tau\vert_{(A^\Omega)_e}) \subset \Ker \Phi. 
 \end{equation*}
 \par 
 Consequently,  the relation \eqref{e:main-dual-post-surjective-2} implies that 
 \begin{align}
 \label{e:dual-surjunctivity-proof-dim-ker-phi-1}
     \dim \Ker \Phi 
     & \geq \sum_{v \in V'} \dim \varphi_{v, r} (\Ker \tau\vert_{(A^\Omega)_e}) \\
     & = \sum_{v \in V'} \dim (\Ker \tau\vert_{(A^\Omega)_e}) \nonumber
     \\
     & \geq \vert V' \vert. \nonumber
 \end{align}
 \par 
Therefore, the Fiber dimension theorem (see e.g.~\cite[Proposition~5.23]{milne}) implies that: 
 \begin{align*}
    \dim \Phi(A^V) & = \dim A^V -\dim \Ker \Phi 
     &  
     \\ & 
     \leq 
      \vert V \vert  \dim A- \vert V' \vert
       & (\text{by } \eqref{e:dual-surjunctivity-proof-dim-ker-phi-1})
    \\ & \leq 
      (1-\varepsilon)^{-1} \vert V(3r) \vert \dim A - \frac{\vert V(3r) \vert}{\vert B_S(2r)\vert}
       & (\text{by } \eqref{e:sofic-V-2post-surjective})
    \\ &  
    \leq 
    \vert V(3r) \vert  \dim A  \left( (1-\varepsilon)^{-1} - 
    \frac{1}{\vert B_S(2r)\vert \dim A } \right)
     & (\text{by } \eqref{e:v'-and-v-post-surjective-3})
    \\
    & <   \vert V(3r) \vert \dim A 
    & (\text{by }  \eqref{e:post-surjective-var-epsilon-2})
    \\
    & = \dim A^{V(3r)}. 
\end{align*} 
\par 
However, since $\Phi(A^V)= A^{V(3r)}$ by \eqref{e:surj-post-phi-surjective-v-3r}, we arrive at a contradiction. Thus, we have proven the claim that $\dim \Ker \tau\vert_{(A^\Omega)_e}=0$. 
  \par 
  In what follows, we shall distinguish two cases according to whether $\dim H < \dim A^{B_S(r)}$ or $\dim H = \dim A^{B_S(r)}$.
  \par 
  \underline{\textbf{Case 1}}: $\dim H < \dim A^{\Omega}$. Then we infer from  \eqref{e:dual-post-surjecitve-weak-pre-injective-1} that 
  $$
  \dim \tau((A^\Omega)_e)
  =\dim \tau(H_e) 
  < \dim A^\Omega. 
  $$
  Therefore, the Fiber dimension theorem (cf.~\cite[Proposition~5.23]{milne}) implies that 
  $$
  \dim \Ker \tau\vert_{(A^\Omega)_e} = \dim A^\Omega  - \dim \tau((A^\Omega)_e)\geq 1 
  $$
which is a contradiction since  $ \dim \Ker \tau\vert_{(A^\Omega)_e}=0$. 
   \par 
  \underline{\textbf{Case 2}}: $\dim H = \dim A^{\Omega}$. Since 
  $ \dim \Ker \tau\vert_{(A^\Omega)_e}=0$, it follows from the Fiber dimension theorem (cf.  \cite[Proposition~5.23]{milne}) that 
$$
  \dim \tau(H_e)=\dim \tau((A^\Omega)_e)= \dim A^\Omega.
$$
\par 
We can write $H=H'\cup H''$ where $\dim H'' < \dim H$ and $H'$ is a union of some connected components of $A^\Omega$. It follows that 
$\tau(H_e) = \tau(H'_e)\cup \tau (H''_e)$.
\par 
Note that since $\tau(H_e)= \tau((A^\Omega)_e)$ is an algebraic group, all of its connected components have the same dimension $\dim \tau((A^\Omega)_e)$. 
\par 
On the other hand, since  
$\dim \tau(H''_e) \leq \dim H''< \dim A^\Omega= \dim \tau((A^\Omega)_e)$, we deduce that $\dim  \tau(H'_e)= \dim \tau((A^\Omega)_e)$ and also
\begin{equation}
\label{e:new-h'-post-surjective-1}
\tau((A^\Omega)_e) = \tau(H'_e)\cup \tau (H''_e)= \tau(H'_e).
\end{equation}
\par 
For every algebraic variety $U$, recall that $U_0$ denotes the set of connected components of $U$. Since $\Phi \colon A^V \to A^{V(3r)}$ is surjective, the induced homomorphism $\Phi_0 \colon X_0^V \to X_0^{V(3r)}$ is also surjective. 
\par 
Let $Y \subset X$ denote the neutral  connected component of $X$ and $B= Y(K)$. 
We deduce from \eqref{e:new-h'-post-surjective-1}   that $\tau(H' \times B^{B_S(r)\setminus \Omega})$ has nonempty intersection with every connected component of  $\tau(A^\Omega \times B^{B_S(r)\setminus \Omega})$. In particular, for every $v \in V'$, we find that 
\begin{equation}
    \label{e:new-h'-post-surjective-2}
 \Phi_0((A^{\psi_{v,r}(\Omega)} \times B^{V\setminus \psi_{v,r}(\Omega)})_0)  = 
\Phi_0((\varphi_{v,r,\Omega}(H') \times B^{V\setminus \psi_{v,r}(\Omega)})_0).
\end{equation}
\par 
Note that since $H'$ is a union of some connected components of $A^\Omega$, 
we have $(\varphi_{v,r,\Omega}(H') \times B^{V\setminus \psi_{v,r}(\Omega)})_0 \in X_0^V$. Therefore, in \eqref{e:new-h'-post-surjective-2}, 
the expression  $\Phi_0((\varphi_{v,r,\Omega}(H') \times B^{V\setminus \psi_{v,r}(\Omega)})_0)$ is well-defined. 
\par 
For each $v \in V'$, we consider the following subset of $X_0^{B_S(r)}$: 
\[
I_{v} \coloneqq (X_0^{B_S(r)}\setminus 
(A^{\psi_{v,r}(\Omega)} \times B^{B_S(r)\setminus \psi_{v,r}(\Omega)})_0) \cup 
(\varphi_{v,r,\Omega}(H') \times B^{B_S(r)\setminus \psi_{v,r}(\Omega)})_0. 
\]
\par 
Then since $(H')_0 \subsetneq X_0^{\Omega}$ is a proper subset, we deduce that: 
\begin{equation}
    \label{e:post-surjective-i-v-less-cardinality}
\vert I_v \vert \leq \vert X_0^{B_S(r)} \vert -1. 
\end{equation}
\par 
Moreover, since 
$\overline{V'} \coloneqq \coprod_{v \in V'} B(v,r)$ is a disjoint union of the balls $B(v,r)$ and since $\psi_{v,r}(\Omega) \subset B(v,r-1)$ for all $v \in V'$, 
we infer from \eqref{e:new-h'-post-surjective-2} that 
\begin{align*} 
    \Phi_0((A^V)_0) 
    & = \Phi_0 \left( X_0^{V\setminus \overline{V'}} \times \prod_{v \in V'} 
     I_v \right)  
\end{align*} 
\par 
Taking the cardinality of both sides, we deduce from the relations 
\eqref{e:post-surjective-i-v-less-cardinality}, \eqref{e:v'-and-v-post-surjective-3}, \eqref{e:sofic-V-2post-surjective}, and \eqref{e:post-surjective-var-epsilon-3-1}  that:  

  \begin{align*}
    |\Phi_0(X_0^V)| 
    & \leq  
    \vert   X_0^{V\setminus \overline{V'}} \times \prod_{v \in V'}  I_v \vert  
    & 
    \\
    & \leq   \vert X_0 \vert^{|V|-|V'||B_S(r)|} \left(\vert X_0 \vert^{|B_S(r)|}-1\right)^{|V'|} 
    & (\text{by } \eqref{e:post-surjective-i-v-less-cardinality})
    \\
&   = 
    \vert X_0 \vert^{|V|} \left( 1 - \vert X_0 \vert^{-|B_S(r)|} \right)^{|V'|} 
    & 
     \\ & 
     \leq 
     \vert X_0 \vert^{|V|}
    \left( 1 - \vert X_0 \vert^{-|B_S(r)|} \right)^{\frac{|V(3r)|}{|B_S(2r)|}} 
    & (\text{by } \eqref{e:v'-and-v-post-surjective-3})
    \\ & < 
    \vert X_0 \vert^{|V|}
    \left( 1 - \vert X_0 \vert^{-|B_S(r)|} \right)^{\frac{|V|}{2|B_S(2r)|}} 
    & (\text{by }\eqref{e:sofic-V-2post-surjective})
    \\ &  < 
    \vert X_0 \vert^{|V|}
    \vert X_0 \vert^{-\varepsilon|V|} 
    & (\text{by }\eqref{e:post-surjective-var-epsilon-3-1})
\\ &      = 
    \vert X_0 \vert^{(1-\varepsilon)|V|}
    & 
    \\ & < 
    \vert X_0 \vert^{|V(3r)|}  
    & (\text{by } \eqref{e:sofic-V-2post-surjective})
\end{align*} 
  which is again a contradiction since  $\Phi_0(X_0^V)=X_0^{V(3r)}$. 
   \par 
   Therefore, we can conclude that $\tau$ must be $(\sbullet[.9])$-pre-injective. The proof of the theorem is thus complete. 
\end{proof}

\subsection{A counter-example} 
Using nontrivial covering maps, we present a simple example which shows that in the class $CA_{algr}$, the implication 
$$
\mbox{post-surjectivity} \implies \mbox{ pre-injectivity}
$$
fails over any universe. 

\begin{example}
\label{ex:post-sur-not-pre-inj} 
Let $G$ be a group and let $E$ be a complex elliptic curve with origin $O \in E$. Consider the algebraic group cellular automaton $\tau \colon E^G \to E^G$ defined by $\tau(c)(g)= 2c(g)$ for every $c \in E^G$ and $g \in G$. We claim that $\tau$ is post-surjective but it is not pre-injective. 
\par
Indeed, consider the multiplication-by-2 map $[2] \colon E \to E$, $P \mapsto 2P$. Then $[2]$ is a  covering map of $E$ of degree $4$.  Hence, there exists $P \in I\setminus \{O\}$ such that $2P=O$. Consider $c \in E^G$ given by $c(1_G)=P$ and $c(g)=O$ if $g \in G \setminus \{1_G\}$. It is immediate that $c$ and $O^G$ are asymptotic and distinct but 
$\tau(c)=\tau(O^G)=O^G$. This proves that $\tau$ is not pre-injective. 
\par 
Now let $x, y\in E^G$ such that $y\vert_{G \setminus \Omega}=\tau(x)\vert_{G \setminus \Omega}$ for some finite subset $\Omega \subset G$. Since $[2]$ is surjective, we can find $p \in E^\Omega$ such that $2p(g)=y(g)$ for all $g \in \Omega$. Consider $z \in E^G$ given by $z\vert_{G \setminus \Omega}=x\vert_{G \setminus \Omega}$ and $z\vert_\Omega=p$ then it is clear that $\tau(z)=y$. This shows that $\tau$ is post-surjective. 
\end{example}

\section{Myhill property of $CA_{algr}$} 
\label{s:myhill}

We shall need the following technical result in the proof of Theorem~\ref{t:myhill-ca-algr-full}. 

\begin{proposition}
\label{p:**-implies-mdim}
Let $G$ be an amenable group and let $\FF=(F_i)_{i \in I}$ be a F\o lner net for $G$.  
Let $X$ be an algebraic group over an algebraically closed field $K$ and 
let $A \coloneqq X(K)$. 
Suppose that $\tau \in CA_{alg}(G,X,K)$ is $(\sbullet[.9] \sbullet[.9])$-pre-injective.
Then one has 
\begin{equation}
\label{e:mdim-max}
\mdim_\FF(\tau(A^G)) = \dim(X). 
\end{equation}
\end{proposition}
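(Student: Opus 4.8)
The plan is to reduce the mean dimension computation to an application of Lemma~\ref{l:inv-mdim-non-max}, applied not to $\tau(A^G)$ directly but to a complementary subshift, and to combine this with a lower bound coming from the fact that $\tau(A^G)$ is ``large'' in every finite window. More precisely, first I would recall that $\mdim_\FF(\tau(A^G)) \le \dim(A^G)/|F_i|$-type bounds give immediately $\mdim_\FF(\tau(A^G)) \le \dim(X)$, so the whole content is the reverse inequality $\mdim_\FF(\tau(A^G)) \ge \dim(X)$. The hypothesis is $(\sbullet[.9]\sbullet[.9])$-pre-injectivity, i.e.\ $\dim \tau((A^\Omega)_e) = \dim A^\Omega = |\Omega|\dim X$ for every finite $\Omega \subset G$.

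The key step is to relate $\dim \tau((A^\Omega)_e)$ to $\dim \bigl(\tau(A^G)_{F}\bigr)$ for a suitable window $F$. The point is that for a memory set $M$ of $\tau$, one has $\tau((A^\Omega)_e)\vert_{\Omega^+}$ (where $\Omega^+ = \Omega M$ or similar) is a projection of $\tau(A^G)_{F}$ for $F \supset \Omega M$, because any configuration in $(A^\Omega)_e$ extends to a global configuration and $\tau$ is computed locally; conversely the restriction of $\tau(A^G)$ to $F$ surjects onto enough of $\tau((A^\Omega)_e)$. So I would take a Følner sequence element $F_i$, set $\Omega_i = \{g \in F_i : gM^{-1}M \subset F_i\}$ (the ``interior''), and argue $\dim\bigl(\tau(A^G)_{F_i}\bigr) \ge \dim \tau((A^{\Omega_i})_e) = |\Omega_i|\dim X$. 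Dividing by $|F_i|$ and using $|\Omega_i|/|F_i| \to 1$ (Følner property, since $M$ is finite) yields $\mdim_\FF(\tau(A^G)) \ge \dim X$. The projection argument needs care: one should check that $\tau$ restricted to $(A^{\Omega_i})_e$, composed with restriction to $F_i$, has fibers of dimension at most $\dim A^{\Omega_i} - \dim \tau((A^{\Omega_i})_e) = 0$, so the image in $A^{F_i}$ has dimension $\ge |\Omega_i|\dim X$; then invoke $\tau(A^G)_{F_i} \supset \{(\tau c)\vert_{F_i} : c \in (A^{\Omega_i})_e\}$, which holds since each such $c$ lies in $A^G$.

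Alternatively, and perhaps more cleanly, I would invoke Lemma~\ref{l:inv-mdim-non-max} in contrapositive form applied to $\Gamma = \tau(A^G)$: if $\mdim_\FF(\tau(A^G)) < \dim X$ were false-friendly we'd want to derive that $\tau(A^G)_{gE} \subsetneq A^{gE}$ along a tiling, contradicting $(\sbullet[.9]\sbullet[.9])$-pre-injectivity — but actually the logic runs the other way, so the direct Følner estimate above is the honest route, and Lemma~\ref{l:inv-mdim-non-max} is the tool used later in the Moore direction rather than here. I expect the main obstacle to be the bookkeeping in the projection/fiber-dimension argument: ensuring that the restriction map from $\tau((A^{\Omega_i})_e)$ (which lives in $A^{G}$, effectively supported in $\Omega_i M$) down to $A^{F_i}$ is injective enough to preserve dimension, which follows from $(\sbullet[.9]\sbullet[.9])$-pre-injectivity only after one checks the fiber of $\tau\vert_{(A^{\Omega_i})_e}$ over a point of $A^{F_i}$ coincides with (or maps finitely to) the fiber of $\tau\vert_{(A^{\Omega_i})_e}$ over the corresponding point of $A^{\Omega_i M}$. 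This is routine given that $\Omega_i M \subset F_i$ by construction, so the restriction $A^{F_i} \to A^{\Omega_i M}$ is already injective on the relevant image, and then $(\sbullet[.9]\sbullet[.9])$-pre-injectivity gives $\dim \tau((A^{\Omega_i})_e) = |\Omega_i|\dim X$ directly. Assembling these estimates and passing to the $\limsup$ completes the proof.
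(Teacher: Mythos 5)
Your argument is correct and essentially complete. The trivial bound $\dim(\tau(A^G)_{F_i})\le |F_i|\dim X$ gives $\mdim_\FF(\tau(A^G))\le \dim X$, and for the reverse inequality your key observations hold: for $c\in (A^{\Omega})_e$ the image $\tau(c)$ agrees with the constant configuration $\tau(e^G)$ outside $\Omega M^{-1}$, so restriction to any window containing $\Omega M^{-1}$ is injective on $\tau((A^{\Omega})_e)$ and preserves its dimension; taking $\Omega_i$ to be the $M^{-1}$-interior of $F_i$ (after enlarging $M$ so that $1_G\in M=M^{-1}$, which fixes the minor $M$ versus $M^{-1}$ bookkeeping in your write-up) then yields $\dim(\tau(A^G)_{F_i})\ge \dim \tau((A^{\Omega_i})_e)=|\Omega_i|\dim X$ by $(\sbullet[.9]\sbullet[.9])$-pre-injectivity, and $|\Omega_i|/|F_i|\to 1$ by the F\o lner property, so the $\limsup$ is at least $\dim X$. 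The fiber-dimension digression is unnecessary (coordinate-restriction injectivity already preserves dimension), and you correctly discard the attempted contrapositive use of Lemma~\ref{l:inv-mdim-non-max}, which indeed plays no role here. This is a genuinely different route from the paper: the paper's proof of Proposition~\ref{p:**-implies-mdim} is a one-line citation of \cite[Proposition~6.5]{cscp-alg-goe} combined with the equivalence of $(**)$- and $(\sbullet[.9]\sbullet[.9])$-pre-injectivity from \cite[Proposition~8.3]{phung-2020}, whereas your direct F\o lner-interior estimate is self-contained, uses no group structure on the alphabet beyond the base point $e$ (so it works verbatim for $CA_{alg}$ as stated), and avoids both external references, at the cost of redoing an argument that is essentially the content of the cited proposition.
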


\begin{proof}
It is a direct consequence of \cite[Proposition~6.5]{cscp-alg-goe}. It suffices to observe there that $CA_{algr} \subset CA_{alg}$ and  in the class $CA_{algr}$, the two notions  $(**)$-pre-injectivity and $(\sbullet[.9] \sbullet[.9])$-pre-injectivity are in fact equivalent by \cite[Proposition~8.3]{phung-2020}. 
\end{proof}

We can now state and prove the Myhill property for the class $CA_{algr}$, which is the content of Theorem~\ref{t:gromov-answer-algr-full}.(i).

\begin{theorem}
\label{t:myhill-ca-algr-full}
Let $G$ be an amenable group and let $X$ be an algebraic group over $K$. Suppose that 
$\tau \in CA_{algr}(G,X,K)$ is pre-injective. Then $\tau$ is surjective. 
\end{theorem}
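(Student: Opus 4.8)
\emph{Plan.} The strategy is the classical entropy argument for the Myhill direction of the Garden of Eden theorem, with the algebraic mean dimension $\mdim_\FF$ playing the role of entropy. First I would dispose of the degenerate case $\dim X = 0$: then $A = X(K)$ is a finite set carrying a group structure, $\tau$ is an ordinary cellular automaton over the finite alphabet $A$, pre-injectivity of $\tau$ coincides with the usual symbolic notion, and hence the Garden of Eden theorem over amenable groups \cite{ceccherini} already gives that $\tau$ is surjective. So from now on assume $\dim X > 0$ and fix a F\o lner net $\FF = (F_i)_{i\in I}$ for $G$.

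Next I would record that pre-injectivity forces the image $\tau(A^G)$ to have maximal algebraic mean dimension. Indeed, $\tau$ is $(\sbullet[.9])$-pre-injective: if there were a finite $\Omega \subset G$ and a Zariski closed $H \subsetneq A^\Omega$ with $\tau((A^\Omega)_e) = \tau(H_e)$, then, picking $c \in (A^\Omega)_e \setminus H_e$ (possible since $H_e \subsetneq (A^\Omega)_e$), we would obtain $d \in H_e$ with $\tau(d) = \tau(c)$; but $c$ and $d$ are both supported on $\Omega$, hence asymptotic, so pre-injectivity gives $c = d$, contradicting $c \notin H_e$. By Corollary~\ref{c:moore-one-star-implies-two-star-pre-injectivity}, $\tau$ is therefore $(\sbullet[.9]\sbullet[.9])$-pre-injective, and since $CA_{algr} \subset CA_{alg}$, Proposition~\ref{p:**-implies-mdim} yields
\[
\mdim_\FF(\tau(A^G)) = \dim X .
\]

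Then I would argue by contradiction. Suppose $\tau$ is not surjective and set $\Gamma \coloneqq \tau(A^G)$. Since $A^G$ is an algebraic subshift of finite type, $\Gamma$ is an algebraic sofic subshift, hence a closed subshift of $A^G$ by \cite[Theorem~8.1]{cscp-invariant-ca-alg}. As $\Gamma$ is closed for the prodiscrete topology and $\Gamma \neq A^G$, there is a finite nonempty $D \subset G$ with $\Gamma_D \subsetneq A^D$. Letting $M$ be a memory set of $\tau$, the set $\Gamma_D$ is the image of $A^{DM}$ under the morphism of algebraic varieties determined by the local rule of $\tau$, hence constructible; moreover, since $\tau$ is a group homomorphism, $\Gamma$ is a subgroup of $A^G$ and $\Gamma_D$ a subgroup of $A^D$. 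A constructible subgroup of an algebraic group is Zariski closed, so $\Gamma_D$ is a \emph{proper closed} subgroup of $A^D$, and by shift-invariance of $\Gamma$ we get $\Gamma_{gD} \subsetneq A^{gD}$ for every $g \in G$. Applying Proposition~\ref{p:tilings-exist} with $E \coloneqq D$ and $E' \coloneqq DD^{-1}$ produces a $(D, DD^{-1})$-tiling $T \subset G$; then $\Gamma$ satisfies condition~(C) of Lemma~\ref{l:inv-mdim-non-max} relative to $T$, whence $\mdim_\FF(\Gamma) < \dim X$. This contradicts the displayed equality, so $\tau$ must be surjective.

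The step I expect to require the most care is converting the failure of surjectivity into condition~(C). One needs two things: that non-surjectivity is already visible on a finite window as a genuinely \emph{proper} restriction $\Gamma_D \subsetneq A^D$ — which rests on $\Gamma$ being closed in the prodiscrete topology — and that this restriction is Zariski closed rather than merely constructible, which is exactly where the group structure of $X$ is indispensable (constructible subgroups of algebraic groups are closed). Once these are in place, propagating the bad window along the tiling $T$ and feeding it into Lemma~\ref{l:inv-mdim-non-max} is routine.
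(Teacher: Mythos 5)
Your argument is correct up to and including the extraction of a finite window $D$ with $\Gamma_D \subsetneq A^D$ a proper Zariski-closed subgroup of $A^D$ (the case $\dim X=0$, the derivation of $(\sbullet[.9])$- and $(\sbullet[.9]\sbullet[.9])$-pre-injectivity, $\mdim_\FF(\Gamma)=\dim X$, closedness of the image, and the constructible-subgroup observation are all fine). The gap is the final step: from ``$\Gamma_{gD}\subsetneq A^{gD}$ proper closed along a tiling'' you invoke Lemma~\ref{l:inv-mdim-non-max} to get $\mdim_\FF(\Gamma)<\dim X$. When $X$ is disconnected, a proper closed subgroup of $A^D$ can have \emph{full} dimension (e.g.\ a union of connected components), and then no mean-dimension deficit arises. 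The lemma is imported from the irreducible setting of \cite{cscp-alg-goe}, where properness forces a dimension drop; read literally for arbitrary $X$ it is false, and the paper itself supplies the counterexample: for $X=\Z/4\Z$, $\tau(c)(g)=2c(g)$ (Proposition~\ref{p:myhill-ca-algr-two-star-not-hold}), the image $\Gamma=Y^G$ with $Y=\Ker\varphi$ is closed, $\Gamma_{gD}=Y^{gD}\subsetneq X^{gD}$ is proper closed for every window along any tiling, yet $\mdim_\FF(\Gamma)=0=\dim X$. (A positive-dimensional variant, $X=\G_a\times\Z/2\Z$ with $\tau$ projecting away the finite factor, shows the problem is not confined to $\dim X=0$.) More structurally, your proof uses pre-injectivity only through the consequence $\mdim_\FF(\tau(A^G))=\dim X$, i.e.\ through $(\sbullet[.9]\sbullet[.9])$-pre-injectivity; Proposition~\ref{p:myhill-ca-algr-two-star-not-hold} shows precisely that this is not enough to force surjectivity, so no argument of this shape can close the disconnected case --- which is exactly the new content of the theorem (the connected case was already in \cite{phung-2020}).

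What your mean-dimension argument genuinely yields is $\dim\Gamma_E=\dim A^E$ for every finite $E$, and this is also the first half of the paper's proof. The paper then supplies the missing ingredient you omit: it passes to the induced cellular automaton $\tau_0\colon X_0^G\to X_0^G$ on the finite alphabet of connected components (Lemma~\ref{l:symbolic-map-conn-also-algr}), uses pre-injectivity of $\tau$ a second time --- via a fiber-dimension argument on the connected components of $A^{EM}$ --- to show $\tau_0$ is pre-injective, applies the classical Garden of Eden theorem over amenable groups to conclude $\tau_0$ is surjective, and only then combines ``$\Gamma_E$ is a closed subgroup of full dimension meeting every connected component of $A^E$'' to get $\Gamma_E=A^E$, whence $\Gamma=A^G$ by closedness in the prodiscrete topology. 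To repair your proposal you would need to add this (or an equivalent) connected-component step; as written, the contradiction you derive does not exist.
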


\begin{proof} 
Let $A=X(K)$ and let $\Gamma=\tau(A^G)$. Then it follows from \cite[Theorem~5.1]{phung-2020} that $\Gamma$ is closed in $A^G$ with respect to the prodiscrete topology. 
\par 
Since $\tau$ is pre-injective, it is   
$(\sbullet[.9] \sbullet[.9])$-pre-injective (cf.~\cite[Proposition~8.3]{phung-2020}). We can thus deduce from  Proposition~\ref{p:**-implies-mdim} that $\mdim_\FF(\Gamma) = \dim(X)$ where $\FF=(F_i)_{i \in I}$ is an arbitrary fixed F\o lner net for $G$. 
\par 
Therefore, it follows immediately from Lemma~\ref{l:inv-mdim-non-max} and Proposition~\ref{p:tilings-exist} that we have an equality of Krull dimensions  $\dim \Gamma_E= \dim X^E$ for every finite subset $E \subset G$. 
\par
On the other hand,  \cite[Theorem~7.1]{cscp-invariant-ca-alg} implies that $\Gamma_E$ is an algebraic subgroup of $A^E$ for every finite subset $E \subset G$. 
\par
Now consider the induced group cellular automaton $\tau_0 \colon X_0^G \to X_0^G$ where the alphabet $X_0$ is the set of connected components of $X$ (cf. Lemma~\ref{l:symbolic-map-conn-also-algr}). We are going to show that $\tau_0$ is also pre-injective. 
\par 
Let $f\colon A^M \to A$, where $M \subset G$ is a finite symmetric subset, be a  homomorphism of algebraic groups which is also a local defining map of $\tau$. 
\par 
Suppose on the contrary that $\tau_0$ is not pre-injective. Consequently, we can find a finite subset $E \subset G$ and subvarieties $V_1, V_2 \subset A^E$ and a subvariety $U \subset A^{ME\setminus E}$ with the following properties: 
\begin{enumerate} [\rm (a)]
    \item $U$ is a connected component of 
$A^{EM\setminus E}$ and $V_1, V_2$ are distinct connected components of $A^E$; 
\item 
the images $\tau^+_E(U \times V_1)$ and  $\tau^+_E(U \times V_2)$ belong to the same connected component $Z$ of the algebraic group $A^E$, where 
the induced homomorphism $\tau^+_E \colon A^{EM} \to A^E$ of algebraic groups is given by 
$\tau^+_E(c)(g)= f((g^{-1}c)\vert_M)$ for every $c \in A^{EM}$ and $g \in E$. 
\end{enumerate}
 \par 
Let us choose an arbitrary point $u \in U$. Then as $\tau$ is pre-injective and as $\dim V_i=\dim Z= \dim A^E$, we must have 
$\tau^+_E (\{u\} \times V_i)=Z$ for $i=1,2$. 
\par 
Indeed, since otherwise, we would have  $\dim \tau^+_E (\{u\} \times V_i) < \dim Z= \dim V_i$. Note that $\{u\} \times V_i$ is an irreducible variety. 
Therefore, by applying  \cite[Proposition~2.11]{cscp-alg-goe}, 
we can find distinct points $s,t \in V_i$  such that $\tau^+_E(u,s)=\tau^+_E(u,t)$. Hence, the map 
$\tau^+_E\vert_{\{u\} \times V_i}$ cannot be injective. It follows that $\tau$ is not pre-injective, which is a contradiction. 
\par 
Therefore, for any $z \in Z$, we can find $v_i \in V_i$ for $i=1,2$ such that $\tau^+_E(u, v_i)=z$. Since $V_1$ and $V_2$ are disjoint, $v_1 \neq v_2$ and  it follows that $\tau$ is not pre-injective which is a contradiction. We conclude that 
$\tau_0$ is indeed pre-injective.
\par 
Hence, since the alphabet $X_0$ is finite and $G$ is an amenable group, we can deduce from the classical Garden of Eden theorem for finite alphabets that $\tau_0$ is surjective.
\par 
Let $E\subset G$ be any finite subset. As $\tau_0$ is surjective, we deduce from the definition of $\tau_0$ that $\Gamma_E$ contains points in every connected component of $A^E$. On the other hand, we have seen that $\Gamma_E$ is an algebraic subgroup of $A^E$ such that $\dim \Gamma_E= \dim A^E$. It follows that $\Gamma_E=X^E$ for every finite subset $E \subset G$. 
\par 
Since the image $\Gamma=\tau(A^G)$ is closed in $A^G$ with respect to the prodiscrete topology, we find that 

$$
\Gamma= \varprojlim_{E \subset G} \Gamma_E= \varprojlim_{E \subset G} A^E= A^G.
$$

\par 
It follows that $\tau$ is surjective and the proof is complete. 
\end{proof}

Our next result shows that in the class $CA_{algr}$, the implication  
$$
(\sbullet[.9] \sbullet[.9])\mbox{-pre-injectivity} \implies \mbox{ surjectivity}
$$

does not hold in any universe $G$.  

\begin{proposition}
\label{p:myhill-ca-algr-two-star-not-hold} 
Let $G$ be a group. Then there exist a finite algebraic group $X$ over $K$ and $\tau \in CA_{algr}(G,X,K)$  such that $\tau$ is 
$(\sbullet[.9] \sbullet[.9])$-pre-injective but is not surjective. 
\end{proposition}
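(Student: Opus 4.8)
The plan is to reuse (almost verbatim) the cellular automaton already constructed in the proof of Proposition~\ref{p:moore-one-star-no-implied-by-two-star-pre-injectivity}. First I would take $X = \Z/4\Z$, regarded as a finite étale (hence zero-dimensional) algebraic group over $K$, and let $\varphi \colon X \to X$ be the homomorphism of algebraic groups given by $x \mapsto 2x$. Set $Y \coloneqq \varphi(X) = \Ker \varphi \simeq \Z/2\Z$, a proper algebraic subgroup of $X$, and let $A = X(K) = \Z/4\Z$. Define $\tau \colon A^G \to A^G$ by $\tau(c)(g) = \varphi(c(g))$ for all $c \in A^G$ and $g \in G$. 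Since $\tau$ admits the memory set $M = \{1_G\}$ with local defining map $\varphi$, which is a homomorphism of algebraic groups, we have $\tau \in CA_{algr}(G,X,K)$.

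Next I would verify $(\sbullet[.9]\sbullet[.9])$-pre-injectivity, which here is essentially automatic. Because $\dim X = 0$, for every finite subset $\Omega \subset G$ the variety $A^\Omega$ is zero-dimensional, so $\dim \tau((A^\Omega)_e) = 0 = \dim A^\Omega$; thus the equality required in Definition~\ref{d:weak-pre}.(b) holds trivially, and $\tau$ is $(\sbullet[.9]\sbullet[.9])$-pre-injective.

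Finally, for the failure of surjectivity I would observe that $\tau(A^G) \subseteq Y^G$: indeed $\tau(c)(g) = 2c(g) \in \{0,2\}$ for every $g$. Since $Y \subsetneq X$ (for instance $1 \in \Z/4\Z$ does not lie in $Y$), the constant configuration equal to $1$ is not in the image of $\tau$, so $\tau$ is not surjective. This completes the construction; taking this $X$ and $\tau$ proves the proposition.

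I expect no real obstacle here: the content of the statement collapses once one notices that for a zero-dimensional alphabet $(\sbullet[.9]\sbullet[.9])$-pre-injectivity is a vacuous condition, so the only thing to exhibit is a finite group cellular automaton that fails to be surjective, which is immediate from the non-surjective endomorphism $x \mapsto 2x$ of $\Z/4\Z$. (One could equally well phrase the example over the trivial group as a sanity check: even there $\varphi$ is a non-surjective endomorphism of a finite group.)
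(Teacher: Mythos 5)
Your proposal is correct and follows essentially the same route as the paper: the paper's proof simply reuses the example $X=\Z/4\Z$, $\varphi(x)=2x$ from Proposition~\ref{p:moore-one-star-no-implied-by-two-star-pre-injectivity}, noting that $(\sbullet[.9]\sbullet[.9])$-pre-injectivity holds trivially since $\dim X=0$ while $\tau(X^G)=Y^G\subsetneq X^G$ kills surjectivity. Your write-up just spells out that example explicitly rather than citing it.
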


\begin{proof}
Let $X$ and $\tau \in CA_{algr}(G,X,K)$ be given by Proposition~\ref{p:moore-one-star-no-implied-by-two-star-pre-injectivity}. Keep the notations as in the proof of Proposition~\ref{p:moore-one-star-no-implied-by-two-star-pre-injectivity}. Then we know that 
$\tau$ is 
$(\sbullet[.9] \sbullet[.9])$-pre-injective but it is not surjective since 
$\tau(X^G)=Y^G  \subsetneq X^G$. The proof is thus complete. 
\end{proof}

\section{Moore property of $CA_{algr}$} 
\label{s:moore}

To complete the proof of Theorem~\ref{t:gromov-answer-algr-full}, we will prove the following Moore property of the class $CA_{algr}$. 

\begin{theorem}
\label{t:moore-ca-algr-full} 
Let $G$ be an amenable group and let $X$ be an algebraic group over an algebraically closed field $K$. Suppose that $\tau \in CA_{algr}(G,X,K)$  surjective. Then $\tau$ is both  $(\sbullet[.9])$-pre-injective and $(\sbullet[.9] \sbullet[.9])$-pre-injective.  
\end{theorem}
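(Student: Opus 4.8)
The plan is to reduce, via Corollary~\ref{c:moore-one-star-implies-two-star-pre-injectivity}, to proving that $\tau$ is $(\sbullet[.9])$-pre-injective, and to do this in two independent steps: (1) surjectivity $\Rightarrow$ $(\sbullet[.9]\sbullet[.9])$-pre-injectivity, and (2) surjectivity together with $(\sbullet[.9]\sbullet[.9])$-pre-injectivity $\Rightarrow$ $(\sbullet[.9])$-pre-injectivity. For step (1), suppose $\tau$ is not $(\sbullet[.9]\sbullet[.9])$-pre-injective, so $\dim\tau((A^\Omega)_e)<\dim A^\Omega$ for some finite $\Omega\subset G$; then the kernel $N$ of the homomorphism of algebraic groups $\tau\vert_{(A^\Omega)_e}$ has dimension $\geq 1$, say $N=N'\times\{e\}^{G\setminus\Omega}$ with $N'\leq A^\Omega$ closed of dimension $\geq 1$. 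Since $\tau$ is an equivariant group homomorphism, $\tau(g\cdot n)=g\cdot\tau(n)=e^G$ for all $g\in G$, $n\in N$, so every $G$-translate of $N$ lies in $\ker\tau$. Fixing a memory set $M$ and, via Proposition~\ref{p:tilings-exist}, an $(\Omega,\Omega\Omega^{-1})$-tiling $T$, the induced homomorphism of algebraic groups $\tau^+_F\colon A^{FM}\to A^F$, whose image is $\Gamma_F$ with $\Gamma\coloneqq\tau(A^G)$, has kernel containing a copy of $N'$ supported on each tile $g\Omega\subset FM$; as these supports are pairwise disjoint, $\dim\ker\tau^+_F\geq\#\{g\in T:g\Omega\subset FM\}\cdot\dim N'$. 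Choosing $F$ in a F\o lner net $\FF$ large enough that $\vert FM\setminus F\vert$ is small compared with this number of tiles (which is $\gtrsim\vert F\vert/\vert\Omega\vert$) gives $\dim\Gamma_F=\dim A^{FM}-\dim\ker\tau^+_F<\dim A^F$, so the closed subgroup $\Gamma_F\subsetneq A^F$ is proper — contradicting surjectivity of $\tau$ (equivalently, by $G$-invariance of $\Gamma$ and Lemma~\ref{l:inv-mdim-non-max} applied to a tiling from Proposition~\ref{p:tilings-exist}, $\mdim_\FF(\Gamma)<\dim X$, impossible as $\Gamma=A^G$).

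For step (2), first note that surjectivity of $\tau$ forces surjectivity of the induced cellular automaton $\tau_0\colon X_0^G\to X_0^G$ (Lemma~\ref{l:symbolic-map-conn-also-algr}): lift any $\bar c\in X_0^G$ to some $c\in A^G$ with $i_{X^G}(c)=\bar c$, write $c=\tau(d)$, and apply \eqref{e:symbolic-induced-map-functorial-ca-alg-1}. Since $X_0$ is finite and $G$ amenable, the classical Garden of Eden theorem over amenable groups \cite{ceccherini} then makes $\tau_0$ pre-injective. Now suppose $\tau$ is not $(\sbullet[.9])$-pre-injective: $\tau(H_e)=\tau((A^\Omega)_e)$ for some finite $\Omega$ and proper closed $H\subsetneq A^\Omega$. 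By $(\sbullet[.9]\sbullet[.9])$-pre-injectivity, $\dim\tau((A^\Omega)_e)=\dim A^\Omega$, hence $\ker\tau\vert_{(A^\Omega)_e}$ is $0$-dimensional and $\dim H=\dim A^\Omega$. Let $\bar e\in X_0$ be the connected component of the neutral element and let $H_0\subseteq X_0^\Omega$ be the set of components of $X^\Omega$ meeting $H$. If $H$ misses a connected component of $A^\Omega$, then $H_0\subsetneq X_0^\Omega$ and applying $i_{X^G}$ to $\tau(H_e)=\tau((A^\Omega)_e)$ together with \eqref{e:symbolic-induced-map-functorial-ca-alg-1} gives $\tau_0((H_0)_{\bar e})=\tau_0((X_0^\Omega)_{\bar e})$, contradicting pre-injectivity of $\tau_0$. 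Otherwise $H$ meets every component; since $\tau_0\vert_{(X_0^\Omega)_{\bar e}}$ is bijective, the algebraic group $\tau((A^\Omega)_e)$ (a genuine algebraic group, being supported in $\Omega M$) has exactly as many connected components as $(A^\Omega)_e$, so by \eqref{e:symbolic-map-functorial} the homomorphism $\tau\vert_{(A^\Omega)_e}$ maps the connected components of $(A^\Omega)_e$ bijectively, each by a surjection, onto those of its image. Picking a component $C$ of $A^\Omega$ with $H\cap C\subsetneq C$ (possible as $H\subsetneq A^\Omega$), with $C_e$ the corresponding component of $(A^\Omega)_e$ and $Z=\tau(C_e)$, and using that distinct components of $(A^\Omega)_e$ have disjoint images, one obtains $\tau(H_e\cap C_e)=Z$; but $H_e\cap C_e$ is a proper closed subset of the irreducible variety $C_e$, so $\dim Z\leq\dim(H_e\cap C_e)<\dim C_e=\dim A^\Omega$, contradicting $\dim Z=\dim\tau((A^\Omega)_e)=\dim A^\Omega$. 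Hence $\tau$ is $(\sbullet[.9])$-pre-injective, and by Corollary~\ref{c:moore-one-star-implies-two-star-pre-injectivity} also $(\sbullet[.9]\sbullet[.9])$-pre-injective.

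The step I expect to be the main obstacle is the final case of step (2), where $H$ still meets every connected component of $A^\Omega$: here the dimension count alone is inconclusive, and one must feed the finite-alphabet Garden of Eden theorem for $\tau_0$ into a component-by-component analysis of the algebraic group $\tau((A^\Omega)_e)$ in order to produce the local identity $\tau(H_e\cap C_e)=Z$ and conclude. The F\o lner packing estimate in step (1) is classical in flavour, but the bookkeeping among the tiling $T$, the memory set $M$, and the boundary of the F\o lner set must be carried out with some care.
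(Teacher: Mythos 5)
Your proof is correct, but it is organized differently from the paper's. You first prove surjectivity $\Rightarrow(\sbullet[.9]\sbullet[.9])$-pre-injectivity by a hands-on argument (kernel of $\tau\vert_{(A^\Omega)_e}$ of dimension $\geq 1$, disjoint translated copies of it inside $\Ker \tau^+_F$ over a tiling, fiber dimension theorem, F\o lner bookkeeping), and then use this, together with pre-injectivity of the component automaton $\tau_0$ (obtained from surjectivity of $\tau_0$ via the Moore direction of the classical Garden of Eden theorem), to localize the identity $\tau(H_e)=\tau((A^\Omega)_e)$ to a single connected component $C$ on which $H\cap C$ is proper, and conclude by a dimension count there; the disjointness of the images of distinct components of $(A^\Omega)_e$, which is the crux of your second step, does indeed follow from pre-injectivity of $\tau_0$ exactly as you indicate. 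The paper instead argues directly for $(\sbullet[.9])$-pre-injectivity: when $\dim H<\dim A^E$ it invokes the algebraic mean dimension machinery (Lemma~\ref{l:inv-mdim-non-max} and the results of \cite{cscp-alg-goe}), when $\dim H=\dim A^E$ but $\dim\tau((A^E)_e)<\dim A^E$ it reduces to the previous case via Lemma~\ref{l:moore-lamma-ca-algr-full-1}, and in the remaining case it discards the lower-dimensional part of $H$ to write $\tau(H_e)=\tau(V_e)$ with $V$ a proper union of components, which exhibits a failure of pre-injectivity of $\tau_0$ and hence, by the same finite-alphabet Garden of Eden theorem used contrapositively, a failure of surjectivity; $(\sbullet[.9]\sbullet[.9])$-pre-injectivity is then deduced from Corollary~\ref{c:moore-one-star-implies-two-star-pre-injectivity}, just as you do at the end. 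So both arguments rest on the same two pillars (a F\o lner/tiling dimension count and the finite-alphabet Garden of Eden theorem applied to $\tau_0$), assembled differently: your route is more self-contained in that it bypasses the mean dimension formalism and Lemma~\ref{l:moore-lamma-ca-algr-full-1}, at the cost of the extra packing estimate and the component-localization argument; the paper's route is shorter where it matters because discarding the lower-dimensional part of $H$ immediately produces a violation of pre-injectivity for $\tau_0$, and it reuses existing machinery. Two small points to tighten in your write-up: in the packing step you should enlarge the memory set $M$ so that the tiles meeting $F$ have $g\Omega\subset FM$, and in the last case the equality $\dim\tau(C_e)=\dim\tau((A^\Omega)_e)$ deserves a word (the kernel is finite, or $\tau(C_e)$ is a coset of the finite-index subgroup $\tau(((A^\Omega)_e)^\circ)$); neither is a gap.
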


\begin{proof}
Let $A \coloneqq X(K)$ and let  $\FF$ be a F\o lner net for $G$. 
Thanks to Corollary~\ref{c:moore-one-star-implies-two-star-pre-injectivity}, it suffices to show that 
$\tau$ is $(\sbullet[.9])$-pre-injective. For this, 
we shall proceed by contradiction. 
\par 

Suppose that $\tau$ is not $(\sbullet[.9])$-pre-injective. 
Thus, there exist a finite subset $E\subset G$ and a  proper closed subset $H \subsetneq A^E$ such that \begin{equation}
\label{e:dim-hyperplan-ca-algr-full}
\tau((A^E)_e)=\tau(H_e). 
\end{equation}
\par 
We will distinguish two cases according to whether $\dim H = \dim A^E$. 
\par 
\underline{\textbf{Case 1}}: $\dim H < \dim A^E$. By Proposition~\ref{p:tilings-exist}, we can find a finite subset $E' \subset G$
such that $G$ contains an
$(E,E')$-tiling $T$. 
For every $t \in T$, we define $H_t \subset A^{tE}$ to be the image of $H$ under the 
canonical bijective map $A^E \to A^{t E}$ that is induced by the left-multiplication by $t^{-1}$. 
Since $\tau$ is $G$-equivariant, we deduce from \eqref{e:dim-hyperplan-ca-algr-full} that 
for each $t\in T$, we have that 
\begin{equation}
\label{e:dim-hyperplan-g}
\tau((A^{tE})_p)=\tau((H_t)_p) \quad \text{ for all } p \in A^{G\setminus tE}.
\end{equation}
\par
Consider the subset $\Gamma \subset A^G$ defined by
\[
\Gamma \coloneqq A^{G\setminus TE} \times \prod_{t \in T}H_t .
\]
\par 
We can check that $\tau(A^G) = \tau(\Gamma)$ (cf. the proof of \cite[Proposition~6.6]{cscp-alg-goe}). Therefore, we find that 
\begin{align*}
\mdim_\FF(\tau(A^G)) &= \mdim_\FF(\tau(\Gamma)) \\
&\leq \mdim_\FF(\Gamma) && \text{(by \cite[Proposition~5.1]{cscp-alg-goe})} \\ 
&< \dim(X) && \text{(by Lemma~\ref{l:inv-mdim-non-max})} ,
\end{align*}
which contradicts the surjectivity of $\tau$. 
Observe  that the hypothesis of Lemma~\ref{l:inv-mdim-non-max} is satisfied 
since we have $\dim H_t < \dim A^E$ for all $t \in T$.  
\par 
\underline{\textbf{Case 2}}: $\dim H = \dim A^E$. 
Then we distinguish two subcases according to whether $\dim \tau((A^E)_e)= \dim A^E$ as follows: 
\par 

\underline{\textbf{Case 2a}}: $\dim \tau((A^E)_e) <  \dim A^E$. Then Lemma~\ref{l:moore-lamma-ca-algr-full-1} tells us that there exists a proper closed subset $Z \subset A^E$ such that $\dim Z < \dim A^E$ and that $\tau((A^E)_e)=\tau(Z_e)$. We are thus in the situation of {Case 1} and obtain a contradiction. 
\par 
\underline{\textbf{Case 2b}}:  $\dim \tau((A^E)_e) =  \dim A^E$. Hence, we deduce that 
$$ 
\dim \tau((A^E)_e) = \dim \tau(H_e)= \dim H=   \dim A^E.
$$
\par 
Let $V_i$, $i \in I$, be the connected components of the algebraic group $A^E$ where $I$ is a finite set.  As $H \subset A^E$ and $\dim H = \dim A^E$, we can write $H= Z \cup V$ where $V=\cup_{j \in J}V_j$ for some $J \subsetneq I$ and $Z$ is a closed subset of $A^E$ such that $\dim Z < \dim A^E$. We find that 
$$
\tau(H_e) = \tau(Z_e) \cup \tau(V_e) 
$$
\par 
Remark that $\tau(H_e)= \tau((A^E)_e)$ is an algebraic group, all of its connected components are therefore irreducible and have the same dimension. But since  $\dim \tau(Z_e) \leq \dim Z < \dim A^E= \dim \tau(H_e)$, we deduce immediately that $\tau(H_e) = \tau(V_e)$. 
\par 
Let us consider the induced cellular automaton $\tau_0 \colon X_0^G \to X_0^G$ where the alphabet $X_0$ is the set of connected components of $X$. 
Let $\varepsilon \in X_0$ denote the connected component of $X$ containing $e$. 
We claim that $\tau_0$ is not pre-injective. Indeed,  since $J \subsetneq I$ and
$$
\tau((A^E)_e)= \tau(H_e) = \tau(V_e) = \tau ((\cup_{j \in J}V_j)_e),
$$ 
we find that $\tau_0((X_0^E)_\varepsilon)= \tau_0(Q_\varepsilon)$ where $Q \subset X_0^E$ is the set of connected components of $\cup_{j \in J}V_j$. Hence $ \vert Q\vert = \vert J \vert$. 
Since $\vert J \vert < \vert I \vert= \vert X_0^E\vert$, it follows immediately that the map $\tau_0$ is not pre-injective. 
\par
As the alphabet $X_0$ is finite and the group $G$ is amenable, we deduce from the classical Garden of Eden theorem that $\tau_0$ is not surjective. In particular, we deduce that $\tau$ is not surjective. 
Hence, we also arrive at a contradiction in this case. 
\par 
Therefore, we can conclude that $\tau$ must be $(\sbullet[.9])$-pre-injective and the proof of the theorem is complete.  
\end{proof}

\section{Reversibility in $CA_{algr}(G,X,K)$} 
\label{s:reversible}

We have seen in Theorem~\ref{t:post-surjective-implies-surjective} that post-surjectivity implies surjectivity in the classes $CA_{alg}$ and $CA_{algr}$. On the other hand, pre-injectivty is weaker than injectivity. 
As shown by Capobianco, Kari, and Taati in \cite[Theorem~1]{kari-post-surjective}, such trade-off between injectivity and surjectivity preserves bijectivity for cellular automata with finite alphabet: 

\begin{theorem}[Capobianco-Kari-Taati] 
Let $G$ be a group and let $A$ be a finite set. Then every pre-injective, post-surjective cellular automaton $\tau \colon A^G \to A^G$ is reversible. 
\end{theorem}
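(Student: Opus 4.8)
The plan is to establish reversibility by first using post-surjectivity to produce a candidate inverse map defined everywhere, then using pre-injectivity to show this candidate is a well-defined function, and finally checking that the resulting map is itself a cellular automaton (equivalently, continuous and equivariant).

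**Step 1: Surjectivity.** First I would invoke the fact—already noted in the excerpt via Theorem~\ref{t:post-surjective-implies-surjective} (with $\Sigma = A^G$, which is trivially a strongly irreducible algebraic sofic subshift when $A$ is finite, and more classically just by a compactness argument over finite alphabets)—that a post-surjective cellular automaton over a finite alphabet is surjective. Actually, for finite alphabets this is elementary: the image $\tau(A^G)$ is closed in the prodiscrete (compact) topology, and post-surjectivity together with the density argument shows it is dense, hence everything. So $\tau$ is onto.

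**Step 2: Bijectivity.** Next I would show $\tau$ is injective, given that it is pre-injective and surjective. Here the classical Garden of Eden theorem over amenable groups does \emph{not} apply (the group $G$ is arbitrary), so the argument must be different and genuinely uses post-surjectivity. The idea: suppose $\tau(x) = \tau(y)$ with $x \neq y$, and pick $g$ with $x(g) \neq y(g)$. Using post-surjectivity, one builds, for each finite $E \ni g$, a configuration agreeing with $x$ off $E$ and mapping to a chosen target; by a diagonal/compactness argument over an exhaustion of $G$ by finite sets one produces two configurations that are asymptotic, distinct, and have the same image—contradicting pre-injectivity. The mechanism mirrors the proof that post-surjective implies "uniformly post-surjective" (Lemma~\ref{l:uniform-psot-surjectivity} style): post-surjectivity lets us correct $\tau(x)$ to $\tau(y)$ by a local perturbation of $x$ supported in a bounded region, and the bound is uniform; composing this with the hypothesis $\tau(x) = \tau(y)$ forces $x$ and $y$ to differ only on a bounded set, and then pre-injectivity kills the difference.

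**Step 3: The inverse is a cellular automaton.** Once $\tau$ is bijective, the set-theoretic inverse $\tau^{-1} \colon A^G \to A^G$ exists. It is automatically $G$-equivariant since $\tau$ is. To conclude $\tau^{-1}$ is a cellular automaton, by the Curtis–Hedlund–Lyndon theorem it suffices to show $\tau^{-1}$ is continuous for the prodiscrete topology; since $A^G$ is compact and $\tau$ is a continuous bijection, $\tau$ is automatically a homeomorphism, so $\tau^{-1}$ is continuous. Hence $\tau^{-1}$ is a cellular automaton, i.e., $\tau$ is reversible.

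**The main obstacle** is Step~2: extracting injectivity from the combination of pre-injectivity and post-surjectivity without amenability. The key technical input is a \emph{uniform} version of post-surjectivity — that the correcting perturbation can be chosen with support in a fixed finite set $E$ independent of the configurations — which over a finite alphabet follows from a straightforward compactness (König's lemma) argument rather than the algebraic-geometry input of Lemma~\ref{l:uniform-psot-surjectivity}. With that uniform bound in hand, if $\tau(x) = \tau(y)$ then reversing the roles of $x,y$ and iterating shows that $x$ and $y$ must actually be asymptotic, and then pre-injectivity gives $x = y$. The remaining steps are formal consequences of compactness of $A^G$ and the Curtis–Hedlund–Lyndon characterization.
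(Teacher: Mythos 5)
Your Steps 1 and 3 are fine and standard (closed image plus density gives surjectivity; a continuous equivariant bijection of the compact space $A^G$ is a homeomorphism, so Curtis--Hedlund--Lyndon makes $\tau^{-1}$ a cellular automaton), and your reduction of uniform post-surjectivity to a compactness argument over a finite alphabet is also correct. Note that the paper does not reprove this statement; it simply quotes \cite[Theorem~1]{kari-post-surjective}, so your argument has to stand on its own -- and Step 2, the injectivity step, has a genuine gap. The assertion that uniform post-surjectivity ``composed with the hypothesis $\tau(x)=\tau(y)$ forces $x$ and $y$ to differ only on a bounded set'' is a non sequitur: post-surjectivity (uniform or not) only produces \emph{some} preimage of $\tau(y)$ asymptotic to $x$, and since $\tau(x)=\tau(y)$ that preimage may simply be $x$ itself; it places no constraint whatsoever on $y$. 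That all elements of a fibre lie in one asymptoticity class is essentially the injectivity you are trying to prove, and ``reversing the roles of $x,y$ and iterating'' does not supply a mechanism for it.

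What is actually needed is a surgery/interpolation argument, which is how Capobianco--Kari--Taati argue. Pick $g$ with $x(g)\neq y(g)$, let $F$ be a large ball around $g$, and define the hybrid $w\in A^G$ by $w=y$ on $F$ and $w=x$ off $F$. Then $\tau(w)$ differs from $\tau(x)=\tau(y)$ only in a collar around the boundary of $F$ (of width given by the memory set), so by the \emph{uniform} post-surjectivity bound (the finite set $E$ of Lemma~\ref{l:uniform-psot-surjectivity}, applied cell by cell as in the corresponding statement of \cite{kari-post-surjective}) there is $x'$ with $\tau(x')=\tau(x)$ and with $x'$ agreeing with $w$ outside that collar thickened by $E$; in particular, if $F$ is chosen large enough, $x'(g)=w(g)=y(g)$. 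Since $x'$ differs from $x$ only inside the finite set $F$ together with the collar, $x'$ is asymptotic to $x$, and pre-injectivity forces $x'=x$, whence $x(g)=y(g)$, a contradiction. The uniform bound is used precisely to keep the correction away from the disagreement site $g$; this hybridization step is the missing idea in your proposal, and without it the combination ``pre-injective $+$ post-surjective $\Rightarrow$ injective'' is not established.
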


\par 
It turns out that the same property holds for the class $CA_{algr}$ at least in characterisitc zero. Moreover, we can show that the inverse is also an algebraic group cellular automaton. 

\begin{theorem}
\label{t:reversible}
Let $G$ be a group and let $X$ be an algebraic group over an algebraically closed field $K$ of characteristic zero. Then every post-surjective, pre-injective $\tau \in CA_{algr}(G,X,K)$ is reversible and $\tau^{-1} \in CA_{algr}(G,X,K)$. 
\end{theorem}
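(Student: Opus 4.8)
The plan is to adapt the Capobianco--Kari--Taati strategy for finite alphabets, using Noetherianity in place of compactness and the characteristic zero hypothesis through the smoothness of algebraic groups. First, $\tau$ is surjective by Theorem~\ref{t:post-surjective-implies-surjective} and the remark following it, applied to the strongly irreducible algebraic group subshift $\Sigma=A^G$; note also that $\tau(e^G)=e^G$, and since $\tau$ is a homomorphism, pre-injectivity says exactly that $\Ker\tau$ has no nontrivial finitely supported configuration.

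Next I would produce a right inverse of finite propagation, following the mechanism of Lemma~\ref{l:uniform-psot-surjectivity}. Fix a symmetric memory set $M\ni 1_G$ of $\tau$ with local map a homomorphism $f\colon X^M\to X$, exhaust $G$ by finite subsets $E_n\ni 1_G$, and set $V_n:=\{x\in A^G:\supp(x)\subseteq E_n,\ \tau(x)|_{G\setminus\{1_G\}}=e\}$, an algebraic subgroup of $A^{E_n}$, together with $Z_n:=\{\tau(x)(1_G):x\in V_n\}$, the image of a homomorphism of algebraic groups and hence a \emph{closed} subgroup of $A$. Post-surjectivity applied to the configuration equal to a prescribed $a\in A$ at $1_G$ and to $e$ elsewhere gives $\bigcup_n Z_n=A$; since $(Z_n)$ is an increasing chain of closed subgroups, Noetherianity yields $Z_N=A$ for some $N$ (this is why the uncountability hypothesis of Lemma~\ref{l:uniform-psot-surjectivity} can be dispensed with here). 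Thus $\tau|_{V_N}\colon V_N\to A$, $x\mapsto\tau(x)(1_G)$, is a surjective homomorphism of algebraic groups whose kernel, sitting inside $\Ker\tau$ and consisting of finitely supported configurations, has trivial $K$-points by pre-injectivity; as $\car K=0$ this kernel is a trivial group scheme (Cartier), so $\tau|_{V_N}$ is a closed immersion, hence an isomorphism $V_N\cong A$. Writing $a\mapsto x_a$ for its inverse (a homomorphism with $\supp(x_a)\subseteq E_N$ and $\tau(x_a)$ equal to $a$ at $1_G$ and $e$ elsewhere), define $\sigma\colon A^G\to A^G$ coordinatewise by $\sigma(y)(h):=\prod_{g\in hE_N^{-1}}(g\cdot x_{y(g)})(h)$, the finite product taken in a fixed well-order of $G$. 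Then $\sigma$ has memory $E_N^{-1}$, $\sigma(e^G)=e^G$, and $\tau\circ\sigma=\mathrm{id}$: apply the homomorphism $\tau$ termwise to the finite partial products $\prod_{g\in F}g\cdot x_{y(g)}$, observe that the configurations $\tau(g\cdot x_{y(g)})$ are supported at the single points $g$ and so commute, and pass to the prodiscrete limit over $F\uparrow G$ using continuity of $\tau$.

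The heart of the proof is the injectivity of $\tau$. Suppose $z\in\Ker\tau$ with $z\ne e^G$; by pre-injectivity $\supp(z)$ is infinite. Choose $g_0\in\supp(z)$ and a finite $F\supseteq g_0E_N^{-1}M$, and set $z':=z|_F$, extended by $e$; then $z'$ is finitely supported, hence so is $w:=\sigma\tau(z')$ (a map of memory $E_N^{-1}M$ fixing $e^G$), and $\tau(w)=\tau\sigma\tau(z')=\tau(z')$. Since $z'$ agrees with $z$ on $g_0E_N^{-1}M$ and $\sigma\tau(z)=\sigma(e^G)=e^G$, one gets $w(g_0)=e\ne z(g_0)=z'(g_0)$, so $w\ne z'$; but $z'w^{-1}$ is finitely supported and lies in $\Ker\tau$, so pre-injectivity forces $z'=w$, a contradiction. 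Hence $\Ker\tau=\{e^G\}$, $\tau$ is bijective, and $\tau^{-1}=\sigma$ (a right inverse of an injective map). Finally $\tau^{-1}=\sigma$ is $G$-equivariant with finite memory $E_N^{-1}$, so it is a cellular automaton; its local map $h\colon X^{E_N^{-1}}\to X$ is a group homomorphism on $K$-points because $\tau^{-1}$ is, and $h\circ f^{+}=\mathrm{pr}_{1_G}$ where the induced homomorphism $f^{+}\colon X^{E_N^{-1}M}\to X^{E_N^{-1}}$ is surjective (as $\tau$ is) and hence, since $\car K=0$, faithfully flat; as $\tau^{-1}$ has memory $E_N^{-1}$, the map $\mathrm{pr}_{1_G}$ is constant on the fibres of $f^{+}$, so by faithfully flat descent $h$ is a morphism of varieties, hence a homomorphism of algebraic groups, and $\tau^{-1}\in CA_{algr}(G,X,K)$.

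I expect the main obstacle to be the injectivity step together with producing $\sigma$ honestly: post-surjectivity and pre-injectivity are individually weak, and the argument must ``relocate'' a putative infinitely supported element of $\Ker\tau$ into a finitely supported one, which pre-injectivity forbids. The characteristic zero hypothesis is essential twice over --- to upgrade the set-theoretic triviality of the relevant kernel to scheme-theoretic triviality (Cartier) and to make $f^{+}$ faithfully flat --- and this is also the point where the proof genuinely cannot be pushed to positive characteristic.
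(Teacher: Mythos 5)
The one genuine gap is the sentence ``since $(Z_n)$ is an increasing chain of closed subgroups, Noetherianity yields $Z_N=A$'', which is precisely where you propose to discard the uncountability hypothesis of Lemma~\ref{l:uniform-psot-surjectivity}. Noetherianity gives a \emph{descending} chain condition on closed subsets; ascending chains of closed algebraic subgroups need not stabilize (e.g.\ $\mu_{n!}\subset\mu_{(n+1)!}$ in $\mathbb{G}_m$), and even the weaker statement you actually need --- that $\bigcup_n Z_n=A$ forces $Z_N=A$ for some $N$ --- is false in positive characteristic: over $K=\overline{\mathbb{F}_p}$ the subgroups $\mathbb{F}_{p^{n!}}^{\,*}\subset\mathbb{G}_m(K)$ form a strictly increasing chain of finite closed subgroups whose union is all of $\mathbb{G}_m(K)$. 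So no purely Noetherian mechanism can be at work. The paper obtains the finite bound from \cite[Lemma~B.2]{cscp-alg-ca}, a countable-intersection property of constructible sets that genuinely uses the uncountability of $K$ (this is why Lemma~\ref{l:uniform-psot-surjectivity} carries that hypothesis). If you insist on arbitrary algebraically closed $K$ of characteristic zero, you must argue separately: after $\dim Z_n$ and then the identity components stabilize to a common subgroup $H$ (normal, since it is normalized by $\bigcup_n Z_n=A(K)$), the quotient $A/H$ would be a positive-dimensional group whose $K$-points are an increasing union of finite subgroups, hence torsion; excluding this is easy for linear pieces ($\mathbb{G}_a$, $\mathbb{G}_m$ have elements of infinite order in characteristic zero) but for abelian-variety quotients over $\overline{\Q}$ it needs the nontrivial fact that such groups have non-torsion points. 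Either supply such an argument or keep the uncountability hypothesis; as written, the step is unjustified.

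Apart from this, your route is correct but genuinely different from the paper's, which is a two-line reduction: Lemma~\ref{l:uniform-psot-surjectivity} is substituted for \cite[Lemma~1]{kari-post-surjective}, the construction of \cite[Theorem~1]{kari-post-surjective} is invoked verbatim to get reversibility, and \cite[Proposition~6.2]{phung-2020} (this is where characteristic zero enters for the paper) gives $\tau^{-1}\in CA_{algr}$. You instead build a homomorphic section $A\cong V_N$ of the single-cell correction map (using pre-injectivity plus Cartier), write down an explicit right inverse $\sigma$, prove injectivity by the relocation argument, and get algebraicity of the local map of $\tau^{-1}$ by descent along $f^{+}$; all of this is sound and more self-contained, with two small repairs needed: a product ordered by a fixed well-order of $G$ does not make $\sigma$ $G$-equivariant (fix an order on $E_N^{-1}$ instead, or note, as you implicitly do, that equivariance is recovered once $\sigma=\tau^{-1}$), and in the descent step the constancy of $\mathrm{pr}_{1_G}$ on the fibres of $f^{+}$ must be upgraded from set-theoretic to scheme-theoretic, which holds because in characteristic zero $f^{+}$ is a smooth surjective homomorphism, so the fibre product is reduced --- faithful flatness alone would also hold for Frobenius and is not the point.
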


\begin{proof}
Suppose that $\tau \in CA_{algr}(G,X,K)$ is post-surjective and pre-injective. Let $A=X(K)$. 
Using Lemma~\ref{l:uniform-psot-surjectivity} instead of \cite[Lemma~1]{kari-post-surjective},  we have a similar result as stated in  \cite[Corollary~2]{kari-post-surjective} for the class $CA_{algr}$. 
Thus, the exact same construction given in  \cite[Theorem~1]{kari-post-surjective} shows that $\tau$ is reversible, i.e., there exists a cellular automaton $\sigma \colon A^G \to A^G$ such that $\sigma \circ \tau=\tau\circ \sigma= \Id$. In particular, $\tau$ is  bijective. 
\par 
Therefore, we can apply directly  \cite[Proposition~6.2]{phung-2020} to see that for some memory set $M\subset G$, the cellular automaton $\sigma$ admits a local defining map $A^M \to A$ which is a homomorphism of algebraic groups. It follows that $\sigma \in CA_{algr}(G,X,K)$ and the proof is complete. 
\end{proof}

\bibliographystyle{siam}

\end{document}